\renewcommand{\color}[2][]{}
\newcommand{\eps}{\varepsilon}
\newcommand \commentout[1] {}
\newcommand{\R}{\mathbb{R}}
\newcommand {\Chi} {{\bf \raise 2pt \hbox{$\chi$}} }
\newcommand {\Div}  { {\rm div} }
\newcommand {\f}   {\frac}
\newcommand {\p}   {\partial}
\newcommand*{\dd}{\mathop{\kern0pt\mathrm{d}}\!{}}
\newcommand*{\DD}{\mathop{\kern0pt\mathrm{D}}\!{}}
\DeclareMathOperator*{\argmin}{argmin}
\theoremstyle{plain}
\newtheorem*{thm*}{Theorem}
\newtheorem{thm}{Theorem}[section]
\newtheorem{lemma}[thm]{Lemma}
\newtheorem{proposition}[thm]{Proposition}
\newtheorem{corollary}[thm]{Corollary}
\theoremstyle{remark}
\newtheorem{remark}[thm]{\bf Remark}
\newtheorem{definition}[thm]{\bf Definition}
\newcommand{\beq}{\begin{equation}}
\newcommand{\eeq}{\end{equation}}
\newcommand{\bea} {\begin{array}{rl}}
\newcommand{\eea} {\end{array}}
\newcommand{\bepa}{\left\{ \begin{array}{l}}
\newcommand{\eepa} {\end{array}\right.}
\newcommand{\diff}{\mathop{}\!\mathrm{d}}
\numberwithin{equation}{section}
\title{Sobolev estimates for the Keller-Segel system and applications to the JKO scheme}
\author{Charles Elbar\thanks{Université Claude Bernard Lyon 1, ICJ UMR5208, CNRS, Ecole Centrale de Lyon, INSA Lyon, Université Jean Monnet, 69622
Villeurbanne, France. Email: elbar@math.univ-lyon1.fr}
}
\date{}
\begin{document}

\maketitle

\begin{abstract}
We prove $L^{\infty}_{t}W^{1,p}_{x}$ Sobolev estimates in the Keller-Segel system {\color{blue} with linear diffusion in any dimension} by proving a functional inequality, inspired by the Brezis-Gallouët-Wainger inequality. These estimates are also valid at the discrete level in the Jordan-Kinderlehrer-Otto (JKO) scheme. By coupling this result with the diffusion properties of a functional according to Bakry-Emery theory, we deduce the $L^2_t H^{2}_{x}$ convergence of the scheme, thereby extending the recent result of Santambrogio and Toshpulatov in the context of the Fokker-Planck equation to the Keller-Segel system.

\end{abstract}
\vskip .7cm

\noindent{\makebox[1in]\hrulefill}\newline
2010 \textit{Mathematics Subject Classification.}	35A35, 35D30, 35K55, 	35Q92, 49Q22.
\newline\textit{Keywords and phrases.} JKO scheme, Keller-Segel, Aggregation-Diffusion. 

\section{Introduction}

We are concerned with estimates and convergence of the Jordan-Kinderlehrer-Otto (JKO) scheme for the Keller-Segel (KS) system
\begin{equation}\label{eq:KS}
\partial_t \rho - \Delta \rho + \Div(\rho\nabla u[\rho]) = 0, \quad -\Delta u[\rho] =\rho.
\end{equation}

The system is set on the domain $(0,T)\times\Omega$, where $T > 0$ and $\Omega\subset \R^d$ is a smooth, convex, bounded open set. The boundary of $\Omega$ is denoted by $\partial\Omega$, with outward unit normal vector $\vec{n}$. It is complemented by a nonnegative initial condition $\rho(0,\cdot) = \rho_0$ which is assumed to be smooth enough. The equation is supplemented with the no flux and Dirichlet boundary conditions:

$$
{\color{blue}(\nabla\rho-\rho \nabla u[\rho])\cdot\vec{n} = u[\rho] = 0} \quad \text{on } \partial \Omega.
$$

Our objective is to prove that {\color{blue}in a subcritical case defined by the initial data, the solutions of the Keller-Segel exist globally}, and that they are bounded in $L^{\infty}(0,T; W^{1,p}(\Omega))$ for all $1 \leq p < \infty$ by a constant depending only on $T$ and the initial condition $\rho_{0}$, and that these estimates are also available in the JKO scheme. Finally we prove that the curve defined by the scheme converges strongly in the space $L^2(0,T; H^{2}(\Omega))$. \\

\textbf{Chemotaxis and the Keller-Segel system.}  Chemotaxis refers to the ability of cells to detect chemical signals in their environment and change their movement accordingly. It was first identified in bacterias by Engelmann and Pfeffer in the 1880s. This process plays a critical role in many biological applications, including immune responses, wound healing, and microbial behavior.

Positive chemotaxis refers to movement toward a higher concentration of an attractive substance, such as nutrients, while negative chemotaxis describes movement away from harmful substances, like toxins. For example, \textit{Escherichia coli} and more generally bacterias use positive chemotaxis to locate and move toward nutrients~\cite{adler} and negative chemotaxis to move away from repellents~\cite{wai-adler}. 

The theoretical and mathematical study of chemotaxis began with the pioneering work of Patlak in the 1950s~\cite{patlak1953random} and was further developed by Keller and Segel in the 1970s~\cite{keller_initiation_1970,KELLER1971225}. In their work, they derived a system known as the Keller-Segel system which combines diffusion and drift terms that account for nonlocal interactions. The general form of the system is given by:
$$
\p_t \rho - \Div(\phi(\rho,u)\nabla \rho) +\Div( \psi(\rho,u)\nabla u) = f(\rho,u),\quad \tau\p_t u = \Delta u + g(\rho,u) - h(\rho,u), 
$$
where $\rho$ denotes the local density of cells, and $u$ represents the concentration of chemical signals. The term $\Div(\phi(\rho,u)\nabla \rho)$ models diffusion, which may be nonlinear, while $\Div(\psi(\rho,u)\nabla u)$ describes the drift due to nonlocal interactions, such as attraction and repulsion between cells. 

In this paper, we focus on a simplified, classical version of the Keller-Segel (KS) system, where we assume $\phi(\rho,u) = 1$ (linear diffusion), $\psi(\rho,u) = \rho$, $f(\rho,u) = h(\rho,u) = 0$, $\tau = 0$, and $g(\rho,u) = \rho$. This results in the well-known elliptic form of the KS system. For the reader interested in the analytic properties of more general cases of the Keller-Segel system, we refer to the reviews~\cite{Arumugam2020KellerSegelCM, doi:10.1142/S021820251550044X,hillen2009user,horstmann2004keller}.

An important phenomenon in the Keller-Segel system is the possibility for solutions to blow-up in finite time. This happens due to the competition between aggregation, that is the attraction of cell and diffusion. If the aggregative forces are sufficiently strong, they can overcome diffusion, leading to blow-up. In particular, it was shown by Blanchet, Dolbeault and Perthame~\cite{blanchet-keller-2006} in two dimension that if the initial mass of the system is greater than $8\pi$, the solution will blow up in finite time. More precisely, Nagai~\cite{NAGAI2001777} proved that at a blow up point $x_{0}$, there exists a nonnegative integrable function $f$, $\eps>0$ and a constant $m$ such that 
$$
\lim_{t\to T_{max}}\rho(\cdot , t) = m\delta_{x_{0}} + f \quad \text{weak star in $M(\Omega(x_{0},\eps))$}
$$
where $\Omega(x_{0},\eps)= \{x\in \bar{\Omega}, |x-x_0|\le \eps\}$ and $T_{max}$ is the final time of blow up. This reasoning can be extended to other dimensions, see~\cite{NAGAI19973837,perthame2006transport,Carrillo2018AggregationDiffusionED}.  On the other hand, when the initial mass is below this threshold, solutions are known to exist globally. 

In this paper, we {\color{blue} consider a small-data subcritical regime, quantified by the $L^{\f d2}$ norm, preventing blow-up as in~\cite{MR2099126}}. Our first goal is to derive Sobolev estimates in this regime. Our second goal is to construct solutions using the JKO scheme and prove convergence of this scheme in high Sobolev norms.

To construct weak solutions for the Keller-Segel system, a common approach is to begin with a regularized version of the system, where, for instance, the singularity of the kernel is truncated appropriately with a parameter $\eps$ (we recall that on the whole space $u=-W\ast \rho$ where $W(x) = -\f{1}{2\pi} \log |x|$ in two dimensions). Existence of solutions for this regularized system, which can be seen as an aggregation diffusion system with a smooth kernel, can be obtained using fixed-point methods. It remains to remove the regularization parameter, that is to pass to the limit $\eps \to 0$ using compactness methods. An alternative strategy to prove existence is to observe that the system can be viewed as a gradient flow with respect to the Wasserstein metric~\cite{blanchet2013gradient}, and use the JKO scheme to construct solutions.
 \\

\textbf{The JKO scheme.} In 1998, Jordan, Kinderlehrer and Otto~\cite{MR1617171} constructed solutions to the Fokker-Planck equation
$$
\p_t \rho - \Delta\rho + \Div(\rho\nabla V) = 0, \quad \text{$V(x)$ given and smooth,}
$$
with a new scheme. They observed that the equation is the gradient flow of the functional
$$
F(\rho) = \int_{\Omega}  \rho\log \rho - \rho V \diff x,
$$

in the Wasserstein space. Indeed, the equation can be rewritten 
$$
\p_t \rho - \Div\left(\rho \nabla \f{\delta F}{\delta \rho}[\rho]\right)=0, 
$$

where $\f{\delta F}{\delta \rho}[\rho]=\log \rho + V$ is the first variation of $F$.  This interpretation also naturally leads to a time discretization.  For a fixed time step \(\tau > 0\), one can define a sequence $\{\rho_\tau^n\}_{n}$ iteratively through $\rho_{\tau}^{0}= \rho_{0}$ and

\[
\rho_\tau^{n+1} \in \arg\min_{\rho} \left\{ F(\rho) + \frac{W_2^2(\rho, \rho_\tau^n)}{2\tau} \right\}.
\]

This formulation is reminiscent of the implicit Euler scheme for Euclidean gradient flows. This sequence can then be used to define the constant interpolation curve \( t \mapsto \rho_\tau(t) \) in the space of probability measures, with \(\rho_\tau(0) = \rho_0\) and 

\[
\rho_\tau(t) = \rho_\tau^{n+1} \text{ for } t \in (n\tau, (n+1)\tau].
\]
In their work, they showed that the constructed curve $\rho_{\tau}$ converges strongly in $L^{1}_{t,x}$ and weakly in $L^{1}_{x}$ for all time, to a weak solution of the Fokker-Planck equation. Since then, there has been extensions of this scheme to many equations, including  Keller-Segel~\cite{carrillo2018,blanchet2013gradient,blanchet2008convergence, dimarino,kim2023density}, nonlocal equations~\cite{burger2023porous,carrillo2024nonlocal,di2018nonlinear}, Cahn-Hilliard~\cite{antonio2024competing,lisini2012cahn,kroemer2022hele,carrillo2023degenerate} etc.  

The convergence result of the JKO scheme for the Fokker-Planck equation was then improved by Santambrogio and Toshpulatov in~\cite{toshpulatov}. Although the rate of convergence remains the same, their results proves convergence in stronger norms, specifically in $L^{2}_{t}H^{2}_{x}$. The core of their analysis relies on the five-gradients inequality, initially introduced in~\cite{de2016bv} to obtain $BV$ estimates for some variational problems arising in optimal transport. This inequality has been generalized to other costs~\cite{CRMATH_2023__361_G3_715_0} and on differentiable manifolds~\cite{DIMARINO2024294}. Below, we briefly outline the key ideas of the argument from~\cite{toshpulatov}, as we follow a similar strategy in our context.

At the continuous level, let 
\[
\mathcal{F}_{2}[\rho] = \int_{\Omega} \rho|\nabla (\log \rho + V)|^2 \diff x
\]
the Fisher information functional (appearing in the Bakry-Emery theory).
It can be observed that if $\rho$ solves the Fokker-Planck equation the following relation holds:
\[
\frac{d}{dt} \mathcal{F}_{2}[\rho] + \int_{\Omega} \rho |D^{2}\log\rho+V|^2 \diff x= \text{l.o.t.},
\]
where l.o.t. denotes lower-order terms.

At the discrete level, using the five-gradients inequality in the JKO scheme, a similar estimate can be obtained but up to an error term. That is 
$$
\mathcal{F}_{2}[\rho_{\tau}(0)]\ge \mathcal{F}_{2}[\rho_{\tau}[T]] + \int_{0}^{T}\int_{\Omega} \rho_{\tau} |D^{2}\log\rho_{\tau}+V|^2 \diff x \diff t + \text{l.o.t.} + \eps(\tau).
$$

 Since it can be shown that $\rho_{\tau}$ converges strongly in $L^{\infty}_{t}H^1_{x}$ to $\rho$, it follows that:
\[
\limsup_{\tau\to 0}\int_{0}^{T}\int_{\Omega} \rho_{\tau} |D^{2}\log\rho_{\tau}+V|^2 \diff x \diff t  \le \int_{0}^{T}\int_{\Omega} \rho |D^{2}\log\rho+V|^2\diff x \diff t .
\]
Combining this estimate $L^{2}_{t}H^{2}_x$ estimates on $\rho_{\tau}$ and with the lower semi-continuity of the weak convergence, we deduce

\[
\int_{0}^{T}\int_{\Omega} \rho_{\tau} |D^{2}\log\rho_{\tau}+V|^2 \diff x \diff t \to \int_{0}^{T}\int_{\Omega} \rho |D^{2}\log\rho+V|^2 \diff x \diff t .
\]

Finally assuming a positive initial condition, and proving that a bound from below is preserved in time we conclude $\rho_{\tau} \to \rho$ strongly in $L^{2}_t H^{2}_x$. Indeed, the weak convergence and the convergence of the norms imply the strong convergence by uniform convexity.

The reasoning holds for $V$ a given function of $x$ assumed to be smooth enough. A natural question arises: Can we obtain the same result when replacing the given smooth potential $V$ with $W \ast \rho$, where $W$ is a general kernel? In this case, we are dealing with an aggregation-diffusion equation, which introduces additional technical challenges. For instance, the equation is now nonlinear and if $V = W \ast \rho$, the potential $V$ now depends on time. However, if $W$ remains smooth, the result should still hold.  In this paper, we consider a particular $W$, which is the one given by the Keller-Segel equation: $W$ is defined implicitly by the equation $-\Delta_{x} W(x,y) = \delta_{y}(x)$ and $W(x,y)=0$ for $x\in\p\Omega, \, y\in\Omega$. This kernel is not smooth as it has a singularity on the diagonal. Other non smooth kernel could be considered~\cite{2024arXiv240609227C}. A careful inspection of the proof in the Fokker-Planck case, shows that the same idea does not apply. Three difficulties need indeed to be overcome:

\begin{itemize}{\color{blue}
    \item \textbf{Difficulty 1: Existence and $L^{\infty}$ estimates in the JKO scheme in high dimensions.} In dimension $d\ge 3$, some difficulties arise already at the existence of the JKO scheme, because the negative part of the free energy cannot be controled by its positive part. Also, to obtain $L^{\infty}$ estimates on the solution, we need to adapt the Alikakos iteration method, but at the discrete level of the JKO scheme. }
    
    \item \textbf{Difficulty 2: Lower bound for $\rho_{\tau}$.} The second difficulty is to find a uniform lower bound for the curve $\rho_{\tau}$ away from zero. This estimate is crucial to deduce strong convergence in the $L^2_t H^2_x$ norm, with the dissipation of the functional $\mathcal{F}_2$, as detailed in the strategy above. For the Fokker-Planck equation, starting with a positive initial condition, such a lower bound can be obtained with a maximum principle, using the fact that $\|D^2 V\|_{L^{\infty}} \leq C$. Indeed, this maximum principle can also be applied at the discrete level of the JKO scheme.{\color{blue} In the Keller-Segel system we a priori don't know that $D^2 u$ is bounded in $L^{\infty}$.}
    
    \item \textbf{Difficulty 3: Uniform $C^{0,\alpha}(\bar{\Omega})$ estimates in the JKO scheme.} The third difficulty is to prove that the curves in the JKO scheme are uniformly bounded in $C^{0,\alpha}(\bar{\Omega})$ independently of the discretization parameter $\tau$. By interpolation, we can deduce that the first and second derivatives of the Kantorovich potential converge to zero as $\tau \to 0$ with a rate (see Lemma~\ref{lem:control_phi}). This convergence allows us to relate the dissipation of the functional $\mathcal{F}_2$ at the continuous level and at the level of the JKO scheme.
\end{itemize}

In the case of the Fokker-Planck equation, the third difficulty has been overcome in~\cite{toshpulatov} by using the $L^{\infty}_{t}W^{1,p}_{x}$ estimates for $p > d$ proved in~\cite{dimarino}. This implies the Hölder estimates by the embedding $W^{1,p}\hookrightarrow C^{0,\alpha}$. We also mention that in~\cite{caillet2024fisher}, the authors show that all the modulus of continuity can be controlled. The key ingredient to obtain the $W^{1,p}$ estimates is the functional
\begin{equation}\label{eq:FP}
    \mathcal{F}_{p}[\rho] = \int_{\Omega}\rho \left|\nabla \left(\log \rho - u[\rho]\right)\right|^p \,\diff x,
\end{equation}
which generalizes the functional $\mathcal{F}_{2}$ to other exponents. Here $u[\rho] = -V$ in the Fokker-Planck case and $u[\rho]$ satisfies the Poisson equation $-\Delta u[\rho] = \rho$ with Dirichlet boundary conditions in the Keller-Segel case.

More precisely, in~\cite{dimarino}, the authors proved that for the  Fokker-Planck equation, the $L^{\infty}_t W^{1,p}_{x}$ estimates hold for all $1 \leq p < +\infty$. However, for the Keller-Segel system in dimension 2, the estimates were only proved for $1 \leq p < 2$. Unfortunately, the $1\le p<2$ estimates are insufficient to deal with the two difficulties described above. Here we show that by using the same functional $\mathcal{F}_p$, as well as Theorem~\ref{thm:functional_inequality}, we can extend these estimates to cover all $1 \leq p < +\infty$. Then this additional regularity allows to prove the convergence of the scheme in the $L^2_t H^2_x$ norm by coming back to the functional $\mathcal{F}_2$ and applying the strategy described above from~\cite{toshpulatov}.

\iffalse To summarize, we mention that estimates of the JKO scheme in the context of the Keller-Segel model were studied in~\cite{dimarino}, where the authors derived $L^{\infty}_{t}W^{1,p}_{x}$ estimates for the JKO scheme with $1 \leq p < 2$. However, the $L^{2}_{t}H^{2}_{x}$ convergence, as shown in~\cite{toshpulatov} for the Fokker-Planck equation, remained an open problem. In this paper, we address this gap and first improve the estimates, by proving  $L^{\infty}_{t}W^{1,p}_{x}$ estimates for all $1 \leq p < +\infty$ in the JKO scheme. Then we establish $L^{2}_{t} H^{2}_{x}$ strong convergence of the scheme.
\fi

\subsection{Main results}

We present a functional inequality defined on a general smooth, bounded domain $\Omega$.

\begin{thm}[Functional Inequality]\label{thm:functional_inequality} Let $0<\alpha<1$. Then there exits a constant $C= C(d,\alpha, {\color{blue}\Omega})$  such that
\begin{equation}
    \|D^{2}u\|_{L^{\infty}} \leq C \|\Delta u \|_{L^{\infty}}\left(1+ \log\left(\frac{\|\Delta u\|_{C^{0,\alpha}}}{\|\Delta u \|_{L^{\infty}}}\right)\right).
\end{equation}
for all $u\in C^{2,\alpha}(\bar{\Omega})$ with $u=0$ on $\p\Omega$.
\end{thm}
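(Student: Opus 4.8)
The plan is to read the inequality as a Calderón--Zygmund estimate with a logarithmic loss, in the spirit of Brezis--Gallouët--Wainger: the second-derivative kernel is singular of exactly borderline (log-divergent) order, so the Hölder regularity of $\Delta u$ tames its near-diagonal part while the $L^\infty$ bound controls the rest, and balancing the two produces the logarithm. Writing $f:=\Delta u$ and fixing $x_0\in\bar\Omega$, I would start from the Dirichlet Green's representation $u(x)=-\int_\Omega G(x,y)f(y)\diff y$, where $G$ is the Green's function of the Laplacian on $\Omega$. Differentiating twice (with the customary principal-value interpretation forced by the non-integrable kernel) gives
\[
D^2 u(x_0)=-\int_\Omega D^2_xG(x_0,y)\,\big(f(y)-f(x_0)\big)\diff y - f(x_0)\,\mathcal{C}(x_0),
\]
where the kernel obeys the sharp bound $|D^2_xG(x_0,y)|\le C|x_0-y|^{-d}$ on the smooth domain $\Omega$, and $\mathcal{C}(x_0)$ is a bounded correction term (the analogue of the annulus-cancellation constant for the free fundamental solution), so that $|f(x_0)\mathcal{C}(x_0)|\le C\|f\|_{L^\infty}$.

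For the remaining integral I would split at a radius $R\in(0,\mathrm{diam}(\Omega)]$. On the near region $\{|x_0-y|<R\}$ I use the Hölder modulus $|f(y)-f(x_0)|\le[f]_{C^{0,\alpha}}|x_0-y|^\alpha$ against $|x_0-y|^{-d}$, which integrates to $C\alpha^{-1}[f]_{C^{0,\alpha}}R^{\alpha}$. On the far region $\{|x_0-y|\ge R\}$ I deliberately discard the Hölder gain and use only $|f(y)-f(x_0)|\le 2\|f\|_{L^\infty}$, so that $\int_{R\le|x_0-y|\le \mathrm{diam}(\Omega)}|x_0-y|^{-d}\diff y\le C\log(\mathrm{diam}(\Omega)/R)$ yields a contribution $C\|f\|_{L^\infty}\log(\mathrm{diam}(\Omega)/R)$. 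This asymmetric treatment is the crux of the trick: it keeps the coefficient of the dominant term equal to $\|f\|_{L^\infty}$ at the cost of a logarithm. Choosing $R=\min\{(\|f\|_{L^\infty}/[f]_{C^{0,\alpha}})^{1/\alpha},\ \mathrm{diam}(\Omega)\}$ equalizes the near term with $\|f\|_{L^\infty}$ and turns the far logarithm into $\alpha^{-1}\|f\|_{L^\infty}\log([f]_{C^{0,\alpha}}/\|f\|_{L^\infty})$; recombining with $\|f\|_{C^{0,\alpha}}=\|f\|_{L^\infty}+[f]_{C^{0,\alpha}}$ produces exactly $C\|f\|_{L^\infty}\big(1+\log(\|f\|_{C^{0,\alpha}}/\|f\|_{L^\infty})\big)$.

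The step I expect to be the main obstacle is the boundary. Everything above is routine for an interior point once the kernel bound and the boundedness of $\mathcal{C}(x_0)$ are in hand, but these two facts must be established uniformly as $x_0\to\partial\Omega$, precisely where $D^2_xG$ is most delicate; this is where the smoothness of $\partial\Omega$ is used, through classical estimates on the harmonic corrector $h$ in $G=\Gamma-h$ (equivalently, one localizes near $\partial\Omega$ and exploits $u=0$ there). If the corrector's second derivatives were not controlled up to the boundary, the far-field logarithm could be polluted by a boundary blow-up and the clean bound would fail. Finally, I note that $\mathrm{diam}(\Omega)$ enters the far-field logarithm; since the inequality is not scale invariant, the constant depends on it as well, and the stated $C=C(d,\alpha)$ should be read for a fixed (or normalized) domain, the diameter being absorbed into $C$.
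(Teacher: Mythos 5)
Your argument is correct in outline but follows a genuinely different route from the paper. The paper never touches the Green's function pointwise: it interpolates $\|g\|_{L^{\infty}}\le C\|g\|_{L^{p}}^{1-\theta}\|g\|_{C^{0,\alpha}}^{\theta}$ with $\theta=d/(\alpha p+d)$, applies it to $g=\partial_{ij}u$ using the Calder\'on--Zygmund bound with its explicit constant $\|D^{2}u\|_{L^{p}}\le Cp\|\Delta u\|_{L^{p}}$ (extracted from the Marcinkiewicz interpolation in Gilbarg--Trudinger) together with the Schauder estimate $\|D^{2}u\|_{C^{0,\alpha}}\le C\|\Delta u\|_{C^{0,\alpha}}$, and then optimizes over $p$; the logarithm emerges from minimizing $\theta\mapsto\log(1/\theta)+(1-\theta)\log a+\theta\log b$. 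You instead split the borderline kernel $|D^{2}_{x}G(x_{0},y)|\lesssim|x_{0}-y|^{-d}$ at a radius $R$ and optimize $R$, which is the classical Brezis--Gallou\"et/log-Lipschitz mechanism. Your route is more transparent about where the logarithm comes from and is essentially self-contained once the kernel bounds are granted, but those are exactly its cost: you need $|D^{2}_{x}G(x,y)|\le C|x-y|^{-d}$ \emph{uniformly up to the boundary} and the boundedness of the diagonal correction $\mathcal{C}(x_{0})$, which for a general smooth domain are nontrivial boundary estimates of roughly the same depth as the Schauder theory the paper cites. (For the correction term, a clean justification you could add: $\mathcal{C}(x_{0})=-D^{2}w(x_{0})$ where $\Delta w=1$ in $\Omega$, $w=0$ on $\partial\Omega$, which is smooth on $\bar{\Omega}$ by elliptic regularity.) The paper's route, by contrast, only invokes textbook global estimates, at the price of having to track how the Calder\'on--Zygmund constant degenerates as $p\to\infty$. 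Your closing remarks are accurate: the constant does depend on $\Omega$ (through the diameter in your far-field logarithm, through the Schauder and Poincar\'e constants in the paper's version), and the boundary behaviour of $D^{2}_{x}G$ is indeed the one point that would need a careful reference or proof to make your argument complete.
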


\begin{remark}
The constant $C(d,\alpha)$ blows up when $\alpha\to 0$.
\end{remark}

\begin{corollary}\label{cor:functional_inequality}
 Let $0<\alpha<1$. Then there exits a constant $C= C(d,\alpha)$  such that
$$
 \|D^{2}u\|_{L^{\infty}} \leq C(1+ \|\Delta u \|_{L^{\infty}} (1+\log^{+}\left(\|\Delta u\|_{C^{0,\alpha}}\right))).
$$
\end{corollary}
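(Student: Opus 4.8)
The plan is to deduce Corollary~\ref{cor:functional_inequality} directly from Theorem~\ref{thm:functional_inequality} by an elementary manipulation of the logarithmic factor, with no new analytic input. Write $a = \|\Delta u\|_{L^{\infty}}$ and $b = \|\Delta u\|_{C^{0,\alpha}}$. First I would dispose of the degenerate case $a = 0$: then $\Delta u \equiv 0$ together with the boundary condition $u = 0$ on $\partial\Omega$ forces $u \equiv 0$ by uniqueness for the Dirichlet problem, so $\|D^{2}u\|_{L^{\infty}} = 0$ and the claimed bound holds trivially. Hence one may assume $a > 0$, and since the Hölder norm controls the sup norm we have $b \geq a > 0$, so that $\log(b/a) \geq 0$ and Theorem~\ref{thm:functional_inequality} applies to give $\|D^{2}u\|_{L^{\infty}} \leq C\, a\,(1 + \log(b/a))$.

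Next I would split the logarithm as $\log(b/a) = \log b - \log a$ and treat the two resulting terms separately. For the numerator, $\log b \leq \log^{+} b$, whence $a\log b \leq a\log^{+} b$ since $a > 0$. For the denominator, the key point is that the apparent blow-up of $-\log a$ as $a \to 0^{+}$ is tamed by the prefactor $a$: the function $a \mapsto -a\log a = a\log(1/a)$ attains its maximum over $a > 0$ at $a = 1/e$, so that $-a\log a \leq 1/e$ uniformly. Combining these two estimates yields
\[
a\,(1 + \log(b/a)) = a + a\log b - a\log a \leq a + a\log^{+} b + \frac{1}{e} \leq 1 + a\,(1 + \log^{+} b),
\]
where in the last step I used $1/e \leq 1$.

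Multiplying through by the constant $C$ from Theorem~\ref{thm:functional_inequality} then gives exactly the asserted inequality $\|D^{2}u\|_{L^{\infty}} \leq C\,(1 + \|\Delta u\|_{L^{\infty}}(1 + \log^{+}\|\Delta u\|_{C^{0,\alpha}}))$, with the same constant. Since the argument is essentially a one-line reduction, there is no genuine obstacle here; the only points requiring care are precisely the small-$a$ regime, where the singular factor $-\log a$ must be absorbed by the vanishing prefactor through the uniform bound $a\log(1/a) \leq 1/e$, and the observation that the Hölder norm dominates the $L^{\infty}$ norm, which guarantees that the logarithm appearing in the theorem is well defined and nonnegative.
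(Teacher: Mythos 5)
Your deduction is correct and is precisely the route the paper intends: Corollary~\ref{cor:functional_inequality} is obtained from Theorem~\ref{thm:functional_inequality} by the elementary estimate $a(1+\log(b/a)) = a + a\log b - a\log a \le 1 + a(1+\log^{+}b)$, using $b \ge a$ and $-a\log a \le 1/e$. The paper states the deduction without any details, so your explicit handling of the degenerate case $a=0$ and of the small-$a$ absorption simply fills in what was left implicit.
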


Here $\log^{+}(x)=\log x$ if $x\ge 1$ and 0 else. This inequality shares some similarities with a result by Kozono and Taniuchi~\cite{kozono2000limiting}. To extend the blow-up criterion of solutions to the Euler equation, they have proved an $L^{\infty}$-estimate of functions in terms of the
BMO norm and the logarithm of a norm of higher derivatives. Their result, which is valid on the whole space, is
\begin{equation}
    \|f\|_{L^{\infty}(\R^d)} \leq C\left(1 + \|f\|_{\text{BMO}(\R^d)}\left(1 + \log^{+}\|f\|_{W^{s,p}(\R^d)}\right)\right), \quad sp > d.
\end{equation}

The Kozono-Taniuchi inequality itself draws inspiration from the Brezis-Gallouët-Wainger inequality~\cite{brezis1979nonlinear,brezis1980note}, which is expressed as:
\begin{equation}
    \|f\|_{L^{\infty}} \leq C\left(1 + \log^{\frac{r-1}{r}}\left(1 + \|f\|_{W^{s,p}}\right)\right), \quad sp > d,
\end{equation}
provided $\|f\|_{W^{k,r}}\le 1$, for $kr=d.$

Our result and these inequalities are an alternative way to bound the $L^{\infty}$ norm instead of using the classical embeddings $\|\cdot\|_{L^{\infty}}\le C\|\cdot\|_{W^{s,p}}$ or $\|\cdot\|_{L^{\infty}}\le C\|\cdot\|_{C^{0,\alpha}}$. Logarithmic control of the higher-order norms is much better in some situations. In particular, it is important in our context, as we expect to recover the regularity of the Keller-Segel system using Gronwall's lemma on the functional $\mathcal{F}_{p}$ defined in~\eqref{eq:FP}. In the resulting ODE, we find a term of the form 
$$
\f{d}{dt}\mathcal{F}_{p}(t)\lesssim f( \mathcal{F}_{p}(t))
$$
where $f$ should not grow too fast to $+\infty$, in order to have global estimates. This is the case of the function $x\mapsto x\log x$, where the logarithmic term appears from the functional inequality. 

The functional $\mathcal{F}_{p}$ controlling the $L^{\infty}_{t}W^{1,p}_{x}$ norms of the solutions, we are able to obtain the Sobolev estimates on the solutions of the Keller-Segel system. First let us define our notion of weak solutions to the Keller-Segel system.
{\color{blue}
\begin{definition}
$\rho$ is called a weak solutution of the Keller-Segel system~\eqref{eq:KS} on $[0,T)$ if:
\begin{itemize}
    \item $\rho\in L^{\infty}(0,T; L^{2}(\Omega))\cap L^{2}(0,T; H^{1}(\Omega))$,
    \item $u\in L^{2}(0,T; H^{1}_{0}(\Omega))$
    \item $(u,v)$ satisfies the equations for all $\varphi\in C^{\infty}((0,T)\times \Omega)$ and $\psi\in C_{c}^{\infty}((0,T)\times\Omega)$:
    \begin{align*}
    &\int_{0}^{T}\int_{\Omega}(\nabla \rho - \rho\nabla u) \cdot \nabla \varphi - \rho \partial_t\varphi \diff x \diff t= \int_{\Omega}\rho_{0}(x) \varphi(x,0)\diff x,\\
    &\int_{0}^{T}\int_{\Omega}\nabla u \cdot \nabla\psi\diff x \diff t =\int_{0}^{T}\int_{\Omega} \rho \psi \diff x \diff t
    \end{align*}
\end{itemize}
\end{definition}
}

\begin{thm}[Sobolev estimates]\label{thm:sobolev_bounds}
{\color{blue}Let $0<a \le \rho_0<b$ be a smooth initial condition such that $\mathcal{F}_{p}[\rho_0]<+\infty$ for some $p>d$. Then there exists $\eta_{d}$ depending on the dimension such that if $\|\rho_0\|_{L^{\frac{d}{2}}(\Omega)}\le \eta_d$, there exists a positive weak solution $\rho$ of the KS system~\eqref{eq:KS} with initial condition $\rho_0$. Moreover  $\rho$ is bounded in $L^{\infty}((0,T)\times\Omega)\cap L^{\infty}(0,T; W^{1,p}(\Omega))$.} More precisely there exists $C_1, C_2$ depending on $\|\rho(t)\|_{L^{\infty}}$ such that for all $t\in(0,T)$, 
$$
\|\rho(t)\|_{W^{1,p}}^{p} \le C_1(\mathcal{F}_{p}[\rho(t)] + 1),
$$ 
where $\mathcal{F}_{p}[\rho(t)]$ satisfy the dissipation inequality
$$
\f{d}{dt}\mathcal{F}_{p}[\rho(t)]\le C_2 \mathcal{F}_{p}[\rho(t)](\log(\mathcal{F}_{p}[\rho(t)]+1)+1). 
$$
In particular $\mathcal{F}_{p}[\rho(t)]\le (\mathcal{F}_{p}[\rho(0)]+1)^{\exp(C_2t)}\exp(\exp(C_2t)-1)-1.$
\end{thm}

{\color{blue}
\begin{remark}
Although this appears to be a natural result, we were not able to find in the literature a proof of the existence of a weak solution to~\eqref{eq:KS} that is bounded in $L^{\infty}((0,T)\times\Omega)$. It is even mentioned in~\cite{MR2099126} that this result is open. But such a bound should follow from an estimate on a high enough $L^{p}$ bound of $\rho$, which can easily be obtained provided the $L^{\frac{d}{2}}$-norm of the initial data is sufficiently small. Combining this estimate with the Alikakos iteration method should then yield the desired $L^{\infty}$ bound.

In fact, we use this strategy in the JKO scheme introduced below and we contruct a solution satisfying this regularity. For this reason, we do not repeat the argument here and simply assume the existence of a weak solution that is bounded in $L^{\infty}$. Note that the restriction on the initial data in the JKO scheme depends on the final time, but this can be removed for the continuous PDE in another approximating scheme as in~\cite{SUGIYAMA2006333}. The same argument holds for positivity: we prove it in the JKO scheme; but this would follow also on the equation by maximum principle. Our main focus is instead proving the $L^{\infty}(0,T; W^{1,p}(\Omega))$ bound, given the $L^{\infty}((0,T)\times\Omega)$ bound.
Even though the solution under consideration is only weak, all the computations presented here can be justified rigorously at the level of an appropriate approximation scheme, as in~\cite{SUGIYAMA2006333}. We therefore carry out the computations only at a formal level.

\end{remark}
}

    To state our result about the JKO scheme, we first introduce some notations. Let $\mathcal{F}: \mathcal{P}(\Omega)\to \R\cup\{+\infty\}$ be a functional defined on probability measures and $\rho_{0}$ an initial condition. Let $\tau>0$ be a fixed time step and $\eta\in \mathcal{P}(\Omega)$. Then, we denote by $Prox_{\mathcal{F}}^{\tau}(\eta)$ the solutions of the minimizing problem
$$
\argmin_{\rho\in\mathcal{P}(\Omega)} \left\{\mathcal{F}({\color{blue}\rho}) + \f{W^{2}_{2}(\rho,\eta)}{2\tau}\right\}.
$$
where the minimization runs across all $\rho$ with same mass than $\eta$.
The Keller-Segel equation is the gradient flow of the functional
\begin{equation}\label{eq:functional_gradientflow_KS}
\mathcal{J}(\rho) : = \int_{\Omega} \rho\log \rho -\f{1}{2} \rho u[\rho]\diff x.   
\end{equation}
{\color{blue}
However, it is not clear that a minimizer of this functional exists. The idea is to introduce a penalized scheme, where the artificial term is not really seen by the scheme.  We fix $M_d,\nu>0$ (to be chosen later) and define
\[
\widetilde{\mathcal{J}}[\rho]
 := 
\mathcal{J}[\rho] + \frac{1}{\nu}\Bigl(C_0\|\rho\|_{L^{\frac d2}(\Omega)}^2-M_d\Bigr)_+,
\qquad (x)_+:=\max\{x,0\},
\]
where $C_0$ is the best constant in the inequality 

\begin{equation}\label{eq:estimate_H1_Ld_rewritten}
\int_\Omega \rho\,u  = \int_\Omega |\nabla u|^2
 \le  C_0\,\|\rho\|_{L^{\frac d2}(\Omega)}^2.
\end{equation}

This constant exists by Lemma~\ref{lem:sobone2}.

We define the sequence of the JKO scheme, 
\begin{equation}\label{eq:def_JKO}
\rho_{\tau}^{0} = \rho_0, \quad \rho_\tau^{n+1} \in Prox_{\widetilde{\mathcal{J}}}^{\tau}(\rho^{n}_{\tau})
\end{equation}

}
This define a curve \( t \mapsto \rho_\tau(t) \) in the space of probability measures, with \(\rho_\tau(0) = \rho_0\) and 

\begin{equation}\label{eq:def_JKO2}
\rho_\tau(t) = \rho_\tau^{n+1} \text{ for } t \in (n\tau, (n+1)\tau],
\end{equation}
and $n\in[0,N]$ with $N\tau=T$.

\begin{thm}[JKO scheme]\label{thm:JKO}
Let $a\le \rho_{0}\le b$, where $a,b>0$ be an initial condition in $L^1(\Omega)$ and such that $\mathcal{F}_{p}[\rho(0)]<+\infty$ for all $1\le p<+\infty$. {\color{blue} Let $\rho_{\tau}$ the constructed curve~\eqref{eq:def_JKO2} of the JKO scheme.   Then there exists  $M_d,\nu >0$ in the definition of $\widetilde{\mathcal{J}}$  and $\eps_{d}$ depending on the dimension and the final time of existence $T$ such that if $\|\rho_0\|_{L^{\frac{d}{2}}(\Omega)}\le \eps_d$:
\begin{itemize}
    \item $\|\rho_{\tau}^{n}\|_{L^{\frac d2}}^2\le \frac{M_d}{C_0}$ for some $C$ independent of $n$ and $\widetilde{\mathcal J}[\rho_n]=\mathcal J[\rho_n]$ so the artificial term disappears
    \item $\rho_{\tau}$ is bounded uniformly in $\tau$ in $L^{\infty}((0,T)\times\Omega)\cap L^{\infty}(0,T; W^{1,p}(\Omega))\cap L^{2}(0,T; H^{2}(\Omega))$ for all $1\le p<+\infty$
    \item $\rho_{\tau}$ converges to $\rho$ as $\tau\to 0$ strongly in $L^{2}(0,T; H^{2}(\Omega))$, where $\rho$ is a weak solution of the KS system~\eqref{eq:KS}.
\end{itemize}}
\end{thm}

{\color{blue}
The restriction on $\varepsilon_d$ at the final time is only needed to prove the $L^\infty$ bounds along the interpolation curve, and it comes from the boundedness of the domain. The rest of the argument does not depend on this restriction and relies solely on the $L^\infty$ bound itself. We impose this assumption only to provide a simple way to obtain such bounds. We also remark that similar estimates are available without any restriction on the final time, but only locally in time, as shown in~\cite{carrillo2018}. In particular, our result applies in this local-in-time setting as well.}

\begin{remark}
\begin{enumerate}
{\color{blue}\item On a bounded domain, assuming that  the $L^{\frac d2}$ norm is small, implies that the mass is small, therefore we are treating functions which are not probability measures in the JKO scheme. This would require a small adaptation of the language and the definition of the Wasserstein distance, that we omit here for simplicity. 
\item The $L^{\infty}((0,T)\times\Omega)$ estimate on $\rho_{\tau}$ has been proven in~\cite{dimarino} in dimension $d=2$ and for an initial condition with small mass. The adaptation to the higher dimensional case presents several difficulties: even proving existence of the sequence of the JKO scheme is not immediate and we have to rely on the penalization scheme trick. 
}
\end{enumerate}
\end{remark}

\textbf{Structure of the paper.}
Section~\ref{sect:functional_sobolev} is dedicated to the proof of Theorem~\ref{thm:functional_inequality} and Theorem~\ref{thm:sobolev_bounds}. In particular, we show how Theorem~\ref{thm:functional_inequality} follows from an interpolation between $L^{\infty}$ and $C^{0,\alpha}$ and the Calderón-Zygmund theory. This theorem allows us to prove Theorem~\ref{thm:sobolev_bounds} by computing the dissipation of the functional $\mathcal{F}_{p}$. In Section~\ref{sect:JKO} we prove Theorem~\ref{thm:JKO}. We start by reviewing some concepts of optimal transport theory and functional analysis. We also highlight the relevant literature on the JKO scheme applied to the Keller-Segel system and the Fokker-Planck equation, as our approaches share some similarities. Then, we establish $L^{\infty}_{t}W^{1,p}_{x}$ estimates for the JKO scheme and prove a maximum principle. Finally, we prove the convergence of the scheme in $L^{2}_{t} H^{2}_{x}$.\\

\textbf{Notations.} We denote by $L^{p}(\Omega)$, $W^{m,p}(\Omega)$ the usual Lebesgue and Sobolev spaces, and by $\|\cdot\|_{L^{p}}$, $\|\cdot\|_{W^{m,p}}$ their corresponding norms. As usual, $H^{s}(\Omega) = W^{s,2}(\Omega)$. The spaces $C^{0,\alpha}(\bar{\Omega})$ with $0<\alpha<1$ are the usual Hölder spaces, with corresponding norm $\|\cdot\|_{C^{0,\alpha}}$. We make an abuse of notation when the norms concern a vector, or a matrix and still write for instance $\|\nabla u\|_{L^{p}}$ or $\|D^{2} u\|_{L^{p}}$ instead of $\|\nabla u\|_{L^{p}(\Omega; \R^d)}$ and $\|D^2 u\|_{L^{p}(\Omega;\R^{d^2})}$. We often write $C$ for a generic constant appearing in the different inequalities. Its value can change from one line to another, and its dependence to other constants can be specified by writing $C(a,...)$ if it depends on the parameter $a$ and other parameters.

\section{Functional inequality and Sobolev estimates}\label{sect:functional_sobolev}

The idea behind Theorem~\ref{thm:functional_inequality} is based on the limits of the Calderón-Zygmund estimate for the Hessian, $\|D^2 u \|_{L^p} \leq C(p)\|\Delta u\|_{L^p}$. This inequality is valid for $1 < p < +\infty$, but breaks down at the boundary cases $p = 1$ and $p = +\infty$, where the constant blows up, unless the dimension is one, where the result is obvious as the Hessian and the Laplacian coincide.  In two or greater  dimensions, the result is in general false. As an example let 
$$
u(x,y) = xy \log(x+y), \quad x,y>0. 
$$
We compute:
$$
\Delta u = 2 -\f{2 x y}{(x+y)^2}, \quad \p_{xy}u = \log(x+y) + 1 - \f{xy}{(x+y)^2}.                                       
$$
We observe that $\Delta u \in L^{\infty}$ but not $D^{2} u$. In spite of this, the $L^{\infty}$ norm of a function can be interpolated between its $L^p$ norm (for $p<+\infty)$ and its Hölder continuity norm, $C^{0,\alpha}$. In these two norms, the Hessian can be controlled by the Laplacian. This observation suggests that obtaining a precise value for the constant $C(p)$ in the Calderón-Zygmund estimate could improve these interpolation results and prove our theorem. To achieve this, we first prove how the $L^{\infty}$ norm can be interpolated between the $L^p$ and $C^{0,\alpha}$ norms. Then we evaluate the constant $C(p)$. Lastly, we optimize and prove Theorem~\ref{thm:functional_inequality}.

\begin{lemma}[Interpolation between $L^p$ and $C^{0,\alpha}$] \label{lem:interpolation_Lp_Calpha}
Let $0<\alpha<1$. Let $\Omega$ be a smooth bounded domain in $\R^d$.{\color{blue} In particular, there exists $R_{\Omega}$ and $C_{\Omega}$ such that for all $x\in \overline{\Omega}$ and all $R\le R_\Omega$: $|B(x,R)\cap \Omega|\ge C_{\Omega}|B(x,R)|$.} Then there exists a constant $C$ independent of $p$ such that for all $f\in L^{p}(\Omega)\cap C^{0,\alpha}(\bar{\Omega})$
$$
\|f\|_{L^{\infty}}^{p+\f{d}{\alpha}}\le C \|f\|_{L^{p}
}^{p}\|f\|_{C^{0,\alpha}}^{\f{d}{\alpha}}$$.
\end{lemma}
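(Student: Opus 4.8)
The plan is to prove the inequality by localizing near a point where $|f|$ is maximal and then trading the Hölder modulus of continuity against the (interior) measure density of the domain. Since $f\in C^{0,\alpha}(\bar\Omega)$ and $\bar\Omega$ is compact, $|f|$ attains its maximum $M:=\|f\|_{L^\infty}$ at some $x_0\in\bar\Omega$. Writing $L:=[f]_{C^{0,\alpha}}\le\|f\|_{C^{0,\alpha}}$ for the Hölder seminorm, the elementary bound $|f(x)|\ge M-L|x-x_0|^\alpha$ shows that $|f|\ge M/2$ on the ball $B(x_0,r)$ with
\[
r:=\min\Big\{\big(M/(2L)\big)^{1/\alpha},\,h\Big\},
\]
where $h>0$ is a fixed length scale attached to $\Omega$ (fixed in the next step).

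The second ingredient is to quantify how much of $\Omega$ sits in that ball. Because $\partial\Omega$ is smooth (indeed convex in the application), $\Omega$ satisfies a uniform interior cone condition: there exist $h>0$ and $c_\Omega>0$, depending only on $\Omega$, with $|B(x_0,r)\cap\Omega|\ge c_\Omega\, r^{d}$ for every $x_0\in\bar\Omega$ and every $0<r\le h$. This is the crucial point that keeps the argument valid even when the maximum is attained on the boundary, where only a definite fraction of the ball lies inside $\Omega$. Integrating $|f|^p\ge (M/2)^p$ over $B(x_0,r)\cap\Omega$ then gives
\[
\|f\|_{L^p}^p\ \ge\ \Big(\tfrac{M}{2}\Big)^{p}\,c_\Omega\, r^{d}.
\]

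It then remains to rearrange, distinguishing the two regimes in the definition of $r$. If $r=(M/(2L))^{1/\alpha}\le h$, substituting $r^{d}=(M/(2L))^{d/\alpha}$ and using $L\le\|f\|_{C^{0,\alpha}}$ yields $M^{p+d/\alpha}\le c_\Omega^{-1}2^{p+d/\alpha}\|f\|_{L^p}^p\|f\|_{C^{0,\alpha}}^{d/\alpha}$. If instead $r=h$, then $M^p\le 2^{p}(c_\Omega h^{d})^{-1}\|f\|_{L^p}^p$, and multiplying by $M^{d/\alpha}\le\|f\|_{C^{0,\alpha}}^{d/\alpha}$ reaches the same conclusion. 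In both cases one obtains $M^{p+d/\alpha}\le C_p\,\|f\|_{L^p}^p\,\|f\|_{C^{0,\alpha}}^{d/\alpha}$.

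The point requiring real care is the $p$-dependence of the constant. The crude ``half the maximum'' truncation produces $C_p\sim 2^{p}$, so the inequality is best read in its scale-invariant form $\|f\|_{L^\infty}\le C\,\|f\|_{L^p}^{p/(p+d/\alpha)}\|f\|_{C^{0,\alpha}}^{(d/\alpha)/(p+d/\alpha)}$, with $C=C_p^{1/(p+d/\alpha)}$ bounded uniformly in $p\ge1$ (indeed $C_p^{1/(p+d/\alpha)}\to 2$); it is precisely this uniform control of the root that is used afterwards when the inequality is applied to $f=D^2u$ and optimized over $p$ to produce the logarithm in Theorem~\ref{thm:functional_inequality}. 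If one wants the sharp polynomial constant $C_p\sim p^{d/\alpha}$ rather than $2^{p}$, one replaces the half-maximum truncation by integrating the full profile $(M-L|x-x_0|^\alpha)_+^p$ inside the cone, which produces a Beta integral $\int_0^1(1-s)^p s^{d/\alpha-1}\,ds=\Gamma(d/\alpha)\Gamma(p+1)/\Gamma(p+1+d/\alpha)\sim\Gamma(d/\alpha)\,p^{-d/\alpha}$; the corresponding extremal ``spike'' $f(x)=(M-L|x-x_0|^\alpha)_+$ shows that some $p$-growth of $C_p$ is unavoidable. The main obstacle is thus twofold: securing the measure-density estimate uniformly up to $\partial\Omega$ via the cone condition, and correctly bookkeeping the $p$-dependence so that the scale-invariant constant stays uniform.
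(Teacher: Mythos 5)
Your argument is correct and is essentially the paper's own: localize at a point where $|f|$ attains its maximum, use the H\"older seminorm to bound $|f|$ from below on a small ball, integrate, and balance the radius against the two norms; the paper's sketch does exactly this, only it optimizes the radius $R$ continuously instead of using your half-maximum truncation with a two-case analysis, which is an immaterial difference. You do supply two details the paper's sketch omits: the measure-density (interior cone) estimate $|B(x_0,r)\cap\Omega|\ge c_\Omega r^d$ needed when the maximum sits on $\partial\Omega$ (the sketch simply assumes the domain is unbounded), and the observation -- supported by your spike example -- that the constant in the stated power form necessarily grows with $p$, so the uniform-in-$p$ constant really lives in the scale-invariant form $\|f\|_{L^\infty}\le C\|f\|_{L^p}^{1-\theta}\|f\|_{C^{0,\alpha}}^{\theta}$, which is precisely the form the paper invokes in the proof of Theorem~\ref{thm:functional_inequality}.
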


{\color{blue}
\begin{proof}
Let $x$ be a maximum of $|f|$ in $\overline{\Omega}$. Then:
$$
|f(y)|\ge \|f\|_{L^{\infty}}-\|f\|_{C^{0,\alpha}}R^{\alpha}, \quad \forall y\in B(x,R)\cap\Omega.
$$
We impose the condition 
\begin{equation}\label{eq:cond_R}
R\le \min\left(\left(\frac{\|f\|_{L^{\infty}}}{\|f\|_{C^{0,\alpha}}}\right)^{1/\alpha},R_{\Omega}\right) 
\end{equation}
so that the right hand side is nonnegative.
Then we integrate on $B(x,R)\cap\Omega$:
$$
\int_{B(x,R)\cap\Omega}|f|^{p}\ge  (\|f\|_{L^{\infty}}-\|f\|_{C^{0,\alpha}}R^{\alpha})^{p} |B(x,R)\cap\Omega|\ge C (\|f\|_{L^{\infty}}-\|f\|_{C^{0,\alpha}}R^{\alpha})^{p} R^d
$$
as $|B(x,R)\cap \Omega|\ge C_{\Omega}|B(x,R)|$.
The left hand side is bounded by the $L^p$ norm of $f$ and we obtain 
$$
\|f\|_{L^{\infty}}\le C\f{\|f\|_{L^{p}}}{R^{d/p}}+ \|f\|_{C^{0,\alpha}}R^{\alpha}.
$$
We choose $R^{\alpha+d/p}=\lambda\f{\|f\|_{L^{p}}}{\|f\|_{C^{0,\alpha}}}$  where $\lambda> 0$ is small enough such that~\eqref{eq:cond_R} is satisfied. This yields the result.
\end{proof}}

\begin{lemma}[Calderon-Zygmund estimate]\label{lem:Calderon_Zygmund} Let $\Omega$ be a smooth bounded domain in $\R^d$. 
Let $1< p< 2$, then for all $u\in W^{2,p}(\Omega)$ {\color{blue}with $u=0$ on $\partial\Omega$:}
$$
\|D^{2} u\|_{L^{p}}\lesssim \left(\f{2p}{(p-1)(2-p)}\right)^{1/p} \|\Delta u \|_{L^{p}}.
$$
If $p> 2$, 
$$
\|D^{2} u\|_{L^{p}}\lesssim \left(\f{p(p-1)}{p-2}\right)^{1-1/p}\|\Delta u \|_{L^{p}}.
$$
As a corollary, for $p$ strictly greater than 2, there exists $C$ such that the norm evolves as:
$$
\|D^{2} u\|_{L^{p}}\le Cp\|\Delta u \|_{L^{p}}.
$$
\end{lemma}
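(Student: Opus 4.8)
The plan is to track the explicit $p$-dependence of the constant in the Calderón–Zygmund inequality by reducing the map $\Delta u \mapsto D^2 u$ to the second-order Riesz transforms and then applying the Marcinkiewicz interpolation theorem in its quantitative form. On the whole space one writes $\p_i\p_j u = R_{ij}(\Delta u)$, where $R_{ij}$ is the Fourier multiplier with symbol $\xi_i\xi_j/|\xi|^2$. Since $\sum_{ij}|\xi_i\xi_j/|\xi|^2|^2 = 1$, Plancherel's theorem gives $\|D^2 u\|_{L^2} = \|\Delta u\|_{L^2}$, so the operator is an $L^2$-isometry and enjoys a strong $(2,2)$ bound with constant exactly $1$. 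On the bounded domain $\Omega$ the same reduction holds up to a standard localization/extension (or reflection) argument for the Dirichlet problem, whose multiplicative constants are independent of $p$ and are therefore absorbed into the symbol $\lesssim$.

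First I would establish the weak $(1,1)$ bound $\|R_{ij} g\|_{L^{1,\infty}} \le A_0\|g\|_{L^1}$ with a purely dimensional constant $A_0$, via the classical Calderón–Zygmund decomposition, using that the kernel of $R_{ij}$ is a standard singular kernel with a dimensional Hörmander constant. Next I would invoke the quantitative Marcinkiewicz theorem between the weak $(1,1)$ endpoint and the strong $(2,2)$ endpoint: for $1<p<2$ with $\tfrac1p = (1-\theta) + \tfrac{\theta}{2}$, that is $\theta = 2-\tfrac2p$, it yields
$$
\|R_{ij}g\|_{L^p} \le 2\left(\frac{p}{p-1}+\frac{p}{2-p}\right)^{1/p} A_0^{1-\theta}\,\|g\|_{L^p}.
$$
Simplifying $\frac{p}{p-1}+\frac{p}{2-p} = \frac{p}{(p-1)(2-p)}$ and bounding $A_0^{1-\theta}$ by the $p$-independent number $\max(1,A_0)$ gives the first claimed estimate, with constant $\lesssim\left(\frac{2p}{(p-1)(2-p)}\right)^{1/p}$.

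The case $p>2$ then follows by duality. The symbol $\xi_i\xi_j/|\xi|^2$ is real and even, so $R_{ij}$ is self-adjoint on $L^2$ and $\|R_{ij}\|_{L^p\to L^p} = \|R_{ij}\|_{L^{p'}\to L^{p'}}$ with $p' = p/(p-1)\in(1,2)$. Substituting $p'-1 = \frac{1}{p-1}$, $2-p' = \frac{p-2}{p-1}$ and $\frac{1}{p'} = 1-\frac1p$ into the $1<p<2$ bound produces, after simplification, the constant $\lesssim\left(\frac{p(p-1)}{p-2}\right)^{1-1/p}$. For the corollary it suffices to analyze the asymptotics: as $p\to\infty$ one has $\frac{p(p-1)}{p-2}\sim p$, hence $\left(\frac{p(p-1)}{p-2}\right)^{1-1/p}\sim p^{1-1/p}\le p$, so the bound is $\le Cp$ for $p$ bounded away from $2$.

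I expect the main obstacle to be the quantitative interpolation step: one must use a version of the Marcinkiewicz theorem that retains explicit control of the blow-up of the constant as $p\to1^+$ and $p\to2^-$, rather than the usual qualitative statement, and verify that the $L^2$ endpoint contributes a constant $1$ so that no spurious $p$-dependence enters through $A_0^{1-\theta}$. A secondary technical point is transferring the whole-space Riesz-transform bounds to $\Omega$ without spoiling the $p$-dependence; this is handled by extension/reflection arguments for the Dirichlet Laplacian, all of whose constants are $p$-independent.
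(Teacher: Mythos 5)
Your proposal is correct and follows essentially the same route as the paper: both obtain the $1<p<2$ constant from the quantitative Marcinkiewicz interpolation between a weak $(1,1)$ and a strong $(2,2)$ endpoint (the paper by inspecting the proof of Theorem 9.9 in Gilbarg--Trudinger, you by writing $\partial_i\partial_j u = R_{ij}\Delta u$ and using Plancherel plus the Calder\'on--Zygmund decomposition directly), and then pass to $p>2$ by self-adjointness/duality with the identical constant computation. Your remark that the final bound $\|D^{2}u\|_{L^{p}}\le Cp\|\Delta u\|_{L^{p}}$ only follows from the displayed $p>2$ estimate for $p$ bounded away from $2$ is a fair caveat that the paper glosses over, but it is harmless here since only the large-$p$ regime is used in the applications.
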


\begin{proof}
The proof is based on a careful inspection of the proof of~\cite[Theorem 9.9]{gilbarg2001elliptic}. Let $Tf=\p_{ij}u$, where $\Delta u =f$. The authors prove first a weak $L^1$ and $L^2$ estimate on $T$. By Marcinkiewicz interpolation theorem~\cite[Theorem 9.8]{gilbarg2001elliptic}, they deduce for $1<p\le 2$:
$$
\|Tf\|_{L^{p}}\le C(p)\|f\|_{L^{p}}.
$$
where $C(p)\lesssim  \left(\f{2p}{(p-1)(2-p)}\right)^{1/p}$. By duality (since $T^{*}= T$) let $p>2$ and $q$ the dual exponent $q=\f{p}{p-1}$. Then 
$$
\|Tf\|_{L^{p}}\le C(q)\|f\|_{L^{p}}.
$$
We compute the constant:
$$
C(q) = C\left(\f{p}{p-1}\right) \lesssim\left(\f{p(p-1)}{p-2}\right)^{1-1/p}.
$$
This ends the proof
\end{proof}

{\color{blue}Finally, we recall a classical result, on the Hölder regularity of the Poisson equation (see \cite[Theorem 6.8]{gilbarg2001elliptic} and its proof).
\begin{lemma}\label{lem:Hölder regularity}.
Let $\Omega$ be a smooth bounded domain in $\R^d$. Then there exists $C=C(d,\Omega,\alpha)$ such that for all $u$ satisfying $-\Delta u = \rho$ with $u=0$ on $\p\Omega$:
$$
\|D^{2} u\|_{C^{0,\alpha}} \le C\|\rho\|_{C^{0,\alpha}}.   
$$
\end{lemma}
}
We are now in position to prove Theorem~\ref{thm:functional_inequality}.
\begin{proof}[Proof of the Theorem~\ref{thm:functional_inequality}]
We write the interpolation inequality Lemma~\ref{lem:interpolation_Lp_Calpha} as 
$$
\|f\|_{L^{\infty}}\le C \|f\|_{L^{p}}^{1-\theta}\|f\|_{C^{0,\alpha}}^{\theta}
$$
where $\theta = \theta(p) = \f{d}{\alpha p+d}$.
We choose $f=\p_{ij} u$, where $-\Delta u =\rho$ in $\Omega$ and $u=0$ on $\p\Omega$ and then sum for all $i,j$. Using Lemma~\ref{lem:Calderon_Zygmund} and Lemma~\ref{lem:Hölder regularity} we obtain for $p> 2$:
\begin{align*}
\|D^{2}u\|_{L^{\infty}}\le C p^{1-\theta}\|\rho\|_{L^{p}}^{1-\theta}\|\rho\|_{C^{0,\alpha}}^{\theta}
\le C \f{1}{\theta} \|\rho\|_{L^{p}}^{1-\theta}\|\rho\|_{C^{0,\alpha}}^{\theta}.
\end{align*}
where $-\Delta u =\rho$. The last estimate is simply a consequence of $p^{1-\theta}\le p$ since $p\ge 1$ and $p\le \f{d}{\alpha\theta}$. Before optimizing with respect to $\theta$ we estimate the $L^p$ norm by the $L^{\infty}$ norm 
$$
\|D^{2}u\|_{L^{\infty}}\le C \f{1}{\theta} \|\rho\|_{L^{\infty}}^{1-\theta}\|\rho\|_{C^{0,\alpha}}^{\theta}.
$$
Now we optimize in $\theta$. For that purpose we consider the logarithmn of this previous quantity and optimize
$$
f(\theta) = \log\left(\f 1\theta\right) + (1-\theta) \log(a) + \theta \log(b)
$$
where $a=\|\rho\|_{L^{\infty}}$, $b=\|\rho\|_{C^{0,\alpha}}$. We remind that $b\ge a$.
Differentiating with respect to $\theta$ yields
$$
f'(\theta)=-\f{1}{\theta}+\log\left(\f{b}{a}\right).
$$
The minimum is therefore reached for 
$$
\theta_{0} = \f{1}{\log\left(\f{b}{a}\right)}.
$$
However in our computations we assumed that $0<\theta<1$ and $p>2$ with $\theta = \theta(p) = \f{d}{\alpha p + d}$. And it is possible that such a $\theta_{0}$ does not fall in this range. This would mean that either $\log\left(\f{b}{a}\right)\le 1$ or $\log\left(\f{b}{a}\right)\le \f{2\alpha}{d}+1$. In both cases $\log\left(\f{b}{a}\right)\le C_1$ and we conclude $b\le C a$ for some universal constant $C$. In that particular case, coming back to the definition of $a$ and $b$ that would mean $\|\Delta u\|_{C^{0,\alpha}}\le C\|\Delta u\|_{L^{p}}$ and we could deduce
$$
\|D^{2} u\|_{L^{\infty}}\le C\|D^{2} u\|_{C^{0,\alpha}} \le C \|\Delta u\|_{C^{0,\alpha}}\le C \|\Delta u\|_{L^{p}} \le C \|\Delta u\|_{L^{\infty}}
$$
and the theorem would also be proved with an even better estimate. Therefore we can neglect this case and assume that $\theta_{0}$ is in the range $0<\theta_{0}<1$ and $p> 2$. 
Now we can compute
$$
f(\theta_{0}) = \log\left(\log\left(\f{b}{a}\right)\right)+1 + \log(a)
$$
Then the inequality is simply a consequence of 
$$
\|D^{2}u\|_{L^{\infty}}\le C\exp(f(\theta_{0})).
$$

This concludes the proof of Theorem~\ref{thm:functional_inequality}, from which we deduce Corollary~\ref{cor:functional_inequality}.
\end{proof}

With this functional inequality, we are now prepared to find the Sobolev estimates in the Keller-Segel system, that is to prove Theorem~\ref{thm:sobolev_bounds}. We consider the functional
$$
\mathcal{F}_{p}[\rho]= \int_{\Omega}\rho \left|\nabla (\log \rho - u[\rho])\right|^p\diff x.
$$

Proving that this functional remains bounded along the solutions of the Keller-Segel system, yields $L^{\infty}(0,T; W^{1,p}(\Omega))$ estimates on the solution. Moreover, its dissipation, as it will be clear from the following computations, provides $L^{2}(0,T; H^{2}(\Omega))$ estimates on the solutions. In what follows, we remind that $\vec{n} = (n_{i})_{i}$ is the unit normal to the domain $\Omega$.  

\begin{lemma}[Entropy equality]
Let $\rho$ {\color{blue}be a smooth positive solution} of~\eqref{eq:KS}. With  $Z=\log\rho -u[\rho]$, we have
\begin{equation}
\begin{split}
&\f{d}{dt}\mathcal{F}_{p}[\rho] + p\int_{\Omega} \rho|\nabla Z|^{p-2}(\p_{ij}Z)^2 \diff x + p(p-2)\int_{\Omega} \rho|\nabla Z|^{p-4} |\nabla Z\cdot\nabla\p_i Z|^2\diff x\\=&p\int_{\Omega} \rho |\nabla Z|^{p-2}\nabla Z\cdot D^2 u[\rho]\nabla Z \diff x- p\int_{\Omega} \rho |\nabla Z|^{p-2} \nabla Z\cdot \nabla u[\p_{t}\rho]\diff x\\
&+ p\int_{\p\Omega}\rho \p_{ij}Z \p_{i}Z |\nabla Z|^{p-2} n_{j}\diff \mathcal{H}^{d-1}.
\end{split}
\end{equation} 
\end{lemma}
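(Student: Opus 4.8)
The plan is to read the Keller--Segel equation as the continuity equation $\p_t\rho = \Div(\rho\nabla Z)$ with $Z = \log\rho - u[\rho]$, for which the no-flux boundary condition is exactly $\rho\,\nabla Z\cdot\vec{n} = 0$ on $\p\Omega$. Differentiating $\mathcal{F}_{p}[\rho] = \int_\Omega \rho|\nabla Z|^p\diff x$ in time gives
$$\f{d}{dt}\mathcal{F}_{p}[\rho] = \int_\Omega \p_t\rho\,|\nabla Z|^p\diff x + p\int_\Omega \rho|\nabla Z|^{p-2}\nabla Z\cdot\nabla\p_t Z\diff x.$$
Since $u[\cdot]$ is linear, $\p_t Z = \p_t\rho/\rho - u[\p_t\rho]$, and the piece of the second integral carrying $u[\p_t\rho]$ is already the term $-p\int_\Omega\rho|\nabla Z|^{p-2}\nabla Z\cdot\nabla u[\p_t\rho]\diff x$ appearing on the right-hand side; everything else must reduce to the three remaining terms. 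Throughout I use the summation convention on repeated indices $i,j,k,l$.

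Next I would use $\p_t\rho/\rho = \Div(\rho\nabla Z)/\rho = \Delta Z + \nabla\log\rho\cdot\nabla Z$ together with $\nabla\log\rho = \nabla Z + \nabla u$ to write $\p_t\rho/\rho = \Delta Z + |\nabla Z|^2 + \nabla u\cdot\nabla Z =: h$. For the first integral, integrating by parts and invoking the no-flux condition kills the boundary term and gives $\int_\Omega\p_t\rho\,|\nabla Z|^p = -p\int_\Omega\rho|\nabla Z|^{p-2}\p_iZ\,\p_{ij}Z\,\p_jZ\diff x$. In the remaining bulk integral I expand $\nabla h$: the contribution of $\nabla u\cdot\nabla Z$ produces both $\nabla Z\cdot D^2u\,\nabla Z$ (precisely the Hessian-of-$u$ term claimed by the lemma) and a term $\p_ku\,\p_iZ\,\p_{ik}Z$; the contribution of $|\nabla Z|^2$ produces $2\,\p_iZ\,\p_kZ\,\p_{ik}Z$, which combines with the first integral to leave $+p\int_\Omega\rho|\nabla Z|^{p-2}\p_iZ\,\p_kZ\,\p_{ik}Z$; and the contribution of $\Delta Z$ produces the third-derivative term $p\int_\Omega\rho|\nabla Z|^{p-2}\p_iZ\,\p_{kki}Z\diff x$.

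The crux is this third-order term. I would integrate it by parts in $x_k$, writing $\p_{kki}Z = \p_k(\p_{ki}Z)$. The boundary term produced is $p\int_{\p\Omega}\rho|\nabla Z|^{p-2}\p_iZ\,\p_{ij}Z\,n_j\diff\mathcal{H}^{d-1}$, which is exactly the boundary integral claimed (note it is not of flux form, so the no-flux condition does not annihilate it). In the resulting bulk term the derivative $\p_k$ falls on $\rho|\nabla Z|^{p-2}\p_iZ$; distributing it and using $\p_k\rho = \rho(\p_kZ+\p_ku)$ and $\p_k|\nabla Z|^{p-2} = (p-2)|\nabla Z|^{p-4}\p_lZ\,\p_{kl}Z$ yields four pieces. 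Two of them cancel, respectively, the leftover $\p_iZ\,\p_kZ\,\p_{ik}Z$ term and the $\p_ku\,\p_iZ\,\p_{ik}Z$ term found above; the piece coming from $\p_k|\nabla Z|^{p-2}$ gives $-p(p-2)\int_\Omega\rho|\nabla Z|^{p-4}|\nabla Z\cdot\nabla\p_iZ|^2\diff x$ via the identity $\p_lZ\,\p_{kl}Z = \nabla Z\cdot\nabla\p_kZ$; and the last piece gives $-p\int_\Omega\rho|\nabla Z|^{p-2}(\p_{ij}Z)^2\diff x$. Collecting the surviving contributions yields the asserted equality.

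The main obstacle is purely the bookkeeping of this last integration by parts: isolating the single genuine boundary contribution, and matching the two dissipation integrals with the correct coefficients $p$ and $p(p-2)$ — the factor $p-2$ being forced by differentiating $|\nabla Z|^{p-2}$. I would also stress that the computation is an a priori one, carried out on a smooth, strictly positive solution so that $\log\rho$, $\nabla Z$ and all the integrations by parts are justified; the weights $|\nabla Z|^{p-4}$ and $|\nabla Z|^{p-2}$ can be made rigorous by replacing $|\nabla Z|$ with $\sqrt{|\nabla Z|^2+\eps}$ and letting $\eps\to 0$.
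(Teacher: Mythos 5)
Your proof is correct and takes essentially the same route as the paper's: differentiate $\mathcal{F}_{p}$ in time, split off the $u[\p_t\rho]$ contribution, substitute $\p_t\rho=\Div(\rho\nabla Z)$, and integrate by parts so that the no-flux condition kills one boundary term while the other survives as the stated boundary integral. The only difference is organizational --- you expand $\p_t\rho/\rho=\Delta Z+|\nabla Z|^2+\nabla u\cdot\nabla Z$ and perform the key integration by parts on the $\nabla\Delta Z$ term, whereas the paper keeps the divergence form and recombines a larger number of intermediate pieces ($A_1,\dots,B_4$); the cancellations, the coefficients $p$ and $p(p-2)$, and the surviving boundary term come out identically.
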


Here $u[\p_{t}\rho]$ is the solution of the Poisson equation $-\Delta u = \p_t\rho$ with Dirichlet boundary conditions. The notations $\p_{ij}Z,\,  \p_{i}Z, \, \p_{ij}Z\p_{i}Z$ appearing in the integrals simply mean the sum over the indices $i,j$. This notation is used throughout the proof. 

\begin{proof}

\begin{align*}
\f{d}{dt}\int_{\Omega} |\nabla Z|^p\diff \rho &= \int_{\Omega} \p_t\rho |\nabla Z|^p \diff x + p \int_{\Omega} |\nabla Z|^{p-2}\nabla Z\cdot(\nabla \p_t\rho-\p_t\rho\nabla\log\rho)\diff x\\
&-p\int_{\Omega} \rho |\nabla Z|^{p-2} \nabla Z\cdot \nabla u[\p_{t}\rho] \diff x  \\
&= \int_{\Omega} \p_{t}\rho \left(|\nabla Z|^p -p|\nabla Z|^{p-2}\nabla Z\cdot\nabla\log\rho\right) \diff x+ p\int_{\Omega} |\nabla Z|^{p-2}\nabla Z\cdot\nabla\p_t\rho\diff x\\
&- p\int_{\Omega} \rho |\nabla Z|^{p-2} \nabla Z\cdot \nabla u[\p_{t}\rho]\diff x.    \\
&= A+B+C.
\end{align*}
We use the fact that $\p_t \rho = \Delta \rho - \Div(\rho\nabla u[\rho])= \Div(\rho\nabla Z)$.
\begin{align*}
A= &-p\int_{\Omega} \rho \p_i Z |\nabla Z|^{p-2}\nabla Z\cdot \nabla\p_{i} Z \diff x\\
&+p(p-2)\int_{\Omega} \rho \p_i Z |\nabla Z|^{p-4} (\nabla Z\cdot \p_i \nabla Z) (\nabla Z\cdot \nabla\log\rho)\diff x\\
& + p\int_{\Omega} \rho \p_i Z |\nabla Z|^{p-2}\nabla \p_i Z\cdot \nabla\log\rho\diff x\\
& + p\int_{\Omega} \rho \p_i Z |\nabla Z|^{p-2} \nabla Z\cdot \nabla \p_i \log\rho\diff x\\
& + \int_{\p\Omega}\rho \p_{i}Z \left(|\nabla Z|^{p} - p |\nabla Z|^{p-2} \nabla Z\cdot \nabla \log \rho \right)n_{i}\diff \mathcal{H}^{d-1}.
\end{align*}

The boundary conditions imply that the boundary term vanishes. By definition of $Z$ we can rewrite the third term of $A$ as:
$$
p\int_{\Omega} \rho \p_i Z |\nabla Z|^{p-2}\nabla \p_i Z \cdot(\nabla Z+ \nabla u[\rho]) \diff x.
$$

Combining it with the first term of $A$ we obtain 

\begin{align*}
A=&p(p-2)\int_{\Omega} \rho \p_i Z |\nabla Z|^{p-4} (\nabla Z\cdot \p_i \nabla Z) (\nabla Z\cdot \nabla\log\rho)\diff x\\
& + p\int_{\Omega} \rho \p_i Z |\nabla Z|^{p-2}\nabla \p_i Z\cdot \nabla u[\rho]\diff x\\
& + p\int_{\Omega} \rho \p_i Z |\nabla Z|^{p-2} \nabla Z\cdot \nabla \p_i \log\rho\diff x\\
& = A_1 + A_2 + A_3.
\end{align*}

Now we focus on $B$, using once again the formula $\p_t \rho = \Div(\rho\nabla Z)$. 
\begin{align*}
B = &p \int_{\Omega} |\nabla Z|^{p-2}\nabla Z\cdot \nabla \Div(\rho\nabla Z)\diff x\\
= &p \int_{\Omega} |\nabla Z|^{p-2}\p_i Z\p_{ij}(\rho\p_{j}Z)\diff x\\
=& -p(p-2)\int_{\Omega} |\nabla Z|^{p-4}(\nabla Z\cdot\nabla\p_{j}Z)\p_i Z \p_{i}(\rho \p_j Z)\diff x -p\int_{\Omega} |\nabla Z|^{p-2}\p_{ij} Z \p_{i}(\rho\p_j Z)\diff x\\
&+ p\int_{\p\Omega}\p_{i}(\rho \p_{j}Z) \p_{i}Z |\nabla Z|^{p-2} n_{j}\diff \mathcal{H}^{d-1}.
\end{align*}
Expanding the last term in $B$ we obtain 

\begin{align*}
B = & -p(p-2)\int_{\Omega} |\nabla Z|^{p-4}(\nabla Z\cdot\nabla\p_{j}Z)\p_i Z \p_{i}(\rho \p_j Z)\diff x\\
&  -p\int_{\Omega} |\nabla Z|^{p-2}\p_{ij} Z \p_{i}\rho\p_j Z\diff x\\
& -p\int_{\Omega} \rho|\nabla Z|^{p-2}(\p_{ij} Z)^2\diff x\\
& + p\int_{\p\Omega}\p_{i}(\rho \p_{j}Z) \p_{i}Z |\nabla Z|^{p-2} n_{j}\diff \mathcal{H}^{d-1}\\
& = B_1 + B_2 + B_3 + B_4.
\end{align*}
Observe that the last term $B_3$ is a diffusion that has a good sign. The boundary term $B_4$ can be written as
\begin{align*}
B_4 &= p\int_{\p\Omega}\p_{i}(\rho \p_{j}Z) \p_{i}Z |\nabla Z|^{p-2} n_{j}\diff \mathcal{H}^{d-1}\\
& = p\int_{\p\Omega}\p_{i}\rho \p_{j}Z \p_{i}Z |\nabla Z|^{p-2} n_{j} \diff \mathcal{H}^{d-1}+ p\int_{\p\Omega}\rho \p_{ij}Z \p_{i}Z |\nabla Z|^{p-2} n_{j}\diff \mathcal{H}^{d-1}.
\end{align*}
By Neumann boundary condition the first term vanishes and therefore 
$$
B_4 = \int_{\p\Omega}\rho \p_{ij}Z \p_{i}Z |\nabla Z|^{p-2} n_{j}\diff \mathcal{H}^{d-1}.
$$

It remains to compute the sum $A_1+ A_2 + A_3 + B_1 + B_2$.
First we obserte that since $i,j$ are dummy variables:
\begin{align*}
B_1 = &-p(p-2)\int_{\Omega} |\nabla Z|^{p-4}(\nabla Z\cdot\nabla\p_{i}Z)\p_j Z \p_{j}\rho\p_i Z\diff x \\
&-p(p-2)\int_{\Omega} |\nabla Z|^{p-4}(\nabla Z\cdot\nabla\p_{i}Z)\p_j Z \rho\p_{ij} Z\diff x\\
&= -p(p-2)\int_{\Omega} \rho\p_i Z|\nabla Z|^{p-4}(\nabla Z\cdot\nabla\p_{i}Z)(\nabla Z\cdot \nabla \log\rho)\diff x\\
&-p(p-2)\int_{\Omega} |\nabla Z|^{p-4}(\nabla Z\cdot\nabla\p_{i}Z)\p_j Z \rho\p_{ij} Z\diff x\\
&= -p(p-2)\int_{\Omega} \rho\p_i Z|\nabla Z|^{p-4}(\nabla Z\cdot\nabla\p_{i}Z)(\nabla Z\cdot \nabla \log\rho)\diff x \\
&-p(p-2)\int_{\Omega} \rho|\nabla Z|^{p-4}|\nabla Z\cdot\nabla\p_{i}Z|^2\diff x.
\end{align*}

And thus
\begin{align*}
A_1 + B_1 &= -p(p-2)\int_{\Omega} \rho|\nabla Z|^{p-4}|\nabla Z\cdot\nabla\p_{i}Z|^2\diff x,
\end{align*}
which has a good sign {\color{blue}for $p\ge 2$}.

It remains to compute $A_2 + A_3 + B_2$. First observe that
\begin{align*}
B_2 &= -p \int_{\Omega} |\nabla Z|^{p-2}\p_{ij}Z\p_{j}\rho\p_{i}Z\diff x\\
&= -p \int_{\Omega} |\nabla Z|^{p-2}\rho \p_{i}Z (\nabla \p_{i}Z\cdot\nabla \log\rho)\diff x.
\end{align*}

Therefore with the definition of $Z$:
\begin{align*}
A_2 + A_3 + B_2 &= p\int_{\Omega} \rho\p_{i}Z |\nabla Z|^{p-2}\nabla Z\cdot \nabla \p_i u[\rho]\diff x\\
&=p\int_{\Omega} \rho |\nabla Z|^{p-2}\nabla Z\cdot D^2 u[\rho]\nabla Z\diff x.
\end{align*}
This concludes the proof. 
\end{proof}

The boundary term can be rewritten with the following lemma from~\cite[Lemma 2.5]{toshpulatov}.
\begin{lemma}\label{lem:boundary_h}
Suppose $\Omega=\{h<0\}$ for a smooth function $h: \R^d\to \R$ with $\nabla h\ne 0$ on $\{h=0\}$. Therefore $\vec{n}=\f{\nabla h}{|\nabla h|}$ and for all $v$ with $v \cdot n =0$ on $\p\Omega$, we have for all $x\in \p\Omega$ the equality
$$
v(x)\cdot Dv(x) n(x) = -\f{v(x)\cdot D^{2}h(x)v(x)}{|\nabla h(x)|}.
$$
\end{lemma}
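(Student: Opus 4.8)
The plan is to encode the tangency condition $v\cdot n=0$ as the vanishing of the scalar function $g:=v\cdot\nabla h$ on $\p\Omega$, and then differentiate this relation in the (tangential) direction $v$. Since $\vec{n}=\nabla h/|\nabla h|$ on $\{h=0\}$, the hypothesis $v\cdot n=0$ is precisely $g=v\cdot\nabla h=0$ on $\p\Omega=\{h=0\}$.

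First I would observe that because $g$ vanishes identically on the level set $\{h=0\}$, its gradient is orthogonal to that hypersurface, so $\nabla g$ is parallel to $\nabla h$ (hence to $n$) at every boundary point. Cleanly, by the Hadamard division lemma one can write $g=\lambda h$ locally near $\p\Omega$ with $\lambda$ smooth, whence $\nabla g=\lambda\nabla h$ on $\{h=0\}$. Contracting with the tangential vector $v$ then gives $v\cdot\nabla g=\lambda\,(v\cdot\nabla h)=0$ on $\p\Omega$, the key scalar identity.

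Next I would expand $v\cdot\nabla g=0$ by the product rule. Writing $g=\sum_k v_k\,\p_k h$, we have $\p_j g=\sum_k(\p_j v_k)\p_k h+\sum_k v_k\,\p_{jk}h$, so
$$
0=v\cdot\nabla g=\sum_{j,k} v_j(\p_j v_k)\p_k h+\sum_{j,k} v_j v_k\,\p_{jk}h=|\nabla h|\,\big((v\cdot\nabla)v\big)\cdot n+v\cdot D^2h\,v,
$$
where in the first sum I used $\p_k h=|\nabla h|\,n_k$ and in the second I recognized $v^{\top}D^2h\,v$. Rearranging and dividing by $|\nabla h|\neq 0$ yields the claimed equality, with $v\cdot Dv\,n$ read as the convective term $((v\cdot\nabla)v)\cdot n=\sum_{j,k} v_j(\p_j v_k)n_k$.

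The only delicate points are bookkeeping rather than conceptual. One must fix the reading of the matrix–vector expression $v\cdot Dv\,n$; in the application it is harmless since there $v=\nabla Z$, so $Dv=D^2Z$ is symmetric and every index convention coincides, matching exactly the boundary integrand $\p_{ij}Z\,\p_i Z\,n_j$ appearing in the entropy equality. The other point is the justification that a function vanishing on $\{h=0\}$ has gradient proportional to $\nabla h$ there, which is standard (orthogonality of a gradient to its level sets, or the Hadamard lemma argument above). I therefore expect no substantial obstacle, and the whole computation reduces to the single tangential differentiation of $v\cdot\nabla h$.
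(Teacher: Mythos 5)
Your argument is correct. Note that the paper does not prove this lemma at all: it is quoted verbatim from Santambrogio--Toshpulatov (\cite[Lemma 2.5]{toshpulatov}), and your tangential-differentiation of the constraint $g=v\cdot\nabla h=0$ along $v$ is exactly the standard proof of that cited result, including the correct reading of $v\cdot Dv\,n$ as $\sum_{j,k}v_j(\p_j v_k)n_k$ (which coincides with the paper's integrand $\p_{ij}Z\,\p_i Z\,n_j$ since there $v=\nabla Z$). One small simplification: the Hadamard division step is unnecessary and slightly awkward if $v$ is only defined on $\bar{\Omega}$; it suffices to pick a curve $\gamma$ inside $\p\Omega$ with $\gamma(0)=x$, $\gamma'(0)=v(x)$ (legitimate because $v(x)$ is tangent) and differentiate the identity $g\circ\gamma\equiv 0$, which uses only boundary values of $g$ and the $C^1$ regularity of $v$ up to $\p\Omega$.
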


We can therefore rewrite the entropy equality in the following form  
\begin{proposition}[Entropy equality]\label{prop:entropy_equality}
Let $\rho$ be a {\color{blue}smooth positive solution} of~\eqref{eq:KS}. Suppose $\Omega=\{h<0\}$ for a smooth function $h: \R^d\to \R$ with $\nabla h\ne 0$ on $\{h=0\}$. With  $Z=\log\rho - u[\rho]$, we have
\begin{equation}\label{eq:ODE_FP}
\begin{split}
&\f{d}{dt}\mathcal{F}_{p}[\rho] + p\int \rho|\nabla Z|^{p-2}(\p_{ij}Z)^2\diff x + p(p-2)\int \rho|\nabla Z|^{p-4} |\nabla Z\cdot\nabla\p_i Z|^2\diff x\\=&p\int \rho |\nabla Z|^{p-2}\nabla Z\cdot D^2 u[\rho]\nabla Z\diff x - p\int \rho |\nabla Z|^{p-2} \nabla Z\cdot \nabla u[\p_{t}\rho]\diff x\\
&- p\int_{\p\Omega}\rho \f{\nabla Z\cdot D^{2} h \nabla Z}{|\nabla h|} |\nabla Z|^{p-2}\diff \mathcal{H}^{d-1}.
\end{split}
\end{equation} 
\end{proposition}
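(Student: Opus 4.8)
The plan is to start from the entropy equality of the previous Lemma and merely recast its boundary term using Lemma~\ref{lem:boundary_h}; every interior term is left untouched. Writing $v=\nabla Z$, so that $v_i=\p_i Z$ and, since $v$ is a gradient, $Dv=D^2 Z$ is the (symmetric) Hessian with entries $\p_{ij}Z$, the boundary integrand of the previous Lemma is
$$
\p_{ij}Z\,\p_i Z\, n_j = \sum_{i,j}(\p_i Z)(\p_{ij}Z)\,n_j = v\cdot Dv\, n,
$$
so that the boundary term reads $p\int_{\p\Omega}\rho\,(v\cdot Dv\, n)\,|\nabla Z|^{p-2}\diff\mathcal{H}^{d-1}$.

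First I would check the hypothesis $v\cdot n=0$ on $\p\Omega$ required by Lemma~\ref{lem:boundary_h}. This is exactly where the no-flux boundary condition enters: the condition $\rho\,\nabla(\log\rho-u[\rho])\cdot\vec n=0$ on $\p\Omega$ is precisely $\rho\,(\nabla Z\cdot\vec n)=0$, and since $\rho>0$ up to the boundary in the regime considered (positivity being preserved, cf.\ the lower bound used later), one may divide by $\rho$ to get $\nabla Z\cdot\vec n=v\cdot n=0$ on $\p\Omega$. Applying Lemma~\ref{lem:boundary_h} with this $v$ then gives, pointwise on $\p\Omega$,
$$
v\cdot Dv\, n = -\f{v\cdot D^2 h\, v}{|\nabla h|} = -\f{\nabla Z\cdot D^2 h\,\nabla Z}{|\nabla h|}.
$$
Substituting this identity turns the boundary term into
$$
-p\int_{\p\Omega}\rho\,\f{\nabla Z\cdot D^2 h\,\nabla Z}{|\nabla h|}\,|\nabla Z|^{p-2}\diff\mathcal{H}^{d-1},
$$
which is exactly the boundary term appearing in the Proposition, and the stated equality follows.

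The argument is essentially a one-line substitution, so there is no genuine analytical obstacle. The only points that require care are (i) recognizing that the summed index expression $\p_{ij}Z\,\p_i Z\, n_j$ is the scalar $v\cdot Dv\, n$ for $v=\nabla Z$, where the symmetry of $Dv=D^2 Z$ makes the index convention in Lemma~\ref{lem:boundary_h} irrelevant, and (ii) justifying the passage from $\rho\,(\nabla Z\cdot n)=0$ to $\nabla Z\cdot n=0$, which relies on the strict positivity of $\rho$ up to the boundary. Should one prefer not to assume positivity on $\p\Omega$, one could instead keep the factor $\rho$ and observe that $\rho\,(\nabla Z\cdot n)=0$ already renders the boundary integrand well-defined; but under the working assumptions of the paper the positivity is available, so the clean form stated in the Proposition holds.
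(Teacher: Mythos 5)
Your proposal is correct and follows exactly the route the paper intends: Proposition~\ref{prop:entropy_equality} is obtained from the preceding entropy equality by identifying the boundary integrand $\p_{ij}Z\,\p_i Z\,n_j$ as $v\cdot Dv\,n$ for $v=\nabla Z$ and applying Lemma~\ref{lem:boundary_h}, the tangency $\nabla Z\cdot\vec n=0$ coming from the no-flux condition after dividing by $\rho>0$. The paper treats this as an immediate consequence and gives no separate proof; your write-up simply makes the substitution explicit, including the sign check.
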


In order to prove that this functional provides a priori estimates on the solution, it is necessary to bound the right-hand side. Here, we assume that we already have $L^{\infty}((0,T)\times\Omega)$ estimates on $\rho$. There are three terms on the right-hand side. The last one is the boundary term and has the good sign since we assume that the domain is convex and therefore $h$ is convex. The first term is the most challenging one to manage. The strategy is to estimate the norm of $D^{2} u$ in $L^{\infty}$ and then write

\begin{equation*} \left|\int_{\Omega} \rho |\nabla Z|^{p-2} \nabla Z\cdot D^2 u[\rho] \nabla Z \diff x\right| \leq \|D^2 u[\rho]\|_{L^{\infty}} \int_{\Omega} \rho |\nabla Z|^{p}\diff x. \end{equation*}

On the right-hand side, we recognize the functional $\mathcal{F}_{p}$, which allows us to apply Gronwall's lemma. The main difficulty, however, lies in the fact that we only have a priori estimates for $\rho$ in $L^{\infty}$, which implies that $D^{2}u$ belongs to BMO but not to $L^{\infty}$. Nevertheless, to use Gronwall's lemma, we can tolerate having $\mathcal{F}_{p}(t) \log \mathcal{F}_{p}(t)$ on the right-hand side. Indeed consider the following differential inequality for the function $y(t)$:
$$
y' \le y \log y.
$$
In the variable $z = \log y$, this inequality becomes
$$
z' \le z,
$$
which implies the estimate \(z(t) \leq z_0 e^{t}\). Therefore, we obtain the estimate $y(t) \leq y_0^{e^{t}}$. This approach is particularly useful in Section~\ref{sect:JKO}, in the discrete setting of the JKO scheme, where the analogous differential inequality is obtained at the discrete level. The logarithmic term appearing on the right-hand side is found  with the previous functional inequality, Theorem~\ref{thm:functional_inequality}.

We are now in position to prove that the right-hand side terms can be controlled.

\begin{proposition}\label{prop:rhs}
Let $\rho$ be  {\color{blue} a smooth positive function and $u$ such that $-\Delta u =\rho$ in $\Omega$ and $u=0$ on $\partial\Omega$}. Let $T_1$ and $T_2$ be the first two terms on the right-hand side of the equation~\eqref{eq:ODE_FP}. Let {\color{blue}$p>d$ and $\alpha<1-\f dp$}. Then there exists $C$, depending on $\|\rho(t)\|_{L^{\infty}},p,d$ such that
\begin{align*}
&|T_1| \le C \mathcal{F}_{p}[\rho](t)(1+\log (\mathcal{F}_{p}[\rho](t)+1)),  \\
&|T_2| \le C\mathcal{F}_{p}[\rho](t).
\end{align*}
\end{proposition}

Before proving this proposition, we need the following lemma
\begin{lemma}\label{lem:rho_W1p}
Let $\rho$ be a {\color{blue}smooth positive function and $u$ such that $-\Delta u =\rho$ in $\Omega$ and $u=0$ on $\partial\Omega$}. Then
$$
\|\rho(t)\|_{W^{1,p}}\le C (\mathcal{F}_{p}[\rho](t)^{1/p} +1),
$$
where the constant $C$ depends on $\|\rho(t)\|_{L^{\infty}}$.
\end{lemma}

\begin{proof}
We write 
\begin{align*}
\int_{\Omega} |\nabla \rho|^{p}\diff x&= \int_{\Omega} \left|\f{\nabla\rho}{\rho}\right|^{p}\rho^{p-1}\rho\diff x\\
&\le \|\rho(t)\|_{L^{\infty}}^{p-1}\int_{\Omega} \left|\f{\nabla\rho}{\rho}\right|^{p}\rho \diff x\\
& \le 2^p\|\rho(t)\|_{L^{\infty}}^{p-1}\int_{\Omega} \left|\f{\nabla\rho}{\rho} - \nabla u[\rho]\right|^{p}\rho \diff x+ 2^p\|\rho(t)\|_{L^{\infty}}^{p-1} \int_{\Omega} |\nabla u[\rho]|^{p}\rho\diff x\\
&\le 2^p\|\rho(t)\|_{L^{\infty}}^{p-1} \mathcal{F}_{p}(t) + 2^p \|\rho(t)\|_{L^{\infty}}^{p} \|\nabla u[\rho]\|_{L^{p}}^{p}.
\end{align*}
From the first to the second line, we estimated $\rho(t)^{p-1}$ by its $L^{\infty}$ norm. In the third line we used $|x+y|^p \le 2^{p}(|x|^{p} + |y|^{p})$. In the fourth line we have estimated $\rho(t)$ by its $L^{\infty}$ norm.
Next we observe  that by Poincaré inequality and the Calderón-Zygmund estimate $\|D^{2}u[\rho]\|_{L^{p}}\le C\|\Delta u[\rho]\|_{L^{p}}$: 
$$
\|\nabla u[\rho]\|_{L^{p}}\le C\|D^{2}u[\rho]\|_{L^{p}} \le C\|\Delta u[\rho]\|_{L^{p}} = C\|\rho\|_{L^{p}}\le C \|\rho\|_{L^{\infty}}.
$$
{\color{blue}The first inequality follows by Poincaré-Wirtinger inequality, using Dirichlet boundary conditions.}
Therefore 

\begin{align*}
\|\rho(t)\|_{W^{1,p}} &= \|\rho(t)\|_{L^p} + \|\nabla \rho(t)\|_{L^{p}}\\
&\le \|\rho(t)\|_{L^\infty} + 2C \|\rho(t)\|_{L^\infty}^{(p+1)/p} + 2C \|\rho(t)\|_{L^\infty}^{(p-1)/p}\mathcal{F}_{p}(t)^{1/p}.
\end{align*}
\end{proof}

\begin{proof}[Proof of the Proposition~\ref{prop:rhs}]
We start with $T_1$. 
$$
|T_1| \le p \|D^{2} u[\rho]\|_{L^{\infty}}\int_{\Omega} \rho |\nabla Z|^p\diff x = p\|D^{2} u[\rho]\|_{L^{\infty}}\mathcal{F}_{p}[\rho] . 
$$
Now using Corollary~\ref{cor:functional_inequality}, the fact that $\|\rho\|_{C^{0,\alpha}}\le C \|\rho\|_{W^{1,p}}$  and Lemma~\ref{lem:rho_W1p} we find
\begin{align*}
\|D^{2} u[\rho]\|_{L^{\infty}}&\le C(1+\|\rho\|_{L^{\infty}}( 1 + \log^{+}(C(\mathcal{F}_{p}[\rho]+1)))\\
&\le C ( 1+ \log(\mathcal{F}_{p}[\rho]+1))
\end{align*}
with a new constant $C$ depending on $\|\rho\|_{L^{\infty}}$. We can conclude to the estimate on $T_1$ as announced.  We turn our attention on $T_2$.  By Hölder's inequality:
\begin{align*}
|T_2| &\le p\left(\int_{\Omega} \rho^{p/(p-1)}|\nabla Z|^p \diff x\right)^{1-1/p}\left(\int_{\Omega} |\nabla u[\p_t\rho]|^{p}\diff x\right)^{1/p}\\
&\le p\|\rho\|^{(p-1)/p^2}_{L^{\infty}} \mathcal{F}_{p}(t)^{1-1/p}\left(\int_{\Omega} |\nabla u[ \p_t\rho]|^{p}\diff x\right)^{1/p},
\end{align*}
where we have used $|\rho^{p/(p-1)}|\le \rho \|\rho\|^{1/p}_{L^{\infty}}$. The properties of the Keller-Segel kernel, the Calderón-Zygmund estimate Lemma~\ref{lem:Calderon_Zygmund} and the fact that $\p_t \rho = \Div(\rho\nabla Z)$ yield:
$$
\|\nabla u[\p_t \rho]\|_{L^{p}}\le C\|\rho\nabla Z\|_{L^{p}}\le C \|\rho\|^{1-1/p}_{L^{\infty}}\mathcal{F}_{p}(t)^{1/p}. 
$$
In the last line we used $|\rho|\le \rho^{1/p} \|\rho\|^{1-1/p}_{L^{\infty}}$. This ends the proof. 
\end{proof}

\begin{proof}[Proof of Theorem~\ref{thm:sobolev_bounds}]
{\color{blue}All of the computations can be carried rigorously in an approximation scheme as in~\cite{SUGIYAMA2006333}. We first find an $L^{\infty}$ bound by the Alikakos iteration method, as it is done in the JKO scheme below. At the continuous level, the proof is in fact simpler. Positivity follows by maximum principle. Then the theorem is a consequence of Propositions~\ref{prop:entropy_equality} and \ref{prop:rhs} as well as Gronwall's lemma.}.

\end{proof}

\section{Estimates at the discrete level: JKO scheme}\label{sect:JKO}

In this section, we prove Theorem~\ref{thm:JKO}. We first introduce few mathematical tools from optimal transport theory and the JKO scheme. The reader who is familiar with these topics may choose to skip the first subsection. For more details on these ideas, we refer to the books~\cite{santambrogio2015optimal,villani2021topics,villani2009optimal,ambrosio2008gradient}. In a second subsection, we review the main results from the literature concerning the JKO scheme in the context of the Keller-Segel model, as established in~\cite{dimarino} and state general lemmas. The last two subsections are dedicated to the proof of Theorem~\ref{thm:JKO}: first existence of the scheme and $L^{\infty}$ bounds, second the $L^{\infty}_{t}W^{1,p}_{x}$ estimates and finally the $L^{2}_{t}H^{2}_{x}$ strong convergence.

\subsection{Preliminaries on gradient flows and the JKO scheme}

 A gradient flow describes how a system evolves by following the steepest descent of a given functional. They can be formulated in metric spaces with the Wasserstein distance $W_2$ defined as

 \iffalse The simplest case of a gradient is in the Euclidean space $\mathbb{R}^d$. Given a smooth, convex functional $F: \mathbb{R}^d \to \mathbb{R}$, a gradient flow corresponds to the following Cauchy problem:

\[
\begin{cases}
    \dot{x}(t) = -\nabla F(x(t)) & \text{for } t > 0, \\
    x(0) = x_0.
\end{cases}
\]

This solution of this equation $x(t)$ starts from $x_{0}$ and moves toward the minimizer of $F$ over time. The solution is unique if the gradient $\nabla F$ satisfies Lipschitz continuity or if $F$ is convex. An important aspects of gradient flows is that they can be discretized in time with the so-called minimizing movement scheme. This scheme is defined as follows:  For a small time step $\tau > 0$, we define the sequence $\{x_{n}^{\tau}\}_n$ by $x^{\tau}_{0}=x_0$ and :

\[
x_{n+1}^{\tau} \in \arg\min_x \left( F(x) + \frac{|x - x_n^{\tau}|^2}{2\tau} \right). 
\]

Observe that the optimality condition on $x_{n+1}^{\tau}$ (that is the gradient of the previous functional at $x_{n+1}^{\tau}$ vanishes) yields exactly the implicit Euler scheme. From this we can construct the interpolation curve $x_{\tau}$ which is constant and equal to $x_n$ on $(n\tau, (n+1)\tau]$ that approximates the gradient flow $x(t)$ as $\tau \to 0$. This time-discretization approach is also useful for numerical simulations. \\ 

Beyond the Euclidean setting, gradient flows
\fi

$$
W_{2}^2(\rho,\eta) = \inf_{T:\Omega\to\Omega}\left\{\int_{\Omega}|x-T(x)|^2\diff \rho, \, T\#\rho=\eta\right\}.
$$

More generally we define the $p$-Wasserstein distance as

$$
W_{p}^p(\rho,\eta) = \inf_{T:\Omega\to\Omega}\left\{\int_{\Omega}|x-T(x)|^p\diff \rho, \, T\#\rho=\eta\right\}.
$$

The inf is a problem introduced by Monge in~\cite{monge1781memoire} and reformulated by Kantorovich in~\cite{kantorovich1942transfer} in a convex form. Kantorovich also provided a dual formulation for the Wasserstein distance
$$
\f{1}{2}W_{2}^2(\rho,\eta) = \sup_{\varphi(x)+\psi(y)\le \f{1}{2}|x-y|^2}\left\{\int_{\Omega}\varphi\diff \rho +\int_{\Omega}\psi \diff \eta\right\}.
$$

The optimal $\varphi$ is called the Kantorovich potential, it is Lipschitz continuous and $\f{|x|^2}{2}-\varphi$ is a convex function as a result of the Brenier theorem~\cite{brenier1987decomposition,brenier1991polar}. It is linked to the optimal transport map $T$ by $T(x) = x - \nabla\varphi(x)$. In particular, using the dual formulation, one can deduce that the first variation of the Wasserstein distance with respect to $\rho$ is $\varphi$.

The general form of the gradient flow PDE for a functional $F:\mathcal{P}(\Omega)\to \R\cup\{+\infty\}$ in Wasserstein space is given by:

$$
\begin{cases}
&\partial_t \rho - \Div  \left( \rho \nabla \frac{\delta F}{\delta \rho} \right) = 0,\\
&\rho(0,\cdot) = \rho_{0},
\end{cases}
$$

where $\frac{\delta F}{\delta \rho}$ is the first variation of the functional $F$ with respect to the probability measure $\rho$. The corresponding minimizing movement scheme is defined as a sequence $\{\rho_\tau^n\}_{n}$ such that $\rho_{\tau}^{0}= \rho_{0}$ and

\[
\rho_\tau^{n+1} \in \arg\min_{\rho} \left\{ F(\rho) + \frac{W_2^2(\rho, \rho_\tau^n)}{2\tau} \right\}, 
\]

that we write $\rho^{n+1}_{\tau}\in Prox_{F}^{\tau}[\rho^{n}_{\tau}]$. This sequence can then be used to define a curve \( t \mapsto \rho_\tau(t) \) in the space of probability measures, with \(\rho_\tau(0) = \rho_0\) and 

\[
\rho_\tau(t) = \rho_\tau^{n+1} \text{ for } t \in (n\tau, (n+1)\tau].
\]

Under suitable conditions on $F$, the scheme converges to a solution of the corresponding gradient flows. This scheme, known as the JKO scheme, has first been used in the context of the Fokker-Planck equation~\cite{MR1617171}, and is now a common approach to prove the existence of weak solutions to a PDE. In this setting it is possible to prove that the optimal condition on the measure $\rho_{\tau}^{n+1}$ is 
$$
\f{\varphi}{\tau} +\frac{\delta F}{\delta \rho}(\rho^{n+1}_{\tau}) = C \quad a.e. \text{ on $\{\rho^{n+1}_{\tau}>0\}$},
$$
where $C$ is a constant and $\varphi$ is the Kantorovich potential in the transport from $\rho^{n+1}_{\tau}$ to $\rho^{n}_{\tau}$. $\rho^{n+1}_{\tau}$ is also linked to $\rho^{n}_{\tau}$ by the Monge-Ampère equation
$$
\rho^{n+1}_{\tau}(x) = \rho^{n}_{\tau}(x-\nabla\varphi(x)) \det(I - D^{2}\varphi(x)). 
$$

\subsection{The JKO scheme in the Keller-Segel system and useful lemmas}

The JKO scheme for the Keller-Segel system has been studied in~\cite{blanchet-ks-gradient-2013,blanchet2013gradient,blanchet2008convergence}. More estimates of the scheme and in particular $L^{\infty}$, and Sobolev estimates were studied in~\cite{dimarino} in dimension 2. {\color{blue}In higher dimensions, even proving existence of the minimizers of the JKO scheme proves to be difficult. In dimension 2, the proof uses the Logarithmic Hardy-Littlewood-Sobolev inequality which states that we can control the entropy $\rho\mapsto\int\rho\log\rho$ from the bound on the free energy (that is the aggregation term is smaller than the entropy). From this estimate we deduce weak compactness in $L^1$ of the minimizing sequence which is enough. In higher dimensions, such an inequality does not hold anymore as the aggregation term (coming from $-\Delta u =\rho$) is not the same anymore. In fact one would need to have a priori $L^p$ bounds for some $p$ on the solution. This is obtained by introducing the penalizing scheme~\eqref{eq:def_JKO}. 

\begin{proposition}\label{prop:existence_JKO}
 Let $\Omega\subset\R^d$.   Then the sequence $\rho^{n+1}_{\tau}\in Prox_{\widetilde{\mathcal{J}}}^{\tau}(\rho^{n}_{\tau})$ starting from $\rho_0$ is well defined, with $\rho_{0}$ as in Theorem~\ref{thm:JKO}.   
\end{proposition}

In order to prove Proposition~\ref{prop:existence_JKO} we need the following lemma written in~\cite[Lemma 5.10]{dimarino}, and taken from~\cite[Theorem 10.15]{giusti2003direct}.
\begin{lemma}\label{lem:sobone2} Let $\Omega$ be a bounded convex domain, and $f \in{(W^{1,p}_0)^*(\Omega)}$ be given. Let $u$ the unique solution in $W^{1,q}_0(\Omega)$ (with $\f{1}{p} + \f{1}{q}=1$ the dual exponent to $p$) of $-\Delta u=f$. Then there exists a constant $C>0$, depending only on the dimension, on $p$ and possibly on $\Omega$, such that
$$  \| \nabla u \|_{q} \leq  C\| f \|_{(W^{1,p})^*(\Omega)} . $$
\end{lemma}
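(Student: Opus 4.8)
The plan is to prove the a priori estimate by a duality argument that reduces the statement with distributional datum $f\in (W^{1,p}_0)^*$ to the $W^{1,p}$-regularity of the Dirichlet Laplacian with a right-hand side in divergence form, a property classical on convex domains. First I would fix the weak formulation: $u\in W^{1,q}_0(\Omega)$ solves $-\Delta u=f$ when
$$
\int_\Omega \nabla u\cdot\nabla v\,\diff x=\duality{f}{v},\qquad \forall v\in W^{1,p}_0(\Omega),
$$
which is meaningful since $\nabla u\in L^q$, $\nabla v\in L^p$ and $f\in(W^{1,p}_0)^*$. Existence and uniqueness of such a $u$ follow from the $W^{1,q}$-theory for the Laplacian on the bounded convex domain $\Omega$ (equivalently, $-\Delta:W^{1,q}_0\to(W^{1,p}_0)^*$ is an isomorphism); alternatively one approximates $f$ by smooth data, solves $-\Delta u_k=f_k$ with $u_k\in W^{2,q}\cap W^{1,q}_0$ by elliptic regularity, and passes to the limit using the bound derived below.

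For the estimate I would use the dual characterisation of the $L^q$ norm of a vector field,
$$
\|\nabla u\|_q=\sup\left\{\int_\Omega \nabla u\cdot G\,\diff x:\ G\in L^p(\Omega;\R^d),\ \|G\|_p\le 1\right\}.
$$
Fixing such a $G$, I introduce the auxiliary potential $w\in W^{1,p}_0(\Omega)$ solving $-\Delta w=-\Div G$ weakly, i.e.
$$
\int_\Omega \nabla w\cdot\nabla v\,\diff x=\int_\Omega G\cdot\nabla v\,\diff x,\qquad \forall v\in W^{1,q}_0(\Omega),
$$
together with the crucial bound $\|\nabla w\|_p\le C\|G\|_p$. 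By construction the field $G-\nabla w$ is divergence free against $W^{1,q}_0$ test functions; testing with $v=u\in W^{1,q}_0$ gives $\int_\Omega(G-\nabla w)\cdot\nabla u=0$, hence
$$
\int_\Omega \nabla u\cdot G\,\diff x=\int_\Omega \nabla u\cdot\nabla w\,\diff x=\duality{f}{w},
$$
the last equality being the weak formulation for $u$ with the admissible test function $w\in W^{1,p}_0$. Estimating by the dual norm, Poincaré's inequality, and the auxiliary bound,
$$
\int_\Omega \nabla u\cdot G\,\diff x\le \|f\|_{(W^{1,p})^*}\,\|w\|_{W^{1,p}}\le C\|f\|_{(W^{1,p})^*}\,\|\nabla w\|_p\le C\|f\|_{(W^{1,p})^*}\,\|G\|_p,
$$
and taking the supremum over $\|G\|_p\le 1$ yields $\|\nabla u\|_q\le C\|f\|_{(W^{1,p})^*}$.

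The hard part will be the auxiliary inequality $\|\nabla w\|_p\le C\|G\|_p$ for $-\Delta w=-\Div G$ with homogeneous Dirichlet data: this is exactly where the geometry of $\Omega$ enters, and it is the content of the cited \cite[Theorem 10.15]{giusti2003direct}. On a bounded convex domain it is classical — it can be obtained from the Calderón-Zygmund $W^{2,p}$ estimate of Lemma~\ref{lem:Calderon_Zygmund} combined with duality — and the constant depends only on $d$, $p$ and $\Omega$, which matches the statement. Everything else (the duality pairing, the divergence-free cancellation, and Poincaré) is routine.
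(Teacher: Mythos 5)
Your argument is correct, but note that the paper does not actually prove this lemma: it is quoted verbatim from \cite[Lemma 5.10]{dimarino} and ultimately from \cite[Theorem 10.15]{giusti2003direct}, so there is no internal proof to compare against. What you have written is a clean and standard way to \emph{derive} the dual-space statement from the genuinely hard analytic input, namely the $W^{1,p}$ estimate $\|\nabla w\|_{p}\le C\|G\|_{p}$ for the divergence-form problem $-\Delta w=-\Div G$ with Dirichlet data; that divergence-form estimate is precisely the content of the cited theorem of Giusti, so your proof is a legitimate reduction to the cited black box rather than a proof from scratch. The duality step itself (testing the equation for $w$ with $u$ to get $\int_\Omega \nabla u\cdot G=\duality{f}{w}$, then taking the supremum over $\|G\|_p\le 1$) is correct, and it also yields uniqueness of the $W^{1,q}_0$ solution as a byproduct. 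The alternative, more direct route is to represent $f\in (W^{1,p}_0)^*$ as $f=-\Div F$ with $\|F\|_{q}\le C\|f\|_{(W^{1,p})^*}$ and apply the divergence-form estimate at exponent $q$ rather than $p$; your version avoids the representation lemma at the price of needing the estimate at the dual exponent, which is immaterial since the cited result holds for all $1<p<\infty$. One caveat on your closing remark: the auxiliary bound does \emph{not} follow from the $W^{2,p}$ Calder\'on--Zygmund estimate of Lemma~\ref{lem:Calderon_Zygmund} ``combined with duality'' in any direct way, since $\Div G$ is not in $L^p$ and duality applied to the divergence-form problem merely exchanges $p$ and $q$, returning the same type of estimate; one genuinely needs the singular-integral (or Giusti) argument there. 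Since you defer to the citation for that input anyway, this does not affect the validity of your proof.
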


\begin{proof}[Proof of Proposition~\ref{prop:existence_JKO}]
For $d=2$ the claim is classical, see e.g.~\cite{dimarino}. We therefore assume throughout that
$d\ge 3$, and we fix $n\in\mathbb{N}$ and $\tau>0$.
Recall that the JKO step is the minimization problem
\[
\rho \ \longmapsto\ \frac1{2\tau}W_2^2(\rho,\rho_\tau^n)\;+\;\widetilde{\mathcal{J}}[\rho].
\]

\medskip

\underline{Step 1: The aggregation term.}
Integrating by parts gives the identity
\[
\int_\Omega |\nabla u|^2\,dx  =  \int_\Omega \rho\,u\,dx.
\]
Using~\eqref{eq:estimate_H1_Ld_rewritten}, that uses Lemma~\ref{lem:sobone2}, $\Omega$ is bounded and $\frac d2\ge \frac{2d}{d+2}$ we obtain:
\begin{equation}\label{eq:estimate_H1_Ld_rewritten2}
\int_\Omega \rho\,u \le  C_0\,\|\rho\|_{L^{\frac d2}(\Omega)}^2,
\end{equation}

Estimate \eqref{eq:estimate_H1_Ld_rewritten2} is the key point: the interaction term can be
controlled provided one has an a priori $L^{d/2}$ bound on $\rho$.
Such a bound is expected to be
propagated (and in fact nonincreasing) in the Keller-Segel system~\cite{MR2099126}. We therefore enforced it
in the penalized minimization which, as we will  see later, is just artificial and not seen in the scheme.

\medskip

\underline{Step 2: Uniform bounds for a minimizing sequence.}
Let $(\rho_k)_k$ be a minimizing sequence for the above problem. Since $\rho_\tau^n$ is an admissible competitor, we have
\[
\frac1{2\tau}W_2^2(\rho_k,\rho_\tau^n) + \widetilde{\mathcal{J}}[\rho_k]
 \le 
\widetilde{\mathcal{J}}[\rho_\tau^n] 
\]
In particular, $(\widetilde{\mathcal{J}}[\rho_k])_k$ is bounded from above.

We claim that $(\rho_k)_k$ is bounded in $L^{d/2}(\Omega)$. Indeed:
\begin{itemize}
\item If $C_0\|\rho_k\|_{L^{\frac d2}}^2\le M_d$, then 
\begin{equation}\label{eq:JKO_Ld2_1}
 \|\rho_k\|_{L^{\frac d2}}^2\le M_d/C_0   .
\end{equation}
\item If $C_0\|\rho_k\|_{L^{\frac d2}}^2> M_d$, then the penalization is active and
\[
\widetilde{\mathcal{J}}[\rho_k]
=
\mathcal{J}[\rho_k] + \frac{1}{\nu}\left(C_0\|\rho_k\|_{L^{\frac d2}}^2 - M_d\right).
\]
Using the definition of $\mathcal{J}$ and \eqref{eq:estimate_H1_Ld_rewritten}, we get
\[
\widetilde{\mathcal{J}}[\rho_k]
 \ge 
\int_\Omega \rho_k\log\rho_k
 - \frac12\int_\Omega \rho_k u_k
 + \frac{1}{\nu}\left(C_0\|\rho_k\|_{L^{\frac d2}}^2 - M_d\right)
 \ge 
\int_\Omega \rho_k\log\rho_k
 + \left(\frac 1\nu-\frac{C_0}{2}\right)\|\rho_k\|_{L^{\frac d2}}^2 - \frac 1\nu M_d,
\]
where $u_k=u_{\rho_k}$. We can assume that $\nu\le \frac{1}{C_0}$ so that $\frac 1\nu-\frac{C_0}{2}\ge\frac{1}{2\nu}$.  Since $\int_\Omega \rho_k\log\rho_k$ is bounded below on a bounded domain, the upper bound on $\widetilde{\mathcal{J}}[\rho_k]$ implies a uniform bound 
\begin{equation}\label{eq:JKO_Ld2_2}
\|\rho_k\|_{L^{\frac d2}}^2\le C\nu+ C\nu \widetilde{\mathcal{J}}[\rho_\tau^n]+ M_d.
\end{equation}
\end{itemize}
Therefore in all cases $(\rho_k)_k$ is uniformly bounded in $L^{d/2}(\Omega)$.

Moreover, again by \eqref{eq:estimate_H1_Ld_rewritten} and the uniform $L^{d/2}$ bound, the
interaction term $\int\rho_k u_k$ is uniformly bounded. Since $\widetilde{\mathcal{J}}[\rho_k]$
is bounded from above, we also obtain a uniform upper bound on the entropy
$\int_\Omega \rho_k\log\rho_k$ (up to an additive constant depending on $M$ and $\Omega$).
In particular, $(\rho_k)_k$ is uniformly integrable.

\medskip

\underline{Step 3: Compactness and convergence.}
Up to extracting a subsequence, we may assume
\[
\rho_k \rightharpoonup \rho \quad\text{weakly in }L^{\frac d2}(\Omega),
\qquad\text{and}\qquad
\rho_k \to \rho \quad\text{narrowly as measures.}
\]
Let $u_k$ solve $-\Delta u_k=\rho_k$ with $u_k=0$ on $\partial\Omega$.
From \eqref{eq:estimate_H1_Ld_rewritten} and the uniform $L^{d/2}$ bound, we get a uniform bound
on $\|\nabla u_k\|_{L^2}$, hence on $\|u_k\|_{H_0^1(\Omega)}$.
By Rellich--Kondrachov,
\[
u_k \to u \quad\text{strongly in }L^q(\Omega)\quad\text{for every }q<\frac{2d}{d-2},
\]
in particular for $q=\frac{d}{d-2}$.
Passing to the limit in the weak formulation shows that $u$ is the Dirichlet solution of
$-\Delta u=\rho$.

Since $\rho_k\rightharpoonup\rho$ in $L^{\frac d2}$ and $u_k\to u$ strongly in
$L^{\frac{d}{d-2}}$, we conclude that
\[
\int_\Omega \rho_k\,u_k  \longrightarrow  \int_\Omega \rho\,u.
\]

\medskip

\underline{Step 4: Lower semicontinuity and existence of a minimizer.}
The map $\rho\mapsto \frac1{2\tau}W_2^2(\rho,\rho_\tau^n)$ is lower semicontinuous
with respect to narrow convergence on the bounded set $\Omega$.
The entropy is lower semicontinuous with respect to $L^1$--weak convergence (and we have uniform
integrability), and the penalty term is lower semicontinuous by convexity of
$\rho\mapsto \|\rho\|_{L^{\frac d2}}^2$.
Together with the convergence of $\int\rho_k u_k$, we can pass to the limit along the minimizing
sequence and obtain that $\rho$ is a minimizer of the penalized problem.
\end{proof}

Before proving the $L^{\infty}$ bound we prove the positivity of the sequence from the JKO scheme. This follows since $\rho\mapsto\int_{\Omega}\rho\log\rho$  has an infinite slope at 0 and therefore is a barrier against vanishing sets of positive Lebesgue measure.

\begin{lemma}[Strict positivity of the JKO scheme]\label{lem:strict_positivy}
Let $\rho^{n+1}_{\tau}\in Prox_{\widetilde{\mathcal{J}}}^{\tau}(\rho^{n}_{\tau})$ be the penalized JKO sequence
associated to the Keller-Segel system, with initial density $\rho_0$ as in Theorem~\ref{thm:JKO}.
Then $\rho^n_\tau>0$ almost everywhere in $\Omega$ for every $n\ge 0$.
\end{lemma}

\begin{proof}
Fix $n\ge 0$ and denote
\[
\mu:=\rho_\tau^n,
\qquad
\rho:=\rho_\tau^{n+1}\in Prox_{\widetilde{\mathcal J}}^\tau(\mu).
\]
That is, $\rho$ minimizes the penalized JKO scheme
\begin{equation}\label{eq:def_JKO_penalized}
\widetilde{\mathcal J}_\tau(\sigma\,|\,\mu)
:=\frac{1}{2\tau}W_2^2(\sigma,\mu)
+\mathcal J[\sigma]
+\frac1\nu\bigl(C_0\|\sigma\|_{L^{d/2}(\Omega)}-M_d\bigr)_+,
\end{equation}
among all densities $\sigma\ge 0$ with $\int_\Omega \sigma\,dx=M$, where $M$ is the mass of $\rho_0$ (and $\mu$).

We argue by contradiction. Assume that the zero set
\[
A:=\{x\in\Omega:\rho(x)=0\}
\]
has positive measure, $|A|>0$.

Let $\eta\equiv M/|\Omega|$ be the constant density with mass $M$.
For $\varepsilon\in(0,1)$ define the convex perturbation
\[
\rho_\varepsilon:=(1-\varepsilon)\rho+\varepsilon\eta.
\]
Then $\rho_\varepsilon\ge \varepsilon\eta>0$ a.e., $\rho_\varepsilon$ has the same mass $M$, and therefore it is an admissible
competitor in~\eqref{eq:def_JKO_penalized}. By minimality of $\rho$,
\begin{equation}\label{eq:minimality_start}
0\le \widetilde{\mathcal J}_\tau(\rho_\varepsilon\,|\,\mu)-\widetilde{\mathcal J}_\tau(\rho\,|\,\mu).
\end{equation}
We show that the right-hand side is strictly negative for $\varepsilon>0$ small enough, yielding a contradiction.

\medskip
\underline{Step 1: Wasserstein term.}
We use convexity of $W_2^2(\,\cdot\,,\mu)$ with respect to the first argument.
Let $\gamma_\rho$ be an optimal transport plan between $\rho$ and $\mu$, and $\gamma_\eta$ an optimal plan between $\eta$ and $\mu$.
Then
\[
\gamma_\varepsilon:=(1-\varepsilon)\gamma_\rho+\varepsilon\gamma_\eta
\]
is a transport plan between $\rho_\varepsilon$ and $\mu$, hence
\begin{align*}
W_2^2(\rho_\varepsilon,\mu)
&\le \int_{\Omega\times\Omega}|x-y|^2\,d\gamma_\varepsilon(x,y)\\
&=(1-\varepsilon)\int |x-y|^2\,d\gamma_\rho+\varepsilon\int |x-y|^2\,d\gamma_\eta\\
&=(1-\varepsilon)W_2^2(\rho,\mu)+\varepsilon W_2^2(\eta,\mu).
\end{align*}
Therefore
\begin{equation}\label{eq:W2_Oeps}
\frac{1}{2\tau}\Bigl(W_2^2(\rho_\varepsilon,\mu)-W_2^2(\rho,\mu)\Bigr)
\le \frac{\varepsilon}{2\tau}\,W_2^2(\eta,\mu)
=C\,\varepsilon,
\end{equation}

\medskip
\underline{Step 2: Aggregation term.}
Denote the Keller--Segel interaction energy by
\[
I(\sigma):=-\frac12\int_\Omega \sigma\,u[\sigma]\,dx,
\]
Expanding,
\begin{align*}
I(\rho_\varepsilon)=(1-\varepsilon)^2 I(\rho)
-\frac{\varepsilon(1-\varepsilon)}2\int_\Omega \rho\,u[\eta]\,dx
-\frac{\varepsilon(1-\varepsilon)}2\int_\Omega \eta\,u[\rho]\,dx
+\varepsilon^2 I(\eta).
\end{align*}
Define the cross term
\[
B(\rho,\eta):=-\frac12\int_\Omega \rho\,u[\eta]\,dx=-\frac12\int_\Omega \eta\,u[\rho]\,dx.
\]
Then we obtain

\begin{align}
I(\rho_\varepsilon)-I(\rho)=2\varepsilon\bigl(B(\rho,\eta)-I(\rho)\bigr)+O(\varepsilon^2).\label{eq:I_diff_Oeps}
\end{align}
Note that~\eqref{eq:JKO_Ld2_1}-\eqref{eq:JKO_Ld2_2} yield $L^{d/2}(\Omega)$ bounds on $\rho$, and $\eta$ is bounded in $L^{d/2}(\Omega)$ by definition. Thus
\[
|I(\rho)|\le C\|\rho\|_{L^{d/2}}^2,\qquad |B(\rho,\eta)|\le C\|\rho\|_{L^{d/2}}\|\eta\|_{L^{d/2}},
\]
so the right-hand side of~\eqref{eq:I_diff_Oeps} is bounded by $C\varepsilon$ for a constant $C$ independent of $\varepsilon$:
\begin{equation}\label{eq:I_Oeps}
I(\rho_\varepsilon)-I(\rho)\le C\varepsilon.
\end{equation}

\medskip
\underline{Step 3: Penalization term.}
Define the penalty functional
\[
P(\sigma):=\frac{1}{\nu}\bigl(C_0\|\sigma\|_{L^{d/2}(\Omega)}-M_d\bigr)_+.
\]
Then
\begin{align}
P(\rho_\varepsilon)-P(\rho)
&\le C\|\rho_\varepsilon-\rho\|_{L^{d/2}}
= C\|\varepsilon(\eta-\rho)\|_{L^{d/2}}
= C\,\varepsilon.
\label{eq:penalty_Oeps}
\end{align}
\medskip
\underline{Step 4: Entropy.}
Let $f(s)=s\log s$ with the convention $f(0)=0$.
We estimate $\int_\Omega (f(\rho_\varepsilon)-f(\rho))$ by splitting $\Omega=A\cup(\Omega\setminus A)$.

\smallskip
On $A$:
Since $\rho=0$ on $A$, we have $\rho_\varepsilon=\varepsilon\eta$ on $A$, hence
\begin{align}
\int_A f(\rho_\varepsilon)\,dx
&=\int_A \varepsilon\eta\log(\varepsilon\eta)\,dx
=\varepsilon\log\varepsilon\int_A \eta\,dx+\varepsilon\int_A \eta\log\eta\,dx.
\label{eq:entropy_on_A}
\end{align}

On $\Omega\setminus A$:
By convexity of $f$ on $[0,\infty)$,
\[
f\bigl((1-\varepsilon)\rho+\varepsilon\eta\bigr)\le (1-\varepsilon)f(\rho)+\varepsilon f(\eta),
\]
hence
\[
f(\rho_\varepsilon)-f(\rho)\le \varepsilon\bigl(f(\eta)-f(\rho)\bigr).
\]
Integrating over $\Omega\setminus A$ gives
\begin{equation}\label{eq:entropy_off_A}
\int_{\Omega\setminus A}\bigl(f(\rho_\varepsilon)-f(\rho)\bigr)\,dx
\le \varepsilon\int_{\Omega\setminus A}\bigl(f(\eta)-f(\rho)\bigr)\,dx
\le C\,\varepsilon,
\end{equation}
the last estimate following from~\eqref{eq:JKO_Ld2_1}-\eqref{eq:JKO_Ld2_2}.

Combining~\eqref{eq:entropy_on_A} and~\eqref{eq:entropy_off_A}, we obtain
\begin{equation}\label{eq:entropy_total}
\int_\Omega \rho_\varepsilon\log\rho_\varepsilon-\rho\log\rho\,dx
\le
\varepsilon\log\varepsilon\int_A \eta\,dx + C\,\varepsilon,
\end{equation}
for some constant $C$ independent of $\varepsilon$.

\medskip
\underline{Step 5: Contradiction.}
Putting everything together, we obtain
\[
0\le
\widetilde{\mathcal J}_\tau(\rho_\varepsilon\,|\,\mu)-\widetilde{\mathcal J}_\tau(\rho\,|\,\mu)
\le
\varepsilon\log\varepsilon\int_A \eta\,dx + C\varepsilon,
\]
for some constant $C$ independent of $\varepsilon$. Since $\eta\equiv M/|\Omega|$, we have
\[
\int_A \eta\,dx=\frac{M}{|\Omega|}|A|>0.
\]
Hence, there exists $\varepsilon_0\in(0,1)$ such that the right-hand side is strictly negative for all
$\varepsilon\in(0,\varepsilon_0)$, contradicting~\eqref{eq:minimality_start}.
Therefore $|A|=0$, i.e.\ $\rho=\rho_\tau^{n+1}>0$ almost everywhere.
\end{proof}

Now we turn our attention on the $L^{\infty}$ bound of the sequence of the JKO scheme. We first prove an $L^{p_0}$ bound for some $p_0>d$. This is enough to use an Alikakos iteration method at the level of the JKO scheme.

\begin{proposition}\label{lem:Linfty_JKO}
 Let  $\rho^{n+1}_{\tau}\in Prox_{\widetilde{\mathcal{J}}}^{\tau}(\rho^{n}_{\tau})$ starting from $\rho_0$ the sequence of the JKO scheme as in Theorem~\ref{thm:JKO}. Then there exists  $M_d,\nu >0$ in the definition of $\widetilde{\mathcal{J}}$  and $\eps_{d}$ depending on the dimension and the final time of existence $T$ such that if $\|\rho_0\|_{L^{\frac{d}{2}}(\Omega)}\le \eps_d$:
\begin{itemize}
    \item $\|\rho_{\tau}^{n}\|_{L^{\frac d2}}^2\le \frac{M_d}{C_0}$ for some $C$ independent of $n$ and $\widetilde{J}[\rho_n]=J[\rho_n]$ so the artificial term disappears
    \item $\|\rho^{n}_{\tau}\|_{L^{\infty}}\le C(\|\rho_{0}\|_{L^{\infty}})$.
\end{itemize}
In particular, the interpolation curve $\rho_{\tau}$ remains bounded in $L^{\infty}((0,T)\times\Omega)$ uniformly in $\tau$.     
\end{proposition}

To prove this proposition, we need to dissipate some $L^p$ norms at the discrete level of the JKO scheme. A useful tool for that purpose in the flow interchange lemma. We state a version of our interest which can be found in~\cite{dimarino}. 

\begin{lemma}[Flow interchange lemma]\label{lem:lp_mccann}
Let $\Omega\subset\mathbb{R}^d$ and let $\rho,\eta$ be absolutely continuous densities on $\Omega$.
Assume that $\rho\in Prox_{\widetilde{\mathcal{J}}}^{\tau}(\eta)$, and let $\varphi$ be a Kantorovich potential for the optimal transport from $\rho$ to $\eta$.
Then, for every $p>1$,
\begin{equation}\label{eq:FL1}
\int_{\Omega}\frac{\eta^p}{p-1}\,dx
 \ge 
\int_{\Omega}\frac{\rho^p}{p-1}\,dx
 +\tau 
p\int_{\Omega}\rho^{p-2}\,|\nabla\rho|^2\,dx
-\tau \frac{1}{p-1}\int_{\Omega}\rho^{p+1}\,dx,
\end{equation}
and for every $k>0$ and every $p\ge\frac{4d}{3d+1}$,
\begin{equation}\label{eq:FL2}
\int_{\Omega}\frac{(\eta-k)_{+}^{p}}{p-1}\,dx
 \ge \int_{\Omega}\frac{(\rho-k)_{+}^{p}}{p-1}\,dx
 +\tau 
p\int_{\{\rho\ge k\}}\rho^{p-2}\,|\nabla\rho|^2\,dx
-\tau \frac{1}{p-1}\int_{\{\rho\ge k\}}\rho^{p+1}\,dx.
\end{equation}
The inequalities are understood under sufficient regularity so that all terms make sense.
\end{lemma}

\begin{remark}
Note that in the context of $\widetilde{\mathcal{J}}$, compared to ~\cite{dimarino}, the penalization adds another term on the right-hand side of~\eqref{eq:FL1} and~\eqref{eq:FL2}. But this term is nonnegative and therefore can be neglected. 
\end{remark}

\begin{proof}[Proof of Proposition~\ref{lem:Linfty_JKO}]

Fix $T>0$ and let $n\ge 0$ be such that $n\tau\le T$. Set
\[
\eta:=\rho_\tau^{n},\qquad
\rho:=\rho_\tau^{n+1}.
\]

Note that to apply the flow interchange lemma, we need to a priori to prove the densities have enough regularity. This regularity is assumed for the initial condition. Then the flow interchange lemma and its proof (see for instance~\cite{Lisini-CH-gradient-flow}) provides directly the necessary regularity on each steps, and the lemma can be applied in our case. Applying~\eqref{eq:FL1} in Lemma~\ref{lem:lp_mccann} with exponent $p>1$ gives
\begin{equation}\label{eq:FI_discrete_simplified_new}
\int_\Omega \frac{\eta^p}{p-1}\,dx
\ge
\int_\Omega \frac{\rho^p}{p-1}\,dx
+\tau p\int_\Omega \rho^{p-2}|\nabla\rho|^2\,dx
-\tau \frac{1}{p-1}\int_\Omega \rho^{p+1}\,dx .
\end{equation}

\medskip
\underline{Step 1: Estimate of $\|\rho_\tau^n\|_{L^{d/2}}$ and coincidence of the two schemes.}
Let $d>2$ and fix a Sobolev constant $C_S>0$ such that, for all admissible $f$,
\begin{equation}\label{eq:sobolev_inhom}
\|f\|_{L^{\frac{2d}{d-2}}(\Omega)}^2
\le C_S\Bigl(\|\nabla f\|_{L^2(\Omega)}^2+\|f\|_{L^2(\Omega)}^2\Bigr).
\end{equation}
We estimate the source term as follows:
\begin{align*}
\int_\Omega \rho^{p+1}\,dx
&=\int_\Omega \rho\,(\rho^{p/2})^2\,dx\\
&\le \|\rho\|_{L^{d/2}(\Omega)}\ \|\rho^{p/2}\|_{L^{\frac{2d}{d-2}}(\Omega)}^2
\qquad\text{\Big(H\"older with exponents $\frac d2$ and $\frac{d}{d-2}$\Big)}\\
&\le C_S\,\|\rho\|_{L^{d/2}(\Omega)}
\Bigl(\|\nabla(\rho^{p/2})\|_{L^2(\Omega)}^2+\|\rho^{p/2}\|_{L^2(\Omega)}^2\Bigr)
\qquad\text{(by~\eqref{eq:sobolev_inhom})}\\
&= C_S\,\|\rho\|_{L^{d/2}(\Omega)}
\Bigl(\|\nabla(\rho^{p/2})\|_{L^2(\Omega)}^2+\|\rho\|_{L^p(\Omega)}^p\Bigr),
\end{align*}

Moreover,
\begin{equation}\label{eq:grad_identity_new}
p\int_\Omega \rho^{p-2}|\nabla\rho|^2\,dx
=\frac{4}{p}\int_\Omega |\nabla(\rho^{p/2})|^2\,dx.
\end{equation}
Multiplying~\eqref{eq:FI_discrete_simplified_new} by $(p-1)$ and using~\eqref{eq:grad_identity_new} and the bound on
$\int\rho^{p+1}$, we obtain
\begin{align*}
\int_\Omega \eta^p\,dx
&\ge \int_\Omega \rho^p\,dx
+\tau (p-1)\frac{4}{p}\|\nabla(\rho^{p/2})\|_{L^2(\Omega)}^2
-\tau\,C_S\|\rho\|_{L^{d/2}(\Omega)}
\Bigl(\|\nabla(\rho^{p/2})\|_{L^2(\Omega)}^2+\|\rho\|_{L^p(\Omega)}^p\Bigr)\\
&=\Bigl(1-\tau C_S\|\rho\|_{L^{d/2}(\Omega)}\Bigr)\int_\Omega \rho^p\,dx
+\tau\Bigl(\frac{4(p-1)}{p}-C_S\|\rho\|_{L^{d/2}(\Omega)}\Bigr)\|\nabla(\rho^{p/2})\|_{L^2(\Omega)}^2.
\end{align*}
Rearranging gives the key inequality
\begin{equation}\label{eq:FI_discrete2_new}
\Bigl(1-\tau C_S\|\rho\|_{L^{d/2}(\Omega)}\Bigr)\int_\Omega \rho^p\,dx
\le
\int_\Omega \eta^p\,dx
+\tau\Bigl(C_S\|\rho\|_{L^{d/2}(\Omega)}-\frac{4(p-1)}{p}\Bigr)\|\nabla(\rho^{p/2})\|_{L^2(\Omega)}^2.
\end{equation}
Let $p_0>d$. Using~\eqref{eq:JKO_Ld2_1}, \eqref{eq:JKO_Ld2_2}, together with the fact that the energy is nonincreasing along the JKO steps, i.e. $\widetilde{\mathcal{J}}[\rho_\tau^{n+1}]\le\widetilde{\mathcal{J}}[\rho_\tau^{n}]$ for all $n$, we have that for all $n$:
\begin{equation}\label{eq:Ld2_apriori_new}
\|\rho_\tau^{n}\|_{L^{d/2}(\Omega)}^2
\le
\frac{\nu}{C_0}\,\widetilde{\mathcal J}[\rho_0]+\frac{\nu}{C_0}M_d+\frac{1}{C_0}M_d
=:(C_{\nu,M_d})^2.
\end{equation}

Choose $p_0>d$ and assume that the parameters $\nu,M_d$ are such that
\begin{equation}\label{eq:smallness_choice_new}
C_S\,C_{\nu,M_d}\le \frac{4}{p_0}.
\end{equation}
Then for every $p\in(1,p_0]$ we have
\[
C_S\|\rho\|_{L^{d/2}}\le C_S C_{\nu,M_d}\le \frac{4(p-1)}{p},
\]
so that the second term on the right-hand side of~\eqref{eq:FI_discrete2_new} is nonpositive and can be dropped.

Assuming also $\tau$ is small enough such that $\tau(p_0-1)C_S C_{\nu,M_d}<1$, we conclude that for every $p\in(1,p_0]$,
\begin{equation*}
\int_\Omega \rho^p\,dx
\le \frac{1}{1-\tau C_S C_{\nu,M_d}}\int_\Omega \eta^p\,dx.
\end{equation*}
Iterating this inequality along the scheme gives, for $n\tau\le T$,
\begin{align*}
\|\rho_\tau^{n}\|_{L^p(\Omega)}^p
&\le \Bigl(1-\tau C_S C_{\nu,M_d}\Bigr)^{-n}\,\|\rho_0\|_{L^p(\Omega)}^p \\
&=\exp\!\Bigl(-n\log\bigl(1-\tau C_S C_{\nu,M_d}\bigr)\Bigr)\,\|\rho_0\|_{L^p}^p \notag\\
&\le \exp\!\Bigl(\frac{C_S C_{\nu,M_d}}{1-\tau C_S C_{\nu,M_d}}\,n\tau\Bigr)\,\|\rho_0\|_{L^p}^p \notag\\
&\le \exp\!\Bigl(\frac{C_S C_{\nu,M_d}}{1-\tau C_S C_{\nu,M_d}}\,T\Bigr)\,\|\rho_0\|_{L^p}^p. \notag
\end{align*}
In particular we get such a bound for $p=d/2$ and $p=p_0$. To have that the original scheme and the penalize scheme coïncide, that is $\widetilde{\mathcal J}[\rho_\tau^{n}] = \mathcal{J}[\rho_\tau^n]$ we need to enforce $\|\rho_\tau^{n}\|_{L^{d/2}}^2\le \frac{M_d}{C_0}$ for all $n$. 
This requires that $\|\rho_0\|_{L^{d/2}}$ is small enough, depending on the final time $T$. This ends the proof of the first item of the proposition.

\medskip

\underline{Step 2: Discrete Alikakos iteration and the $L^\infty$ bound.}

Fix $p\ge 2$ (in particular $p\ge \frac{4d}{3d+1}$) and $k>0$ and set
\[
A_n(p,k):=\int_\Omega (\rho_\tau^{n}-k)_+^p\,dx.
\]
Applying~\eqref{eq:FL2} with $(\eta,\rho)=(\rho_\tau^n,\rho_\tau^{n+1})$ and multiplying by $(p-1)$ we obtain
\begin{equation}\label{eq:FL2_multiplied_new}
A_{n+1}(p,k)
\le A_n(p,k)
-\tau\,p(p-1)\int_{\{\rho\ge k\}}\rho^{p-2}|\nabla\rho|^2\,dx
+\tau\int_{\{\rho\ge k\}}\rho^{p+1}\,dx.
\end{equation}
Also we note
\[
p(p-1)\int_{\{\rho\ge k\}}\rho^{p-2}|\nabla\rho|^2\,dx
=\frac{4(p-1)}{p}\int_{\{\rho\ge k\}}|\nabla(\rho^{p/2})|^2\,dx.
\]

Write $\rho^{p+1}=\rho\cdot(\rho^{p/2})\cdot(\rho^{p/2})$ and apply H\"older on $\{\rho\ge k\}$ with exponents
$d$, $\frac{2d}{d-2}$ and $2$ (since $\frac1d+\frac{d-2}{2d}+\frac12=1$):
\[
\int_{\{\rho\ge k\}}\rho^{p+1}\,dx
\le
\|\rho\|_{L^d(\rho\ge k)}\,
\|\rho^{p/2}\|_{L^{\frac{2d}{d-2}}(\rho\ge k)}\,
\|\rho^{p/2}\|_{L^{2}(\rho\ge k)}.
\]
Set $f:=\rho^{p/2}$. By Sobolev~\eqref{eq:sobolev_inhom} (restricted to $\{\rho\ge k\}$ but the Sobolev constant is the same at the critical exponent),
\[
\|f\|_{L^{\frac{2d}{d-2}}(\rho\ge k)}
\le C\Bigl(\|\nabla f\|_{L^2(\rho\ge k)}+\|f\|_{L^2(\rho\ge k)}\Bigr).
\]
Therefore
\begin{align}
\int_{\{\rho\ge k\}}\rho^{p+1}\,dx
&\le C\,\|\rho\|_{L^d(\rho\ge k)}
\Bigl(\|\nabla(\rho^{p/2})\|_{L^2(\rho\ge k)}+\|\rho^{p/2}\|_{L^2(\rho\ge k)}\Bigr)\,
\|\rho^{p/2}\|_{L^{2}(\rho\ge k)} \notag\\
&= C\,\|\rho\|_{L^d(\rho\ge k)}\,\|\nabla(\rho^{p/2})\|_{L^2(\rho\ge k)}\,\|\rho\|_{L^p(\rho\ge k)}^{p/2}
+ C\,\|\rho\|_{L^d(\rho\ge k)}\,\|\rho\|_{L^p(\rho\ge k)}^{p}.
\label{eq:source_term_split_new}
\end{align}
Apply Young's inequality to the first term in~\eqref{eq:source_term_split_new}  yields
\begin{equation}\label{eq:source_after_young_new}
\int_{\{\rho\ge k\}}\rho^{p+1}\,dx
\le
\frac{4(p-1)}{p}\|\nabla(\rho^{p/2})\|_{L^2(\rho\ge k)}^2
+ C\,\|\rho\|_{L^p(\rho\ge k)}^{p}\,\|\rho\|_{L^d(\rho\ge k)}^{2}.
\end{equation}
Finally, on $\{\rho\ge k\}$ we have $\rho\le (\rho-k)_+ + k$, hence by $(a+b)^p\le 2^{p-1}(a^p+b^p)$,
\[
\int_{\{\rho\ge k\}}\rho^p\,dx
\le C^p\int_\Omega (\rho-k)_+^p\,dx + Ck^p = C^p A_{n+1}(p,k)+Ck^p,
\]
and therefore
\begin{equation}\label{eq:Lp_tail_by_trunc_new}
\|\rho\|_{L^p(\rho\ge k)}^{p}\le C^p A_{n+1}(p,k)+C^pk^p.
\end{equation}

Combining~\eqref{eq:source_after_young_new}--\eqref{eq:Lp_tail_by_trunc_new} and inserting into~\eqref{eq:FL2_multiplied_new},
the gradient terms cancel and we obtain
\begin{equation}\label{eq:trunc_recursion_new}
A_{n+1}(p,k)
\le A_n(p,k)
+ C^p\tau (p-1)\bigl(A_{n+1}(p,k)+k^p\bigr)\,\|\rho\|_{L^d(\rho\ge k)}^{2}.
\end{equation}

We can now work as in~\cite{dimarino}. Let $p_0>d$ as in Step~2, and denote
\[
M_{p_0}:=\sup_{m\tau\le T}\|\rho_\tau^{m}\|_{L^{p_0}(\Omega)} <\infty.
\]
For any $k>0$, since $p_0>d$ we have on $\{\rho\ge k\}$ the pointwise bound $\rho^d \le k^{d-p_0}\rho^{p_0}$, hence
\[
\|\rho\|_{L^d(\rho\ge k)}^d=\int_{\{\rho\ge k\}}\rho^d\,dx
\le k^{d-p_0}\int_\Omega \rho^{p_0}\,dx
\le k^{d-p_0} M_{p_0}^{p_0}.
\]
Therefore
\begin{equation*}
\|\rho\|_{L^d(\rho\ge k)}^{2}
\le
M_{p_0}^{2p_0/d}\,k^{2(1-p_0/d)}.
\end{equation*}
Choose $k=k(p)$ so that
\begin{equation}\label{eq:k_choice_new}
C^p(p-1)\,\|\rho\|_{L^d(\rho\ge k(p))}^{2}\le \frac12,
\end{equation}
that is
\begin{equation}\label{eq:k_of_p_new}
k(p):=
\Bigl(2C^p(p-1)\Bigr)^{\frac{d}{2(p_0-d)}}\,M_{p_0}^{\frac{p_0}{p_0-d}}.
\end{equation}
Then~\eqref{eq:trunc_recursion_new} and~\eqref{eq:k_choice_new} imply
\begin{equation*}
\Bigl(1-\frac{\tau}{2}\Bigr)A_{n+1}(p,k(p))
\le A_n(p,k(p)) + \frac{\tau}{2}k(p)^p.
\end{equation*}
Iterating this formula for $n\tau\le T$ yields
\begin{equation}\label{eq:trunc_iterated_new}
\sup_{n\tau\le T}A_n(p,k(p))
\le C\bigl(A_0(p,k(p))+k(p)^p\bigr)
\le C\,k(p)^p,
\end{equation}
where in the last inequality we absorbed $A_0(p,k(p))$ into $k(p)^p$ by increasing the constant in~\eqref{eq:k_of_p_new} if necessary since $A_0$ depends only on the initial condition.

Finally, we obtain
\[
\sup_{n\tau\le T}\|\rho_\tau^{n}\|_{L^p(\Omega)}^p
\le C^p\sup_{n\tau\le T}A_n(p,k(p)) + C^pk(p)^p
\le C\,k(p)^p,
\]
up to enlarging $k$.

Set $p_0>d$ as above and define $p_j:=2^j p_0$ for $j\ge 0$, and
\[
D_j:=\sup_{n\tau\le T}\|\rho_\tau^{n}\|_{L^{p_j}(\Omega)}.
\]
Applying the previous estimate with $p=p_{j+1}$ and using the tail bound derived from $D_j$ (instead of $M_{p_0}$),
one obtains thresholds $k_{j+1}$ such that
$$ k_{j+1}\le (2^{j}p_0 C)^{d/(2(2^{j}p_0-d))}k_j^{1+\frac{d}{2^{j}p_0-d}},\quad D_{j+1}\le C^{1/p_{j+1}}k_{j+1}. 
$$

Thus $$ \log_{+}(k_{j+1})\le C\frac{jd}{2^{j+1}p_0-2d} + \left(1+\frac{d}{2^{j}p_0-d}\right)\log_{+}k_j. $$ Since $\sum_j \frac{1}{2^{j}}<+\infty$ and $\sum_j \frac{j}{2^{j}}<+\infty$ we obtain that $\lim_{j\to+\infty}k_j<+\infty$. This concludes the proof.

\end{proof}

}

To prove the $W^{1,p}(\Omega)$ bounds we state some general lemmas. 

\begin{lemma}\label{lem:link_kantorovich}
Let $\rho,\eta>0$ with $\rho\in Prox_{\widetilde{\mathcal{J}}}^{\tau}[\eta]$ a step of the JKO scheme. And let $\varphi$ the Kantorovich potential between $\rho$ and $\eta$. Then
$$
\f{1}{\tau^{p}}W_{p}(\rho,\eta)^{p} = \int_{\Omega} \rho \left|\f{\nabla\varphi}{\tau}\right|^{p} \diff x = \mathcal{F}_{p}[\rho], 
$$
where $\mathcal{F}_{p}$ is defined in~\eqref{eq:FP}.
\end{lemma}

Its proof is just a consequence of the definition of the Wasserstein distance{\color{blue}, using $x-T(x)=\nabla\varphi$} and the formula $\f{\varphi}{\tau}+ \log \rho +1 -u[\rho] = C$ where $C$ is a constant. This equality is satisfied on $\{\rho>0\}$. The previous lemma shows that we can control the $p$-Wasserstein distance between two steps of the JKO scheme by the functional $F_{p}$. The Wasserstein distances can be related to the norms of duals of Sobolev spaces. 

\begin{lemma}\label{lem:sobone} Let $\rho, \eta\in L^{\infty}(\Omega)$ be two absolutely continuous densities. Then for $\f{1}{p}+\f{1}{q}=1$,
$$\| \rho- \eta\|_{(W^{1,p})^*(\Omega)} \leq C( \| \rho\|_{\infty} , \| \eta \|_{\infty}) W_q(\rho,\eta).$$
\end{lemma}

Actually, the previous estimate is far from being sharp, and only some suitable $L^{r}$ norms of $\rho,\eta$ are required on the right-hand side. We refer to~\cite[Proposition 5.9]{dimarino}.

To reproduce the computation of the dissipation of the $\mathcal{F}_p$ in Proposition~\ref{prop:entropy_equality} at the discrete level, we present the so-called five-gradients inequality that was introduced in~\cite{de2016bv} to obtain $BV$ estimates for some variational problems arising in optimal transport. 

\begin{lemma}\label{lem:five_gradients_inequality}
Let $\Omega$ be a smooth open bounded convex domain, and $\rho$, $\eta\in W^{1,1}(\Omega)$, be two strictly positive densities. Let $H\in C^{1}(\R^d)$ be a radially symmetric convex function. Then 
 \begin{multline*}
 \int_{\Omega}( \nabla \rho \cdot \nabla H(\nabla \varphi)+\nabla \eta\cdot \nabla H(\nabla \psi) \diff x\\= \int_{\Omega}\rho \mathrm{Tr}[D^2H(\nabla\varphi)\cdot (D^2\varphi)^2 \cdot(I-D^2\varphi)^{-1}]\diff x+ \int_{\partial\Omega}\left(\rho\nabla H(\nabla\varphi) + g \nabla H(\nabla \psi)\right)\cdot \vec{n} \diff \mathcal{H}^{d-1}\geq 0
 \end{multline*}
 where $(\varphi,\psi)$ are the corresponding Kantorovich potentials in the transport from $\rho$ to $\eta$.
\end{lemma}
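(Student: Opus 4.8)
The plan is to integrate by parts in both bulk integrals on the left-hand side, which produces exactly the boundary terms appearing on the right, and then to transport the resulting $\eta$-integral back onto $\rho$ using the optimal map. Writing $F_\varphi = \nabla H(\nabla\varphi)$ and using $\nabla\rho\cdot F_\varphi = \Div(\rho F_\varphi) - \rho\,\Div(F_\varphi)$, the divergence theorem gives
$$
\int_\Omega \nabla\rho\cdot F_\varphi\diff x = \int_{\p\Omega}\rho F_\varphi\cdot\vec{n}\diff\mathcal{H}^{d-1} - \int_\Omega \rho\,\Div(F_\varphi)\diff x,
$$
and similarly for the $\eta$-term with $F_\psi = \nabla H(\nabla\psi)$. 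Computing the divergence coordinatewise yields $\Div(\nabla H(\nabla\varphi)) = \mathrm{Tr}[D^2 H(\nabla\varphi)\,D^2\varphi]$ and likewise for $\psi$. The boundary contributions already coincide with those claimed on the right-hand side, so it remains to show that the two bulk terms $-\int_\Omega\rho\,\mathrm{Tr}[D^2H(\nabla\varphi)D^2\varphi]\diff x - \int_\Omega\eta\,\mathrm{Tr}[D^2H(\nabla\psi)D^2\psi]\diff x$ combine into the single trace integral against $\rho$.

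For this I would invoke the transport relations. Since $T(x) = x - \nabla\varphi(x)$ pushes $\rho$ onto $\eta$ with inverse $S(y) = y - \nabla\psi(y)$, the change of variables $T\#\rho=\eta$ rewrites the $\eta$-integral as $\int_\Omega\rho\,\mathrm{Tr}[D^2H(\nabla\psi(T(x)))\,D^2\psi(T(x))]\diff x$. From $S\circ T = \mathrm{id}$ one obtains $\nabla\psi(T(x)) = -\nabla\varphi(x)$; differentiating this identity in $x$ and using $DT = I - D^2\varphi$ gives $D^2\psi(T(x)) = -D^2\varphi\,(I-D^2\varphi)^{-1}$. Because $H$ is radially symmetric, $D^2 H$ is even, so $D^2H(\nabla\psi(T(x))) = D^2H(\nabla\varphi(x))$. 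Setting $A = D^2\varphi$ and using the elementary identity $-A + A(I-A)^{-1} = A(I-A)^{-1}A = A^2(I-A)^{-1}$ (valid since $A$ commutes with $(I-A)^{-1}$), the sum of the two bulk terms collapses to exactly $\int_\Omega\rho\,\mathrm{Tr}[D^2H(\nabla\varphi)(D^2\varphi)^2(I-D^2\varphi)^{-1}]\diff x$, which is the asserted identity.

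The nonnegativity then splits into a bulk and a boundary part. For the bulk integrand, convexity of $H$ gives $D^2H(\nabla\varphi)\succeq 0$; moreover $\tfrac{|x|^2}{2}-\varphi$ is convex by Brenier's theorem, so $I - D^2\varphi\succeq 0$, and its determinant equals $\rho/(\eta\circ T)>0$ by the Monge--Ampère equation, hence $I-D^2\varphi\succ 0$. As $(D^2\varphi)^2$ and $(I-D^2\varphi)^{-1}$ are commuting symmetric positive (semi)definite matrices, their product is symmetric positive semidefinite, and since the trace of the product of two positive semidefinite matrices is nonnegative, the integrand is pointwise $\geq 0$. For the boundary, convexity of $\Omega$ furnishes a supporting hyperplane at each $x\in\p\Omega$: since $T(x)\in\bar\Omega$, one has $(T(x)-x)\cdot\vec{n}(x)\leq 0$, i.e. $\nabla\varphi(x)\cdot\vec{n}(x)\geq 0$, and the analogous bound holds for $\psi$. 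Writing $H(v)=h(|v|)$ with $h$ convex and nondecreasing (forced by convexity and radiality, so $h'\geq 0$), we get $\nabla H(v)\cdot\vec{n} = h'(|v|)\,\tfrac{v\cdot\vec{n}}{|v|}\geq 0$ whenever $v\cdot\vec{n}\geq 0$; since $\rho,\eta\geq 0$ the boundary integrand is $\geq 0$ as well.

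The main obstacle is regularity: the computation above — particularly differentiating $\nabla\psi\circ T$ to obtain the Hessian relation, the change of variables, and the integration by parts — presupposes that $\varphi,\psi$ are twice differentiable and that $T$ is a bi-Lipschitz diffeomorphism, which is not granted for merely $W^{1,1}$ densities. I would resolve this by first establishing the identity and inequality for smooth, strictly positive densities on the convex domain $\Omega$, where Caffarelli's regularity theory for the Monge--Ampère equation yields $\varphi,\psi\in C^{2,\alpha}$ and $T$ a diffeomorphism, making every step rigorous; then I would recover the general case by approximating $\rho,\eta$ by smooth positive densities and passing to the limit in the identity, using stability of optimal maps and potentials together with the uniform $L^\infty$ bounds. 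Alexandrov's second-order differentiability of the semiconvex potentials offers an alternative of working directly almost everywhere, but the approximation argument is the cleaner route.
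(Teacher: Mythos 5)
The paper does not prove this lemma itself --- it is quoted from de Philippis--M\'erigot--Santambrogio~\cite{de2016bv} --- and your argument reproduces the proof given there: integration by parts, the change of variables $T\#\rho=\eta$, the identities $\nabla\psi\circ T=-\nabla\varphi$ and $D^2\psi\circ T=-D^2\varphi\,(I-D^2\varphi)^{-1}$, the parity of $\nabla H$ and $D^2H$ coming from radial symmetry, the algebraic collapse $-A+A(I-A)^{-1}=A^2(I-A)^{-1}$, and a final approximation by smooth positive densities where Caffarelli's regularity theory justifies every step. The one point to watch is that for merely $W^{1,1}$ densities it is the \emph{inequality} (not the full identity with the trace and boundary terms) that cleanly survives the limit, exactly as in the cited reference; since the paper only uses the nonnegativity of the remainder at this level of generality, and invokes the explicit decomposition only for densities already known to be bounded above and below, your proof is adequate.
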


Finally, as in~\cite[Lemma 5.1]{dimarino} and~\cite[Proposition 5.2]{toshpulatov} we have the following lemma.

\begin{lemma}\label{lem:consequence_five_gradients}
Let $\rho^{n+1}_{\tau}\in Prox_{\widetilde{\mathcal{J}}}^{\tau}(\rho^{n}_{\tau})$ be {\color{blue} any defined sequence of the JKO scheme, with $\tau$ small enough and with initial value $\rho_{0}$ as in Theorem~\ref{lem:Linfty_JKO} Let $\varphi,\psi$ be the Kantorovich potentials from the transport from $\rho^{n+1}_{\tau}$ to $\rho^{n}_{\tau}$ and assume.  Let $H:\R^d\to \R$ be a radial convex function. Set
$$
Z_{\rho^{n+1}_{\tau}} := \nabla (\log \rho^{n+1}_{\tau} -u[\rho^{n+1}_{\tau}]), \quad Z_{\rho^{n}_{\tau}} := \nabla(\log \rho^{n}_{\tau} - u[\rho^{n}_{\tau}]). 
$$

Then 
$$
\int_{\Omega}H(Z_{\rho^{n}_{\tau}})\diff\rho^{n}_{\tau} \ge \int_{\Omega}H(Z_{\rho^{n+1}_{\tau}})\diff\rho^{n+1}_{\tau} + \int_{\Omega}\nabla H\left(\f{\nabla\varphi}{\tau}\right)\cdot \left(-\nabla u[\rho^{n+1}_{\tau}] + \nabla u[\rho^{n}_{\tau}]\circ T\right)\diff x + R, 
$$

where $R\ge 0$ is the remainder term, 

\begin{align*}
R &:= \f{1}{\tau}\int_{\Omega}\rho^{n+1}_{\tau} \mathrm{Tr}[D^2H\left(\f{\nabla\varphi}{\tau}\right)\cdot (D^2\varphi)^2 \cdot(I-D^2\varphi)^{-1}]\diff x\\
&+\int_{\p\Omega}\rho^{n+1}_{\tau}\nabla H \left(\f{\nabla\varphi}{\tau}\right)\cdot \vec{n} +  \rho^{n}_{\tau} \nabla H \left(\f{\nabla\psi}{\tau}\right)\cdot\vec{n}\diff  \mathcal{H}^{d-1}. 
\end{align*}}
\end{lemma}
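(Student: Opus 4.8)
The plan is to combine two ingredients: the convexity of $H$, which turns the difference $\int H(Z_\eta)\diff\eta-\int H(Z_\rho)\diff\rho$ into a first-order (gradient) remainder, and the five-gradients identity of Lemma~\ref{lem:five_gradients_inequality}, which I will use to identify that remainder with the nonnegative term $R$ plus the potential coupling. The starting point is the JKO optimality condition recorded after Lemma~\ref{lem:link_kantorovich}, namely $\f{\varphi}{\tau}+\log\rho-u[\rho]=\text{const}$ on $\{\rho>0\}$; differentiating gives the crucial identity $\f{\nabla\varphi}{\tau}=-Z_\rho$. Since $H$ is radial, $\nabla H$ is odd, so $\nabla H\!\left(\f{\nabla\varphi}{\tau}\right)=-\nabla H(Z_\rho)$, and this sign flip is what makes the bookkeeping close up.

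First I would push everything onto $\rho$ via the optimal map. Because $T\#\rho=\eta$, I have $\int H(Z_\eta)\diff\eta=\int_\Omega H(Z_\eta\circ T)\,\rho\,\diff x$. The pointwise convexity inequality $H(Z_\eta\circ T)\ge H(Z_\rho)+\nabla H(Z_\rho)\cdot(Z_\eta\circ T-Z_\rho)$, multiplied by $\rho\ge0$ and integrated, yields
$$
\int_\Omega H(Z_\eta)\diff\eta\ge\int_\Omega H(Z_\rho)\diff\rho+\int_\Omega\rho\,\nabla H(Z_\rho)\cdot(Z_\eta\circ T-Z_\rho)\diff x.
$$
Using $Z=\nabla\log(\cdot)-\nabla u[\cdot]$, I split the remainder into a logarithmic part $I=\int_\Omega\rho\,\nabla H(Z_\rho)\cdot\big((\nabla\log\eta)\circ T-\nabla\log\rho\big)\diff x$ and a potential part $II=\int_\Omega\rho\,\nabla H(Z_\rho)\cdot\big(\nabla u[\rho]-(\nabla u[\eta])\circ T\big)\diff x$. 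Substituting $\nabla H(Z_\rho)=-\nabla H\!\left(\f{\nabla\varphi}{\tau}\right)$, the part $II$ is exactly the announced coupling term $\int_\Omega\rho\,\nabla H\!\left(\f{\nabla\varphi}{\tau}\right)\cdot\big(-\nabla u[\rho]+(\nabla u[\eta])\circ T\big)\diff x$.

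It then remains to show $I=R$. Here I would apply the five-gradients identity to the rescaled function $G:=H(\cdot/\tau)$, which is again radial convex; since $\nabla G=\f1\tau\nabla H(\cdot/\tau)$ and $D^2G=\f1{\tau^2}D^2H(\cdot/\tau)$, multiplying the resulting equality by $\tau$ collapses the right-hand side exactly to the trace-plus-boundary expression defining $R$, giving
$$
\int_\Omega\nabla\rho\cdot\nabla H\!\left(\f{\nabla\varphi}{\tau}\right)\diff x+\int_\Omega\nabla\eta\cdot\nabla H\!\left(\f{\nabla\psi}{\tau}\right)\diff x=R.
$$
I then match $I$ to this left-hand side: writing $\nabla\log\rho=\nabla\rho/\rho$ turns the second piece of $I$ into $\int_\Omega\nabla\rho\cdot\nabla H\!\left(\f{\nabla\varphi}{\tau}\right)\diff x$; for the first piece I use the potential relation $\nabla\psi\circ T=-\nabla\varphi$, so that $\nabla H\!\left(\f{\nabla\varphi}{\tau}\right)=-\big(\nabla H(\f{\nabla\psi}{\tau})\big)\circ T$, together with the change of variables $T\#\rho=\eta$ to rewrite it as $\int_\Omega\nabla\eta\cdot\nabla H\!\left(\f{\nabla\psi}{\tau}\right)\diff y$. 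Hence $I$ equals the five-gradients left-hand side, i.e. $I=R$, and combining with the convexity bound gives the claim; nonnegativity of $R$ is inherited from Lemma~\ref{lem:five_gradients_inequality}.

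The main obstacle is rigour rather than algebra: the identities $\f{\nabla\varphi}{\tau}=-Z_\rho$ and $\nabla\psi\circ T=-\nabla\varphi$, the Monge-Ampère change of variables, and the very appearance of $D^2\varphi$ in $R$ all rest on the regularity of the Kantorovich potentials and of the optimal map (Brenier/Caffarelli theory, Alexandrov second derivatives, and the strict positivity and boundedness of $\rho,\eta$). Controlling the boundary integrals and justifying the five-gradients identity for $G\in C^1$ — most safely by approximating $H$ by smooth radial convex functions and passing to the limit — is where the care is concentrated; the convexity step and the index bookkeeping are then routine.
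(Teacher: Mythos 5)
Your proof is correct and is essentially the argument the paper delegates to~\cite[Lemma 5.1]{dimarino} and~\cite[Proposition 5.2]{toshpulatov}: convexity of $H$ along the transport map, the identity $\f{\nabla\varphi}{\tau}=-Z_\rho$ from the JKO optimality condition together with the oddness of $\nabla H$, and the five-gradients inequality applied to $H(\cdot/\tau)$ to identify the logarithmic remainder with $R$. One remark: your derivation (correctly) keeps the density $\rho$ inside the coupling integral, which is consistent with how the lemma is invoked in the proof of Proposition~\ref{prop:JKO_FP}; the factor $\rho$ seems to have been inadvertently dropped in the lemma's statement.
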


With these results, we are now in position to prove Theorem~\ref{thm:JKO}. 

\subsection{$W^{1,p}$ estimates and maximum principle}

We start by proving the $L^{\infty}(0,T; W^{1,p}(\Omega))$ estimate on the curve $\rho_{\tau}$ defined by the JKO scheme with the five-gradients inequality. For now, we just need that in Lemma~\ref{lem:five_gradients_inequality}, the left-hand side of the inequality (which is the remainder term) is nonnegative. This uniform bound allows us to prove that the sequence constructed in the JKO scheme is bounded from below by a positive constant. This in turn allows us to prove the strong convergence of the JKO scheme in $L^2(0,T; H^2(\Omega))$. In this subsection we focus on the $L^{\infty}(0,T; W^{1,p}(\Omega))$ and the bound from below. 

\begin{proposition}\label{prop:JKO_FP}
Let $\rho^{n+1}_{\tau}\in Prox_{\widetilde{\mathcal{J}}}^{\tau}(\rho^{n}_{\tau})$ be {\color{blue} any defined sequence of the JKO scheme, with $\tau$ small enough and with initial value $\rho_{0}$ as in Theorem~\ref{lem:Linfty_JKO}}. Then for all $1\le p<+\infty$ there exists $C$ such that for all $n$: 
$$
\mathcal{F}_{p}[\rho^{n}_{\tau}]\le C .
$$

In particular $\rho^{n}_{\tau}$ is bounded in $W^{1,p}(\Omega)$ uniformly in $n$ and the interpolation curve $\rho_{\tau}$ is bounded in $L^{\infty}(0,T; W^{1,p}(\Omega))$ uniformly in $\tau$. 

\end{proposition}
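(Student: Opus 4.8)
The plan is to reproduce the continuous argument behind Theorem~\ref{thm:sobolev_bounds} at the discrete level, replacing the entropy equality of Proposition~\ref{prop:entropy_equality} by the five-gradients consequence of Lemma~\ref{lem:consequence_five_gradients}, and the differential Gronwall inequality by its implicit discrete counterpart. First I would reduce to $p$ large: since each $\rho^{n}_{\tau}$ is a probability density, Jensen's inequality gives $\mathcal{F}_{p_1}[\rho^{n}_{\tau}]\le \mathcal{F}_{p_2}[\rho^{n}_{\tau}]^{p_1/p_2}$ for $p_1\le p_2$, so it suffices to establish the bound for $p>\max(d,2)$ and then deduce all $1\le p<+\infty$. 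For such $p$ the function $H(z)=|z|^{p}$ is radial, convex and $C^{1}$, so Lemma~\ref{lem:consequence_five_gradients} applies with $\eta=\rho^{n}_{\tau}$, $\rho=\rho^{n+1}_{\tau}$. Using $\int_\Omega H(Z_{\rho})\diff\rho=\mathcal{F}_{p}[\rho^{n+1}_{\tau}]$, $\int_\Omega H(Z_{\eta})\diff\eta=\mathcal{F}_{p}[\rho^{n}_{\tau}]$ and discarding the nonnegative remainder $R$, I obtain the one-step inequality
\[
\mathcal{F}_{p}[\rho^{n+1}_{\tau}]\le \mathcal{F}_{p}[\rho^{n}_{\tau}] + \int_{\Omega}\nabla H\Bigl(\tfrac{\nabla\varphi}{\tau}\Bigr)\cdot\bigl(\nabla u[\rho^{n+1}_{\tau}]-\nabla u[\rho^{n}_{\tau}]\circ T\bigr)\diff x,
\]
where $\varphi$ is the Kantorovich potential from $\rho^{n+1}_{\tau}$ to $\rho^{n}_{\tau}$ and $T=\mathrm{id}-\nabla\varphi$.

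The coupling integral plays the role of the right-hand side $T_1+T_2$ in Proposition~\ref{prop:rhs}, and I would estimate it by splitting the increment as $\nabla u[\rho^{n+1}_{\tau}]-\nabla u[\rho^{n}_{\tau}]\circ T = (\nabla u[\rho^{n+1}_{\tau}]-\nabla u[\rho^{n+1}_{\tau}]\circ T) + (\nabla u[\rho^{n+1}_{\tau}]-\nabla u[\rho^{n}_{\tau}])\circ T$. For the first (Hessian) piece, a first-order Taylor expansion along the optimal displacement $x-T(x)=\nabla\varphi(x)$ bounds it pointwise by $\|D^{2}u[\rho^{n+1}_{\tau}]\|_{L^{\infty}}\,|\nabla\varphi|$; together with $\nabla H(\nabla\varphi/\tau)=p|\nabla\varphi/\tau|^{p-2}\nabla\varphi/\tau$ and the optimality relation $\nabla\varphi/\tau=-Z_{\rho^{n+1}_{\tau}}$ on $\{\rho^{n+1}_{\tau}>0\}$ (Lemma~\ref{lem:link_kantorovich}), this recovers the factor $\tau\,\mathcal{F}_{p}[\rho^{n+1}_{\tau}]$, exactly as $T_1$ carried $\mathcal{F}_p$ in the continuous computation. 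The decisive point is to control $\|D^{2}u[\rho^{n+1}_{\tau}]\|_{L^{\infty}}$ only logarithmically: since $\rho^{n+1}_{\tau}$ obeys the $L^{\infty}$ bound of Lemma~\ref{lem:Linfty_JKO}, the discrete version of Lemma~\ref{lem:rho_W1p} (whose proof uses only the $L^{\infty}$ bound, the definition of $\mathcal{F}_p$ and Calderón–Zygmund) gives $\|\rho^{n+1}_{\tau}\|_{W^{1,p}}\le C(\mathcal{F}_{p}[\rho^{n+1}_{\tau}]^{1/p}+1)$, whence, through $W^{1,p}\hookrightarrow C^{0,\alpha}$ and Corollary~\ref{cor:functional_inequality}, $\|D^{2}u[\rho^{n+1}_{\tau}]\|_{L^{\infty}}\le C(1+\log(\mathcal{F}_{p}[\rho^{n+1}_{\tau}]+1))$. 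This bounds the first piece by $C\tau\,\mathcal{F}_{p}[\rho^{n+1}_{\tau}](1+\log(\mathcal{F}_{p}[\rho^{n+1}_{\tau}]+1))$. For the second (density-increment) piece I would use $\nabla u[\rho^{n+1}_{\tau}]-\nabla u[\rho^{n}_{\tau}]=\nabla u[\rho^{n+1}_{\tau}-\rho^{n}_{\tau}]$ and combine Lemma~\ref{lem:sobone2}, Lemma~\ref{lem:sobone} and Lemma~\ref{lem:link_kantorovich} to estimate its $L^{p}$-norm by $C\,\|\rho^{n+1}_{\tau}-\rho^{n}_{\tau}\|_{(W^{1,p})^{*}}\le C\,W_{q}(\rho^{n+1}_{\tau},\rho^{n}_{\tau})\le C\tau\,\mathcal{F}_{p}[\rho^{n+1}_{\tau}]^{1/p}$; pairing with $\nabla H(\nabla\varphi/\tau)\in L^{q}$ by Hölder then controls it by $C\tau\,\mathcal{F}_{p}[\rho^{n+1}_{\tau}]$, the analogue of $T_2$.

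Collecting the two estimates gives the implicit discrete differential inequality
\[
\mathcal{F}_{p}[\rho^{n+1}_{\tau}]\le \mathcal{F}_{p}[\rho^{n}_{\tau}] + C\tau\,\mathcal{F}_{p}[\rho^{n+1}_{\tau}]\bigl(1+\log(\mathcal{F}_{p}[\rho^{n+1}_{\tau}]+1)\bigr).
\]
Writing $y_{n}=\mathcal{F}_{p}[\rho^{n}_{\tau}]$, this is the discrete form of $y'\le Cy(1+\log(y+1))$: for $\tau$ small enough the map $s\mapsto s-C\tau\, s(1+\log(s+1))$ is increasing, so the recursion can be inverted and iterated, and, exactly as in the continuous estimate $y(t)\le(y_0+1)^{\exp(Ct)}\exp(\exp(Ct)-1)-1$, it yields a bound on $y_{n}$ depending only on $\mathcal{F}_{p}[\rho_0]$ and on $n\tau\le T$, hence uniform in $n$. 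Feeding this uniform bound back into the discrete Lemma~\ref{lem:rho_W1p} gives $\sup_{n}\|\rho^{n}_{\tau}\|_{W^{1,p}}\le C$, so the interpolation curve $\rho_{\tau}$ is bounded in $L^{\infty}(0,T;W^{1,p}(\Omega))$ uniformly in $\tau$, and the reduction by Jensen extends this to every $1\le p<+\infty$.

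The main obstacle is the same mechanism that drives the continuous proof, now at the discrete level: securing the logarithmic—rather than linear—dependence on $\mathcal{F}_{p}$ in the Hessian term, which is precisely where Corollary~\ref{cor:functional_inequality} is indispensable, and then closing the resulting implicit discrete Gronwall inequality uniformly over $n\tau\le T$. Secondary technical points are the quantitative first-order expansion of $\nabla u[\rho^{n+1}_{\tau}]$ along the optimal displacement, carried out so that the Hessian piece genuinely reproduces the weighted functional $\mathcal{F}_{p}[\rho^{n+1}_{\tau}]$ rather than an unweighted $L^{p}$ quantity, and checking that every error term is of order $\tau$, so the per-step increments sum into a genuine Gronwall estimate; the smallness of $\tau$ is used both here and to guarantee invertibility of the implicit recursion.
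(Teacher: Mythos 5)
Your proposal follows essentially the same route as the paper: the five-gradients inequality with $H(z)=|z|^p$, a two-term splitting of the coupling integral into a Hessian piece (controlled logarithmically via Lemma~\ref{lem:rho_W1p}, the embedding $W^{1,p}\hookrightarrow C^{0,\alpha}$ and Corollary~\ref{cor:functional_inequality}) and a density-increment piece (controlled via the dual Sobolev norm, the Wasserstein distance and Lemma~\ref{lem:link_kantorovich}), followed by an implicit discrete Gronwall argument for $y_{n+1}\le y_n+C\tau y_{n+1}(1+\log(y+1))$ closed via $v_n=\log(y_n+1)$. The only cosmetic differences are that you Taylor-expand $u[\rho^{n+1}_\tau]$ where the paper uses $u[\rho^{n}_\tau]$ (so your recursion carries $\log(\mathcal{F}_p[\rho^{n+1}_\tau]+1)$ instead of $\log(\mathcal{F}_p[\rho^{n}_\tau]+1)$, equally closable for small $\tau$), and the duality indices in your increment estimate are transposed (the correct chain is $\|\nabla u\|_{L^p}\le C\|f\|_{(W^{1,q})^*}\le C\,W_p$ with $q$ dual to $p$), which does not affect the final bound.
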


\begin{remark}
The assumption that $\tau$ is sufficiently small can be explicitly computed, as it will become clear in the proof. Since our goal is to take the limit as $\tau \to 0$, this assumption is always valid. 
\end{remark}

\begin{proof}
{\color{blue}Here we use the $L^{\infty}$ bounds on $\rho_\tau^n$ from Proposition~\ref{lem:Linfty_JKO}.} We prove the proposition for $p>d$ but an estimate on $F_p$ implies an estimate on $F_q$ for $q<p$. Also, the estimate on $\mathcal{F}_{p}[\rho^{n}_{\tau}]$ implies an estimate in $W^{1,p}(\Omega)$ as a consequence of Lemma~\ref{lem:rho_W1p}. Choosing $H(z)=|z|^p$ in Lemma~\ref{lem:consequence_five_gradients} {\color{blue} and observing the remainder term is nonnegative} yields 
$$
\mathcal{F}_{p}[\rho^{n}_{\tau}] \ge \mathcal{F}_{p}[\rho^{n+1}_{\tau}] + \f{p}{\tau^{p-1}}\int_{\Omega} \rho^{n+1}_{\tau} |\nabla\varphi|^{p-2}\nabla\varphi\cdot (-\nabla u[\rho^{n+1}_{\tau}] + \nabla u[\rho^{n}_{\tau}] \circ T)\diff x.
$$

We decompose the last term of the right-hand side into two terms:

$$
\f{p}{\tau^{p-1}}\int_{\Omega} \rho^{n+1}_{\tau} |\nabla\varphi|^{p-2}\nabla\varphi\cdot (-\nabla u[\rho^{n+1}_{\tau}] + \nabla u[\rho^{n}_{\tau}] \circ T)\diff x = C_1 + C_2
$$

where 
\begin{align*}
&C_1 := -\f{p}{\tau^{p-1}}\int_{\Omega} \rho^{n+1}_{\tau} |\nabla\varphi|^{p-2}\nabla\varphi\cdot (\nabla u[\rho^{n}_{\tau}]- \nabla u[\rho^{n}_{\tau}] \circ T)\diff x, \\
&C_2 := -\f{p}{\tau^{p-1}}\int_{\Omega} \rho^{n+1}_{\tau} |\nabla\varphi|^{p-2}\nabla\varphi\cdot (\nabla u[\rho^{n+1}_{\tau}-\rho^{n}_{\tau}])\diff x.
\end{align*}

We start by estimating $C_1$. {\color{blue}Using Taylor-Young formula and the definition $T(x) = x-\nabla\varphi(x)$ we obtain $\nabla u[\rho^{n}_{\tau}](x) - \nabla u[\rho^{n}_{\tau}] \circ T(x) =  \int_{0}^{1} D^2 u(tx + (1-t)T(x))\nabla\varphi(x)\diff t$.

Therefore using Lemma~\ref{lem:link_kantorovich} we can estimate 

$$
|C_1|\le \tau p\|D^{2}u[\rho^{n}_{\tau}]\|_{L^{\infty}}\int_{\Omega} \rho^{n+1}_{\tau} \left|\f{\nabla\varphi}{\tau}\right|^{p}\diff x =\tau p \|D^{2}u[\rho^{n}_{\tau}]\|_{L^{\infty}}\mathcal{F}_{p}[\rho^{n+1}_{\tau}].
$$
}

As in the continuous setting, using Theorem~\ref{thm:functional_inequality}, Lemma~\ref{lem:rho_W1p} and Theorem~\ref{lem:Linfty_JKO}, we obtain
$$
|C_1| \le C\tau\mathcal{F}_{p}[\rho^{n+1}_{\tau}](1+\log(\mathcal{F}_{p}[\rho^{n}_{\tau}]+1)).
$$

We turn our attention to $C_2$. We can estimate $C_2$ from above as
$$
|C_2| \le \tau p\int_{\Omega} \rho^{n+1}_{\tau} \left|\f{\nabla\varphi}{\tau}\right|^{p-1} \left|\nabla u\left[\f{\rho^{n+1}_{\tau}-\rho^{n}_{\tau}}{\tau}\right]\right|\diff x.
$$

By Hölder inequality with exponents $\f{p}{p-1}$ and $p$ and using Lemma~\ref{lem:link_kantorovich}
\begin{align*}
|C_2| &\le C \tau p \mathcal{F}_{p}[\rho^{n+1}_{\tau}]^{1-1/p}\left\|\nabla u\left[\f{\rho^{n+1}_{\tau}-\rho^{n}_{\tau}}{\tau}\right]\right\|_{L^{p}}. 
\end{align*}

Then combining Lemma~\ref{lem:link_kantorovich} with $\f{1}{p} + \f{1}{q}=1$, Lemma~\ref{lem:sobone}, and Lemma~\ref{lem:sobone2} 

$$
\left\|\nabla u\left[\f{\rho^{n+1}_{\tau}-\rho^{n}_{\tau}}{\tau}\right]\right\|_{L^{p}} \le C\left\|\f{\rho^{n+1}_{\tau}-\rho^{n}_{\tau}}{\tau}\right\|_{(W^{1,q})^*}\le \f{C}{\tau} W_{p}(\rho^{n+1}_{\tau},\rho^{n}_{\tau}) = CF_{p}[\rho^{n+1}_{\tau}]^{1/p}.
$$

In the end
$$
|C_2|\le C\tau \mathcal{F}_p[\rho^{n+1}_{\tau}].
$$

Therefore we deduce a discrete version of the differential inequality $\f{d\mathcal{F}_{p}[\rho]}{dt}\le C(\mathcal{F}_{p}[\rho]+1)\log(\mathcal{F}_{p}[\rho]+1)$ that is
$$
\mathcal{F}_p[\rho^{n}_{\tau}]\ge \mathcal{F}_p[\rho^{n+1}_{\tau}] -C\tau \mathcal{F}_p[\rho^{n+1}_{\tau}]\log(\mathcal{F}_p[\rho^{n}_{\tau}]+1) - C\tau \mathcal{F}_p[\rho^{n+1}_{\tau}]
$$

This define the sequence $u_n = \mathcal{F}_{p}[\rho_n]$ by
$$
u_{n+1}\le \f{u_n}{1-C\tau-C\tau \log(u_n+1)}.
$$
Now we define $v_n = \log(u_n +1)$ and we obtain 
$$
u_{n+1}+1\le \f{u_n+1}{1-C\tau-C\tau v_n} -\f{C\tau + C\tau v_n}{1-C\tau-C\tau v_n}. 
$$

We can choose $\tau$ small enough to make the last term negative (taking $\tau< \f{1}{C(1+v_n)}$). At first $\tau$ seems to depend on $n$ but we will see that this implies a uniform estimate on all $v_n$ independent of $\tau$ and then we can conclude that $\tau$ can be chosen in fact independently of $n$. Taking the $\log$ of the previous expression then yields

$$
v_{n+1}\le v_n - \log(1-C\tau-C\tau v_n)\le v_n + C\tau + C\tau v_n.
$$

Therefore
$$
v_{n+1} \le  v_n(1+C\tau)+ C\tau.
$$

By induction 
$$
v_{n}\le v_{0}(1+C\tau)^{n} + (1+C\tau)^n-1\le v_{0} e^{nC\tau}+e^{nC\tau}-1,
$$

And we conclude to a uniform estimate on $u_n$ since $n\tau\le T$. 

\end{proof}

We use this proposition to prove a maximum principle:

\begin{proposition}\label{prop:JKO_maximum_principle}
Let $\rho_{0}$ be as in Theorem~\ref{thm:JKO}. Let $\rho^{n+1}_{\tau}\in Prox_{\widetilde{\mathcal{J}}}^{\tau}(\rho^{n}_{\tau})$ be the JKO step and $\varphi^{n+1}_{\tau}$ be the Kantorovich potential from $\rho^{n+1}_{\tau}$ to $\rho^{n}_{\tau}$
$$
\log\rho^{n+1}_{\tau} - u[\rho^{n+1}_{\tau}] + \f{\varphi^{n+1}_{\tau}}{\tau}=C,
$$
{\color{blue} where the inequality follows since $\rho_{\tau}^{n+1}$ is positive from Lemma~\ref{lem:strict_positivy}}.
Then there exists $0<c_1\le c_2$ independent of $\tau$ such that for all $n$
$$
c_1\le \rho^{n}_{\tau}\le c_2.
$$
\end{proposition}

\begin{proof}
The upper bound follows from assumption. Now, we focus our attention on the lower bound. {\color{blue}We first have positivity from Lemma~\ref{lem:strict_positivy}}. Now we consider a minimal point $x$ of  $\log(\rho^{n+1}_{\tau}) -  u[\rho^{n+1}_{\tau}]$, which corresponds to a maximum for $\varphi^{n+1}_{\tau}$. This minimum cannot occur at the boundary, the argument is classical: since $\nabla \varphi^{n+1}_{\tau}$ points outward the transport map $T(x) = x - \nabla \varphi^{n+1}_{\tau}(x)$ would send $x$ to the opposite side of the boundary (we assumed $\Omega$ convex), contradicting the monotonicity of $T$. For further details, see~\cite[Proof of Lemma 2.4]{iacobelli2019weighted}. Therefore the minimal point $x$ in in the interior of $\Omega$. Applying the Monge-Ampère equation, 
$$
\rho^{n+1}_{\tau}(x) = \rho^{n}_{\tau}(x-\nabla\varphi^{n+1}_{\tau}(x))\det(I-D^{2}\varphi^{n+1}_{\tau}(x)),
$$

together with the fact that $\nabla\varphi^{n+1}_{\tau}(x) =0$ and $D^2 \varphi^{n+1}_{\tau}(x) \le 0$, we derive 
$$
\log\rho^{n+1}_{\tau}(x)\ge \log \rho^{n}_{\tau}(x).
$$ Then we add and subtract $u[\rho^{n+1}_{\tau}]$ and take the min on the right hand side
$$
\min [\log(\rho^{n+1}_{\tau})- u[\rho^{n+1}_{\tau}]] \ge \min [\log(\rho^{n}_{\tau}) - u[\rho^{n}_{\tau}]] - \|u[\rho^{n+1}_{\tau}]-u[\rho^{n}_{\tau}]\|_{L^{\infty}}.
$$

Therefore we need to control
$$
\|u[\rho^{n+1}_{\tau}-\rho^{n}_{\tau}]\|_{L^{\infty}}.
$$
For $p>d$, by Sobolev embedding and Lemma~\ref{lem:sobone}, Lemma~\ref{lem:sobone2}.
\begin{align*}
\|u[\rho^{n+1}_{\tau}-\rho^{n}_{\tau}]\|_{L^{\infty}}&\le \|u[\rho^{n+1}_{\tau}-\rho^{n}_{\tau}]\|_{W^{1,p}} \\
&\le C\mathcal{W}_{p}(\rho^{n+1}_{\tau},\rho^{n}_{\tau}).
\end{align*}

Now with Lemma~\ref{lem:link_kantorovich} and Proposition~\ref{prop:JKO_FP} we deduce
$$
\|u[\rho^{n+1}_{\tau}-\rho^{n}_{\tau}]\|_{L^{\infty}}\le C\tau. 
$$
We conclude that  for all $n$
$$
\min [\log\rho^{n+1}_{\tau} -u[\rho^{n+1}_{\tau}]] \ge \min[\log(\rho^{n}_{\tau}) - u[\rho^{n}_{\tau}]] + C\tau
$$
Since $n\tau\le T$, that means that there exists a constant $C$ such that for all $n$:
$$
\min [\log\rho^{n+1}_{\tau} - u[\rho^{n+1}_{\tau}]] \ge \min [\log\rho^{0}-u[\rho^{0}]]+C 
$$

And by a simple computation using the $L^{\infty}$ estimates of $u[\rho_{n+1}^{\tau}]$ we proved that $\log\rho^{n+1}_{\tau}$ is bounded uniformly from below by a constant (which could be negative). But taking the exponential yields the result. 

\end{proof}

To finish the proof of Theorem~\ref{thm:JKO}, it only remains to prove the strong $L^{2}(0,T; H^{2}(\Omega))$ convergence of the scheme.

\subsection{Strong convergence of the JKO scheme in $L^{2}(0,T; H^{2}(\Omega))$}

The following proposition is the integrated discrete version of the entropy equality stated in Proposition~\ref{prop:entropy_equality} in the case $p=2$.

\begin{proposition}\label{prop:L^2H^2bound}
There exists a function $\eps(\tau)$ which tends to 0 when $\tau\to 0$ such that the constructed curve $\rho_{\tau}$ satisfies
\begin{align*}
 &\mathcal{F}_{2}[\rho_{0}] -  \mathcal{F}_{2}[\rho_{\tau}(T)]\ge 2\int_{0}^{T}\int_{\Omega} |D^{2}(\log \rho_{\tau}-u[\rho_\tau])|^2\rho_{\tau}\diff x \diff t \\
 &- 2 \int_{0}^{T}\int_{\Omega}\rho_{\tau}  \nabla(\log(\rho_{\tau}) - u[\rho_{\tau}])\cdot D^{2}u[\rho_{\tau}]\nabla(\log(\rho_{\tau})-u[\rho_{\tau}])\diff x\diff t\\
 & + 2 \int_{0}^{T}\int_{\Omega}\rho_{\tau}^{t}(x)\nabla(\log(\rho_{\tau})-u[\rho_{\tau}])\cdot\nabla u\left[ \f{\rho_{\tau}-\rho_{t-\tau}^{\tau}}{\tau}\right]\diff x \diff t \\
 &+ 2\int_{0}^{T}\int_{\p\Omega}\rho \f{\nabla (\log \rho_{\tau}- u[\rho_{\tau}])\cdot D^{2} h \nabla (\log \rho_{\tau}- u[\rho_{\tau}])}{|\nabla h|}\diff \mathcal{H}^{d-1}\diff t + \eps(\tau)
\end{align*}
\end{proposition}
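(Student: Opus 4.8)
The plan is to apply the one-step five-gradients inequality of Lemma~\ref{lem:consequence_five_gradients} with the radial convex function $H(z)=|z|^2$ to each JKO step $\rho^{n+1}_\tau\in Prox_{\mathcal J}^\tau(\rho^n_\tau)$, and then to sum over $n=0,\dots,N-1$ with $N\tau=T$. Since $\rho_\tau$ is piecewise constant in time, the resulting discrete sums $\tau\sum_n\int_\Omega(\cdots)\rho^{n+1}_\tau\diff x$ are exactly the time integrals $\int_0^T\int_\Omega(\cdots)\rho_\tau\diff x\diff t$ appearing in the statement, and the left-hand side telescopes to $\mathcal F_2[\rho_0]-\mathcal F_2[\rho_\tau(T)]$. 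With $\nabla H(z)=2z$ and $D^2H(z)=2I$, the one-step inequality reads
$$
\mathcal F_2[\rho^n_\tau]-\mathcal F_2[\rho^{n+1}_\tau]\ge \frac{2}{\tau}\int_\Omega \rho^{n+1}_\tau\,\nabla\varphi\cdot\big(-\nabla u[\rho^{n+1}_\tau]+\nabla u[\rho^n_\tau]\circ T\big)\diff x+R_n,\qquad R_n\ge 0.
$$
Writing $Z^{n+1}=\log\rho^{n+1}_\tau-u[\rho^{n+1}_\tau]$, I would use the optimality relation $\tfrac{\varphi}{\tau}+Z^{n+1}=C$ from Lemma~\ref{lem:link_kantorovich}, valid on $\Omega$ because the maximum principle of Proposition~\ref{prop:JKO_maximum_principle} gives $\{\rho^{n+1}_\tau>0\}=\Omega$, to obtain the two identities $\nabla\varphi=-\tau\nabla Z^{n+1}$ and $D^2\varphi=-\tau D^2Z^{n+1}$ that drive the whole computation.

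Next I would decompose the transport term exactly as in the proof of Proposition~\ref{prop:JKO_FP}: adding and subtracting $\nabla u[\rho^n_\tau]$ splits it into $C_1+C_2$. By the mean value theorem and $T(x)=x-\nabla\varphi(x)$, the term $C_1$ becomes $-2\tau\int_\Omega\rho^{n+1}_\tau\,\nabla Z^{n+1}\cdot(D^2u[\rho^n_\tau]\circ\xi)\,\nabla Z^{n+1}\diff x$, the discrete analogue of the second term of the statement; the term $C_2$, using linearity of $u[\cdot]$ and $\rho^{n+1}_\tau-\rho^n_\tau=\tau\cdot\frac{\rho^{n+1}_\tau-\rho^n_\tau}{\tau}$, becomes $2\tau\int_\Omega\rho^{n+1}_\tau\,\nabla Z^{n+1}\cdot\nabla u[\frac{\rho^{n+1}_\tau-\rho^n_\tau}{\tau}]\diff x$, the discrete analogue of the third. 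For the remainder, substituting $D^2\varphi=-\tau D^2Z^{n+1}$ turns its interior part into
$$
\frac{2}{\tau}\int_\Omega\rho^{n+1}_\tau\,\mathrm{Tr}\big[(D^2\varphi)^2(I-D^2\varphi)^{-1}\big]\diff x=2\tau\int_\Omega\rho^{n+1}_\tau\,\mathrm{Tr}\big[(D^2Z^{n+1})^2(I+\tau D^2Z^{n+1})^{-1}\big]\diff x,
$$
whose leading part $2\tau\int_\Omega\rho^{n+1}_\tau|D^2Z^{n+1}|^2\diff x$ sums to the Hessian term, while the boundary part of $R_n$ is handled by the geometric identity Lemma~\ref{lem:boundary_h} and the convexity of $\Omega=\{h<0\}$, exactly as in the continuous computation of Proposition~\ref{prop:entropy_equality} and in~\cite{toshpulatov}, producing the boundary term of the statement.

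The hard part will be controlling all the error terms so that their sum over the $O(1/\tau)$ steps tends to $0$. Diagonalising $D^2Z^{n+1}$ with eigenvalues $\mu_i$, the discrepancy between the remainder and the Hessian term is $-2\tau^2\int_\Omega\rho^{n+1}_\tau\sum_i\frac{\mu_i^3}{1+\tau\mu_i}\diff x$, bounded by $C\tau\,\|D^2\varphi\|_{L^\infty}\int_\Omega\rho^{n+1}_\tau|D^2Z^{n+1}|^2\diff x$; similarly, replacing $D^2u[\rho^n_\tau]\circ\xi$ by $D^2u[\rho^{n+1}_\tau]$ at the point $x$ costs a modulus-of-continuity error governed by $\|D^2u[\rho_\tau]\|_{C^{0,\alpha}}$, $|\xi-x|\le\|\nabla\varphi\|_{L^\infty}$ and $W_p(\rho^{n+1}_\tau,\rho^n_\tau)\le C\tau$. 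Thus everything reduces to two inputs: first, that $\int_0^T\int_\Omega\rho_\tau|D^2Z|^2$ is bounded uniformly in $\tau$, which I would prove by an absorption argument on the summed one-step inequality (the forcing terms $C_1,C_2$ are $O(\tau)$ per step by Proposition~\ref{prop:JKO_FP} and Theorem~\ref{thm:functional_inequality}, and $\mathcal F_2[\rho_0]$, $\mathcal F_2[\rho_\tau(T)]$ are bounded, while the remainder is nonnegative); and second, that $\|\nabla\varphi\|_{L^\infty}$ and $\|D^2\varphi\|_{L^\infty}$ tend to $0$ uniformly in $n$ with a rate, which follows by interpolating the uniform $C^{0,\alpha}$ bounds of Proposition~\ref{prop:JKO_FP} against $\|\nabla\varphi\|_{L^p}\le C\tau$ (a consequence of Lemma~\ref{lem:link_kantorovich} and the lower bound of Proposition~\ref{prop:JKO_maximum_principle}). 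With these two facts every error is $o(1)$, and collecting them into a single function $\eps(\tau)\to 0$ yields the inequality; the most delicate point remains the boundary term, where the no-flux structure and the sign supplied by the convexity of $\Omega$ must be tracked carefully.
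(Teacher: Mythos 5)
Your proposal follows essentially the same route as the paper: the five-gradients inequality with $H(z)=|z|^2$, the optimality relation $\nabla\varphi=-\tau\nabla Z^{n+1}$, the split of the transport term into the $D^2u$ quadratic form plus a $\nabla u[(\rho^{n+1}_\tau-\rho^n_\tau)/\tau]$ term, the expansion of the trace remainder using the smallness of $D^2\varphi$, the boundary term via Lemma~\ref{lem:boundary_h} and convexity, and summation over $n$. The only (harmless) deviation is that you absorb the Taylor error in the $D^2u$ term through the Hölder modulus of $D^2u[\rho_\tau]$ and the $W_p$-closeness of consecutive steps, where the paper instead uses a second-order Taylor expansion controlled by $\|\nabla D^2u[\rho^{n+1}_\tau]\|_{L^2}$ and $\mathcal F_6$; both are valid given the uniform $W^{1,p}$ bounds.
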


To prove this proposition, we need to apply once again the five gradients inequality, and show how the discrete terms relate to their continuous counterpart. For this, we need a control on $|x-T(x)| = |\nabla\varphi(x)|$. The following lemma can be adapted from~\cite[Proposition 5.3]{toshpulatov} and uses strongly the bound from below found in Proposition~\ref{prop:JKO_maximum_principle}. Indeed, the result from~\cite{Bouchitt2007ANL} implies that $\|x-T(x)\|_{L^{\infty}}$ can be controlled by the Wasserstein distance $W_{2}(\rho_{\tau}^{n}, \rho_{\tau}^{n+1})$ provided the densities are bounded from below. 
\begin{lemma}\label{lem:control_phi}
 Let $\rho^{n+1}_{\tau}\in Prox_{\widetilde{\mathcal{J}}}[\rho^{n}_{\tau}]$ be any steps constructed in the JKO scheme with initial condition $\rho_0$ as in Theorem~\ref{thm:JKO}. Then there exists $\beta>0$  and $C>0$ such that all the potentials $\varphi_{\tau}^{n}$ satisfy
$$\|\nabla\varphi_{\tau}^{n}\|_{L^{\infty}} + \|D^2\varphi_{\tau}^{n}\|_{L^{\infty}}\le C\tau^{\beta}$$.
\end{lemma}

The estimate for $\nabla\varphi_{\tau}^{n}$ can be derived from~\cite{Bouchitt2007ANL}, together with $W_{2}^{2}(\rho_{\tau}^{n},\rho_{\tau}^{n+1})\leq C\tau$, which comes from the optimality condition in the JKO scheme. Since $\rho_{\tau}^{n+1}$ and $\rho_{\tau}^{n}$ are uniformly controlled in $W^{1,p}$ and thus in $C^{0,\alpha}$, the Monge-Ampère equation yields a $C^{2,\alpha}$ estimate for $\varphi_{\tau}^{n}$. The estimate on $D^{2}\varphi_{\tau}^{n}$ is then achieved by interpolation with this result and the earlier estimate for $\nabla\varphi_{\tau}^{n}$.

\begin{proof}[Proof of Proposition~\ref{prop:L^2H^2bound}]
Let $\rho^{n+1}_{\tau}\in Prox_{J}[\rho^{n}_{\tau}]$ and $(\varphi^{n+1}_{\tau},\psi^{n+1}_{\tau}$ the corresponding Kantorovich potentials). As in the proof of Proposition~\ref{prop:JKO_FP}, we use the five gradient inequality but with $H(z)=|z|^2$ and we consider the remainder term. 
We obtain similarly
$$
\mathcal{F}_{2}[\rho_{\tau}^{n}]\ge \mathcal{F}_{2}[\rho_{\tau}^{n+1}] + C_1 + C_2 + R
$$

where 

\begin{align*}
&C_1 := -\f{2}{\tau}\int_{\Omega} \rho_{\tau}^{n+1} \nabla\varphi_{\tau}^{n+1}\cdot (\nabla u[\rho_{\tau}^{n}]  -\nabla u[\rho_{\tau}^{n}] \circ T)\diff x, \\
&C_2 :=- \f{2}{\tau}\int_{\Omega} \rho_{\tau}^{n+1} \nabla\varphi_{\tau}^{n+1}\cdot \nabla u[\rho_{\tau}^{n+1}-\rho_{\tau}^{n}]\diff x, \\
&R := 2\f{1}{\tau}\int_{\Omega}\rho_{\tau}^{n+1} \mathrm{Tr}[(D^2\varphi_{\tau}^{n+1})^2 \cdot(I-D^2\varphi_{\tau}^{n+1})^{-1}]\diff x\\
&+2\int_{\p\Omega}\rho_{\tau}^{n+1} \f{\nabla\varphi_{\tau}^{n+1}}{\tau}\cdot \vec{n} +  \rho_{\tau}^{n} \f{\nabla\psi_{\tau}^{n+1}}{\tau}\cdot\vec{n}\diff \mathcal{H}^{d-1}. 
\end{align*}

Concerning $C_1$ we write
\begin{align*}
&C_1 =  -\f{2}{\tau}\int_{\Omega} \rho_{\tau}^{n+1} \nabla\varphi_{\tau}^{n+1}\cdot (\nabla u[\rho_{\tau}^{n+1}]- \nabla u[\rho_{\tau}^{n+1}] \circ T) \diff x\\
&- \f{2}{\tau}\int_{\Omega} \rho_{\tau}^{n+1} \nabla\varphi_{\tau}^{n+1}\cdot (\nabla u[\rho_{\tau}^{n}-\rho_{\tau}^{n+1}]- \nabla u[\rho_{\tau}^{n}-\rho_{\tau}^{n+1}] \circ T)\diff x \\
&= C_{11} + C_{12}. 
\end{align*}

For $C_{11}$, with $T(x)= x-\nabla\varphi^{n+1}_{\tau}(x)$ and by applying twice Taylor-Young formula we can write
\begin{align*}
\nabla u[\rho^{n+1}_{\tau}] - \nabla u[\rho^{n+1}_{\tau}]\circ T &= \int_{0}^{1} D^{2}u[\rho^{n+1}_{\tau}](x- t\nabla\varphi^{n+1}_{\tau})\nabla\varphi^{n+1}_{\tau}\diff t\\
&= D^{2}u[\rho^{n+1}_{\tau}](x)\nabla\varphi^{n+1}_{\tau}(x) \\ &- \int_{0}^{1}t\int_{0}^{1} \nabla D^{2} u[\rho^{n+1}_{\tau}](x-st\nabla\varphi^{n+1}_{\tau}): \nabla\varphi^{n+1}_{\tau}\otimes\nabla\varphi^{n+1}_{\tau}\diff s \diff t.
\end{align*}

Therefore 

\begin{align*}
C_{11}& = -\f{2}{\tau}\int_{\Omega}\rho_{\tau}^{n+1}\nabla\varphi_{\tau}^{n+1}\cdot D^{2}u[\rho^{n+1}_{\tau}]\nabla\varphi^{n+1}_{\tau}\diff x + I\\
&= -2\tau\int_{\Omega}\rho_{\tau}^{n+1}\nabla(\log(\rho_{\tau}^{n+1}) - u[\rho_{\tau}^{n+1}])\cdot D^{2}u[\rho^{n+1}_{\tau}]\nabla(\log(\rho_{\tau}^{n+1}) - u[\rho_{\tau}^{n+1}])\diff x + I. 
\end{align*}
where 
$$
|I|\le C\f{2}{\tau}\int_{\Omega}\rho_{\tau}^{n+1}|\nabla\varphi_{\tau}^{n+1}|^3\int_{0}^{1}\int_{0}^{1}|\nabla D^{2} u[\rho_{\tau}^{n+1}](x-st\nabla\varphi_{\tau}^{n+1})|\diff s\diff t \diff x.
$$

By Hölder inequality and using the $L^{\infty}$ norm on $\rho_{\tau}^{n+1}$ with Theorem~\ref{lem:Linfty_JKO}

$$
|I|\le C\f{2}{\tau}\left(\int_{\Omega}\rho_{\tau}^{n+1}|\nabla\varphi_{\tau}^{n+1}|^6\right)^{1/2}\left(\int_{\Omega}\int_{0}^{1}\int_{0}^{1}|\nabla D^{2} u[\rho_{\tau}^{n+1}](x-st\nabla\varphi_{\tau}^{n+1})|^2\diff s\diff t \diff x.\right)^{1/2}
$$

With Lemma~\ref{lem:link_kantorovich} and a change of variables: 
$$
|I|\le C\tau^{2}\mathcal{F}_{6}[\rho_{\tau}^{n+1}]^{1/2}\|\nabla D^{2}u[\rho_{\tau}^{n+1}]\|_{L^{2}}(1+\|D^{2}\varphi_{\tau}^{n+1}\|_{L^{\infty}}).
$$
Lemma~\ref{lem:rho_W1p} implies $\|\nabla D^{2}u[\rho_{\tau}^{n+1}]\|_{L^{2}}\le C\|\nabla\rho_{\tau}^{n+1}\|_{L^{2}}\le C(1+\mathcal{F}_{2}[\rho_{\tau}^{n+1}]).$ Together with Lemma~\ref{lem:control_phi} and Lemma~\ref{prop:JKO_FP} we obtain 
$$
|I|\le C \tau^2(1+\tau^{\beta}). 
$$

We focus on $C_{12}$, that we rewrite as
$$
C_{12} = \f{2}{\tau}\int_{\Omega}\rho_{\tau}^{n+1}\nabla\varphi_{\tau}^{n+1}\int_{0}^{1}D^{2}u[\rho_{\tau}^{n}-\rho_{\tau}^{n+1}](x-t\nabla\varphi_{\tau}^{n+1})\nabla\varphi_{\tau}^{n+1}\diff t \diff x.
$$

Therefore with a change of variable, Hölder inequality and the $L^{\infty}$ estimates provided by Theorem~\ref{lem:Linfty_JKO} and Lemma~\ref{lem:control_phi}: 
$$
C_{12}\le C \tau \mathcal{F}_{2}[\rho_{\tau}^{n+1}]^{1/2}\|D^{2}u[\rho_{\tau}^{n}-\rho_{\tau}^{n+1}]\|_{L^{2}} (1 + \tau^{\beta}).
$$

It remains to observe that 
$$
\|D^{2}u[\rho_{\tau}^{n}-\rho_{\tau}^{n+1}]\|_{L^{2}} \le \|\rho_{\tau}^{n} - \rho_{\tau}^{n+1}\|_{L^{2}} \le C \|\rho_{\tau}^{n}-\rho_{\tau}^{n+1}\|_{H^{-1}}\|\rho_{\tau}^{n}-\rho_{\tau}^{n+1}\|_{H^{1}}.
$$

Since both $\rho_{\tau}^{n}$ and $\rho_{\tau}^{n+1}$ are bounded in $H^1$ by Proposition~\ref{prop:JKO_FP} we can use Lemma~\ref{lem:sobone} and Lemma~\ref{lem:link_kantorovich} to deduce

$$
C_{12}\le C\tau^2 (1+\tau^\beta)\mathcal{F}_{2}[\rho]\le C\tau^2(1+\tau^{\beta}).
$$

We now focus on the remainder term. The first integral that we write as $R_1+ R_2$ where
\begin{align*}
& R_1 = 2\f{1}{\tau}\int_{\Omega}\rho_{\tau}^{n+1} \mathrm{Tr}[(D^2\varphi_{\tau}^{n+1})^2 \cdot(I-D^2\varphi_{\tau}^{n+1})^{-1}]\diff x, \\
& R_2 = 2\int_{\p\Omega}\rho_{\tau}^{n+1} \f{\nabla\varphi_{\tau}^{n+1}}{\tau}\cdot \vec{n} +  \rho_{\tau}^{n} \f{\nabla\psi_{\tau}^{n+1}}{\tau}\cdot\vec{n}\diff \mathcal{H}^{d-1}. 
\end{align*}

Using Lemma~\ref{lem:control_phi} we obtain
$$
R_1= 2\f{(1+\eps(\tau))}{\tau} \int_{\Omega} \rho_{\tau}^{n+1} |D^2\varphi_{\tau}^{n+1}|^2\diff x,
$$

We conclude
$$
R_1= 2(1+\eps(\tau))\tau \int_{\Omega} \rho_{\tau}^{n+1} |D^2(\log\rho_{\tau}^{n+1} - u[\rho_{\tau}^{n+1}])|^2\diff x. 
$$

The term $R_2$ can be adapted from the proof of~\cite[Theorem 6.3]{toshpulatov} and therefore we do not repeat the technical computations. We conclude
$$
R_2 = 2\tau\int_{\p\Omega}\rho_{\tau}^{n+1} \f{(\nabla (\log \rho_{\tau}^{n+1} - u[\rho_{\tau}^{n+1} ]))^{T} D^{2} h \nabla (\log \rho_{\tau}^{n+1} - u[\rho_{\tau}^{n+1} ])}{|\nabla h|}\diff \mathcal{H}^{d-1} + \tau\eps(\tau).
$$

Finally, summing over $n$ in all the terms yields the result. Indeed, since $\rho_{\tau}$ is the constant interpolation curve {\color{blue} defined in~\eqref{eq:def_JKO2}}, for all $f$: $\int_{0}^{T}f(\rho_{\tau})\diff t = \sum_{n}\tau f(\rho_{\tau}^{n}).$

\end{proof}

Before proving the convergence in $L^2(0,T;H^2(\Omega))$ of the curve we first prove strong convergence in $L^{p}(0,T; W^{1,p}(\Omega))$. 
\begin{proposition}\label{prop:first_convergence_JKO}
    {\color{blue}Let $\rho_{0}$ be an initial condition as in Theorem~\ref{thm:JKO}. Let $\rho_{\tau}$ be a constructed curve of the JKO scheme~\eqref{eq:def_JKO2}, bounded in $L^{\infty}((0,T)\times\Omega)$ uniformly in $\tau$}. Then $\rho_{\tau}$ is bounded uniformly in $\tau$ in $L^{2}(0,T; H^{2}(\Omega))$ and {\color{blue} up to a subsequence} $\rho_{\tau}\to \rho$ strongly  in $L^{p}(0,T; W^{1,p}(\Omega)) $.
\end{proposition}

\begin{proof}
{\color{blue}We begin with the $L^{2}(0,T;H^{2}(\Omega))$ estimate. By Proposition~\ref{prop:L^2H^2bound} and its proof, together with the fact that $\rho_\tau$ stays uniformly bounded away from zero (see Proposition~\ref{prop:JKO_maximum_principle}), the dissipation of the functional $\mathcal{F}_2[\rho]$ yields an
$L^{2}(0,T;L^{2}(\Omega))$ bound on
\[
D^{2}\bigl(\log\rho_\tau - u[\rho_\tau]\bigr).
\]
Indeed, the boundary term appearing in the dissipation has the good sign, while the remaining terms can be controlled using the $L^{p}(0,T;W^{1,p}(\Omega))$ bounds on $\rho_\tau$.

As a consequence, we obtain an $L^{2}(0,T;L^{2}(\Omega))$ bound on $D^{2}\log\rho_\tau$, since the term $D^{2}u[\rho_\tau]$ can be estimated by Calderón--Zygmund theory combined with Theorem~\ref{lem:Linfty_JKO}.} Finally, we note that
\[
D^{2}\log\rho_\tau
= \frac{1}{\rho_\tau} D^{2}\rho_\tau
- \frac{1}{\rho_\tau^{2}} \nabla\rho_\tau \otimes \nabla\rho_\tau,
\]
which allows us to conclude the desired $L^{2}(0,T;H^{2}(\Omega))$ estimate.
With Theorem~\ref{lem:Linfty_JKO} and Lemma~\ref{prop:JKO_FP}, Lemma~\ref{lem:rho_W1p} we conclude that $D^{2}\rho_{\tau}$ is bounded in $L^{2}(0,T; L^{2}(\Omega))$ uniformly in $\tau$. To prove the strong convergence in $L^{p}(0,T; W^{1,p}(\Omega))$ for all $1\le p<+\infty$, it is sufficient to prove that we have an $L^{2}(0,T; H^{1}(\Omega))$ strong convergence. The $L^{p}(0,T; W^{1,p}(\Omega))$ strong convergence follows then from the $L^{\infty}(0,T; W^{1,p}(\Omega))$ estimate from Proposition~\ref{prop:JKO_FP}. But the $L^{2}(0,T; H^{1}(\Omega))$ strong convergence follows from~\cite[Proposition 4.1]{toshpulatov}. The only difference is that here $V=-u[\rho^{n+1}_{\tau}]$ but for $q>d$ we can estimate $\text{Lip}(V)$ by
$$
\|\nabla u[\rho^{n+1}_{\tau}]\|_{L^{\infty}}\le C\|\nabla u[\rho^{n+1}_{\tau}]\|_{W^{1,q}} \le C\|\rho^{n+1}_{\tau}\|_{L^{q}}\le C\|\rho^{n+1}_{\tau}\|_{L^{\infty}}\le C. 
$$
where $C$ is independent of $n$ by Proposition~\ref{prop:JKO_maximum_principle}.
\end{proof}

We are now prepared to prove Theorem~\ref{thm:JKO}.

\begin{proof}[Proof of Theorem~\ref{thm:JKO}]
{\color{blue}We recall that $\rho$ is the weak solution of the Keller-Segel equation~\eqref{eq:KS}, the argument is classical as the penalization never activates. The estimates follow from Proposition~\ref{prop:JKO_FP} and Proposition~\ref{prop:first_convergence_JKO}. It remains to prove the $L^{2}(0,T; H^{2}(\Omega))$ strong convergence. Following and adapting the argument from~\cite[Theorem 6.5]{toshpulatov} to the Keller-Segel system, we have that up to a subsequence, $\rho_{\tau}$ converges strongly in $L^{p}(0,T; W^{1,p}(\Omega))$ for all $1\le p<+\infty$ and weakly in $L^{2}(0,T; H^{2}(\Omega))$ to some $\rho$; moreover $\log \rho_{\tau} -u[\rho_{\tau}]$ converges weakly to $\log \rho - u[\rho]$ weakly in $L^{2}(0,T; H^{2}(\Omega))$ and strongly in $L^{p}(0,T; W^{1,p}(\Omega))$. This uses the lower and upper bounds on $\rho_\tau$. Let us just mention than the convergence of $\rho_\tau$ implies the convergence $\log\rho_\tau - u[\rho_\tau]$ in general: $u[\rho_\tau]$ is a lower order, and $\log\rho_\tau$ is bounded and converges in the same space that $\rho_\tau$ as $\rho_\tau$ is uniformly bounded from below and above.}  By lower semi-continuity of the weak convergence and Proposition~\ref{prop:L^2H^2bound}:
\begin{align*}
 &\mathcal{F}_{2}[\rho_{0}] -  \mathcal{F}_{2}[\rho(T)]\ge \mathcal{F}_{2}[\rho_{0}] -  \liminf_{\tau\to 0}\mathcal{F}_{2}[\rho(T)] \ge 2\limsup_{\tau\to 0}\int_{0}^{T}\int_{\Omega} |D^{2}(\log \rho_{\tau}-u[\rho_{\tau}])|^2\rho_{\tau}\diff x \diff t \\
 &- 2\liminf_{\tau\to 0} \int_{0}^{T}\int_{\Omega}\rho_{\tau} \nabla(\log(\rho_{\tau} -u[\rho_{\tau}])\cdot D^{2}u[\rho_{\tau}]\nabla(\log(\rho_{\tau})-u[\rho_{\tau}])\diff x\diff t\\
 & - 2\liminf_{\tau\to 0} \int_{0}^{T}\int_{\Omega}\rho_{\tau}\nabla\left(\log(\rho_{\tau})-u[\rho_{\tau}]\right)\cdot \nabla u\left[\f{\rho_{\tau}(t)-\rho_{\tau}(t-\tau)}{\tau}\right]\diff x\diff t\\
 & +2\limsup_{\tau\to 0}\int_{0}^{T}\int_{\p\Omega}\rho \f{(\nabla \log \rho_{\tau}-u[\rho_{\tau}])\cdot D^{2} h \nabla (\log \rho_{\tau}-u[\rho_{\tau}])}{|\nabla h|}\diff \mathcal{H}^{d-1}\diff t + \eps(\tau). 
\end{align*}
With the convergence above we obtain 
\begin{align*}
&2\liminf_{\tau\to 0} \int_{0}^{T}\int_{\Omega}\rho_{\tau}  \nabla(\log(\rho_{\tau}) -u[\rho_{\tau}])\cdot D^{2}u[\rho_{\tau}]\nabla(\log(\rho_{\tau})-u[\rho_{\tau}]) \diff x\\
=&2 \int_{0}^{T}\int_{\Omega}\rho  \nabla(\log(\rho) -u[\rho]\cdot D^{2}u[\rho]\nabla(\log(\rho)-u[\rho])\diff x.
\end{align*}

Now we want to prove that 

\begin{align*}
&2\liminf_{\tau\to 0} \int_{0}^{T}\int_{\Omega}\rho_{\tau}\nabla\left(\log(\rho_{\tau})-u[\rho_{\tau}]\right)\cdot \nabla u\left[ \f{\rho_{\tau}(t)-\rho_{\tau}(t-\tau)}{\tau}\right]\diff x\diff t\\
=&\int_{0}^{T}\int_{\Omega}\rho\nabla(\log(\rho)-u[\rho_{\tau}])\cdot\nabla u[\p_t\rho]\diff t \diff x. 
\end{align*}

We already know that $\rho_{\tau}\to \rho$ in every $L^{p}(0,T; L^{p}(\Omega))$, $1\le p<+\infty$ by strong convergence of $\rho_{\tau}$ in $L^{2}(0,T; H^{1}(\Omega))$ and boundedness in $L^{\infty}((0,T)\times\Omega)$. Therefore it is sufficient to prove that $\nabla (\log\rho_{\tau} -u[\rho_{\tau}])\cdot\nabla u\left[ \f{\rho_{\tau}(t)-\rho_{\tau}(t-\tau)}{\tau}\right]$ converges weakly to $\nabla (\log \rho + u[\rho]) \cdot \nabla u[\p_{t}\rho] $ in $L^{2}(0,T; L^{2}(\Omega))$. 
But for $p>d$ with the embedding $W^{1,p}\hookrightarrow L^{\infty} $ and Lemma~\ref{prop:JKO_FP}
\begin{align*}
&\left\|\nabla (\log\rho_{\tau} -u[\rho_{\tau}])\cdot\nabla u\left[ \f{\rho_{\tau}(t)-\rho_{\tau}(t-\tau)}{\tau}\right]\right\|_{L^{2}(0,T; L^{2}(\Omega))} \\
&\le \|\nabla (\log\rho_{\tau} -u[\rho_{\tau}])\|_{L^{\infty}(0,T; W^{1,p}(\Omega))}
 \left\|\nabla u\left[\f{\rho_{\tau}(t)-\rho_{\tau}(t-\tau)}{\tau}\right]\right\|_{L^{2}(0,T; L^{2}(\Omega))}\\
 & \le C\left\|\f{\rho_{\tau}(t)-\rho_{\tau}(t-\tau)}{\tau}\right\|_{L^{2}(0,T; H^{-1}(\Omega))}.
\end{align*}
To estimate the quotients we recall that by the definition of the interpolation it is sufficient to estimate 
$$
\f{\rho^{n+1}_{\tau}-\rho^{n}_{\tau}}{\tau}
$$
in $H^{-1}(\Omega)$ where $\rho^{n}_{\tau}$ is the JKO sequence. But this follows from Lemma~\ref{lem:sobone} and Lemma~\ref{lem:sobone2} and Proposition~\ref{prop:JKO_FP}. Therefore the term $\nabla (\log\rho_{\tau} -u[\rho_{\tau}])\cdot \nabla u\left[\f{\rho_{\tau}(t)-\rho_{\tau}(t-\tau)}{\tau}\right]$ converges weakly in $L^{2}(0,T; L^{2}(\Omega))$. To identify its limit it is sufficient to do it against a smooth test function and using the strong convergence of $\nabla (\log \rho_\tau -u[\rho_{\tau}])$ in $L^{2}(0,T; L^{2}(\Omega))$. Therefore
\begin{align*}
&2\liminf_{\tau\to 0} \int_{0}^{T}\int_{\Omega}\rho_{\tau}\nabla(\log(\rho_{\tau})-u[\rho_{\tau}])\nabla u\left[\f{\rho_{\tau}(t)-\rho_{\tau}(t-\tau)}{\tau}\right]\\
=&\int_{0}^{T}\int_{\Omega}\rho(x)\nabla(\log(\rho)-u[\rho])\nabla u[\p_{t}\rho]. 
\end{align*}

Finally by weak convergence of $\log\rho_{\tau} -u[\rho_{\tau}]$ in $L^{2}(0,T; H^{2}(\Omega))$, we have weak convergence of $\nabla (\log \rho_{\tau} -u[\rho_{\tau}])$ in $L^{2}(0,T; L^{2}(\p\Omega))$. Therefore

\begin{align*}
&\limsup_{\tau\to 0}2\int_{\p\Omega}\rho_{\tau} \f{(\nabla (\log \rho_{\tau}-u[\rho_{\tau}]))^{T} D^{2} h \nabla (\log \rho_{\tau}-u[\rho_{\tau}])}{|\nabla h|}\diff \mathcal{H}^{d-1} \\
&\ge 2\int_{\p\Omega}\rho \f{(\nabla (\log \rho-u[\rho]))^{T} D^{2} h \nabla (\log \rho-u[\rho])}{|\nabla h|}\diff \mathcal{H}^{d-1}
\end{align*}

In the end together with Proposition~\ref{prop:entropy_equality} with $p=2$ we deduce
$$
\int_{0}^{T}\int_{\Omega} |D^{2}(\log \rho-u[\rho])|^2\rho\diff x \diff t \ge \limsup_{\tau\to 0}\int_{0}^{T}\int_{\Omega} |D^{2}(\log \rho_{\tau}-u[\rho_{\tau}])|^2\rho_{\tau}\diff x \diff t.  
$$
Combining it with the weak lower semi continuity of the norm and the weak convergence in $L^{2}(0,T; H^{2}(\Omega))$ of $\log \rho_{\tau} -u[\rho_{\tau}]$
$$
 \lim_{\tau\to 0}\int_{0}^{T}\int_{\Omega} |D^{2}(\log \rho_{\tau} -u[\rho_{\tau}])|^2\rho_{\tau}\diff x \diff t =\int_{0}^{T}\int_{\Omega} |D^{2}(\log \rho-u[\rho])|^2\rho\diff x \diff t.
$$
Since $\rho_{\tau}$ is bounded from below we deduce that $D^{2}(\log\rho_{\tau}-u[\rho_{\tau}])$ converges weakly in $L^{2}(0,T; H^{2}(\Omega))$ to $D^{2}(\log \rho -u[\rho])$. $\sqrt{\rho_{\tau}}$ converges strongly in all $L^{p}(0,T; L^{p}(\Omega))$ by convergence a.e. and boundedness in $L^{\infty}((0,T)\times\Omega)$. Therefore $\sqrt{\rho_{\tau}}D^{2}(\log\rho_{\tau} -u[\rho_{\tau}])$ converges weakly in $L^{2}(0,T; L^{2}(\Omega))$ to $\sqrt{\rho_{\tau}}D^{2}(\log\rho -u[\rho])$.  Thus we can deduce to the strong convergence in $L^{2}(0,T; H^{2}(\Omega))$ by weak convergence and convergence of the norms. Multiplying by $\rho_{\tau}^{-1/2}$ and substracting $D^{2}u[\rho_{\tau}]$ (which converges strongly since $\rho_{\tau}$ converges strongly in $L^{2}(0,T ;H^{1}(\Omega))$ we deduce the strong convergence of $\log \rho_{\tau}$ and therefore of $\rho_{\tau}$ in $L^{2}(0,T; H^{2}(\Omega)).$ 
\end{proof}

\section*{Acknowledgments}
This project was supported by the European Union via the ERC
AdG 101054420 EYAWKAJKOS project.

\bibliographystyle{siam}
\bibliography{biblio}

\end{document}